\newcommand{\noun}[1]{\textsc{#1}}
\providecommand{\tabularnewline}{\\}
\providecommand{\algorithmname}{Algorithm}
\numberwithin{equation}{section}
\theoremstyle{plain}
\newtheorem{thm}{\protect\theoremname}[section]
\theoremstyle{plain}
\newtheorem{assumption}[thm]{\protect\assumptionname}
\theoremstyle{definition}
\newtheorem{defn}[thm]{\protect\definitionname}
\theoremstyle{definition}
\newtheorem{example}[thm]{\protect\examplename}
\theoremstyle{plain}
\newtheorem{prop}[thm]{\protect\propositionname}
\theoremstyle{plain}
\newtheorem{lem}[thm]{\protect\lemmaname}
\theoremstyle{remark}
\newtheorem{rem}[thm]{\protect\remarkname}
\theoremstyle{plain}
\newtheorem{fact}[thm]{\protect\factname}
\def\BibTeX{{\rm B\kern-.05em{\sc i\kern-.025em b}\kern-.08em
    T\kern-.1667em\lower.7ex\hbox{E}\kern-.125emX}}
\providecommand{\assumptionname}{Assumption}
\providecommand{\definitionname}{Definition}
\providecommand{\examplename}{Example}
\providecommand{\factname}{Fact}
\providecommand{\lemmaname}{Lemma}
\providecommand{\propositionname}{Proposition}
\providecommand{\remarkname}{Remark}
\providecommand{\theoremname}{Theorem}
\begin{document}
\title{Stochastic approximation for risk-aware Markov decision processes}
\author{Wenjie Huang \thanks{Wenjie Huang (wenjiehuang@cuhk.edu.cn) is an International Postdoctoral Fellow at Shenzhen Research Institute of Big Data (SRIBD) and Institute for Data and Decison Analytics, The Chinese University of Hong Kong, Shenzhen. His research was supported by SRIBD International Postdoctoral Fellowship, and the National Research Foundation (NRF), Prime Ministers Office, Singapore under its Campus for Research Excellence and Technological Enterprise (CREATE) program.} \and William B. Haskell \thanks{William B. Haskell (whaskell@purdue.edu) is an Assistant Professor in Krannert School of Management at Purdue University. His research was supported by Singapore Ministry of Education Grant R-266-000-083-133 and by Singapore Ministry of Education Tier II Grant MOE2015-T2-2-148.}}
\maketitle
\begin{abstract}
We develop a stochastic approximation-type algorithm
to solve finite state/action, infinite-horizon, risk-aware Markov
decision processes. Our algorithm has two loops. The inner loop computes
the risk by solving a stochastic saddle-point problem. The outer loop performs
$Q$-learning to compute an optimal risk-aware policy. Several widely investigated risk measures (e.g. conditional value-at-risk,
optimized certainty equivalent, and absolute semi-deviation) are covered by our algorithm. Almost sure convergence and the convergence rate of the algorithm are established.
For an error tolerance $\epsilon>0$ for the optimal $Q$-value estimation gap and learning rate $k\in(1/2,\,1]$, the overall
convergence rate of our algorithm is $\Omega((\ln(1/\delta\epsilon)/\epsilon^{2})^{1/k}+(\ln(1/\epsilon))^{1/(1-k)})$
with probability at least $1-\delta$\emph{.}\\
\emph{}\\
\emph{Keywords:} Markov decision processes; Risk measure; Saddle-point;
Stochastic approximation; $Q$-learning.
\end{abstract}

\section{Introduction }

The analysis of complex systems such as inventory control, financial
markets, waste-to-energy plants, and computer networks is difficult
because of the intrinsic uncertainty in these systems. Risk-aware
optimization offers a possible remedy by searching for strong reliability
guarantees. In particular, it gives more attention to low probability
but high cost events than a risk-neutral optimizer would. Risk awareness
is especially important in sequential decision-making.

Markov decision processes (MDPs) introduced by Bellman in \cite{Bellman1957}
provide a mathematical framework for sequential decision-making. However,
the exact model of the underlying MDP is often unknown and one can
only observe the trajectory of states, actions, and costs. $Q$-learning,
as developed in \cite{Watkins1992}, can produce an optimal policy
in a model-free way based only on observed trajectories.

In this paper, we synthesize the work on risk-aware optimization with
reinforcement learning, specifically, $Q$-learning. As our main contribution,
we develop a novel asynchronous stochastic-approximation type algorithm
to solve infinite-horizon risk-aware MDPs. This algorithm can compute
the risk-aware optimal policy based only on observations, without
any knowledge of the explicit form of the cost function or the transition
probabilities. 

\subsection{Literature review}

\subsubsection{Risk measures}

In general, a risk measure is a mapping from random variables to scalars.
It can be interpreted as the amount of an asset (traditionally currency)
to be kept in reserve to make the risk acceptable. The literature
emphasizes convex and coherent risk measures. In \cite{Ruszczynski2006},
a theory of convex analysis is developed for optimization of convex
risk measures. Several specific examples of convex and coherent risk
measures along with their various risk envelopes are given in \cite[Chapter 6]{Shapiro2009}.

Special attention has been given to the class of coherent and law
invariant risk measures, the most well known being conditional value-at-risk
(CVaR). Optimization of CVaR is studied in \cite{Rockafellar2000},
which reveals that CVaR has many desirable properties for stochastic
optimization. The most famous representation result for law-invariant
coherent risk measures is the Kusuoka representation (see \cite{Shapiro2013}
for example) which shows that such risk measures can be 'built' out
of CVaR. There are several other important classes of risk measures
such as: optimized certainty equivalent \cite{Ben-Tal2007}, spectral
risk measures \cite{Acerbi2002}, distortion risk measures \cite{Bertsimas2009},
and entropic risk measures \cite{Foellmer2011}.

Numerical methods for risk-aware optimization are critical for practical
application. In \cite{Krokhmal2002}, CVaR-constrained optimization
problems are solved with a combination of discretization, linearization,
and sample average approximation. For multistage CVaR optimization,
\cite{Philpott2013} uses the dual representation of general coherent
risk measures to develop sampling-based algorithms. In \cite{Bardou2009},
stochastic approximation is used to estimate CVaR in data-driven optimization.
In addition, in \cite{Carbonetto2009}, stochastic interior-point
algorithms are developed for risk-constrained optimization.

\subsubsection{Risk-aware MDPs}

Risk-aware MDPs have been widely studied. In \cite{Baeuerle2011},
the authors minimize the conditional value-at-risk of the discounted
cost over both the finite and infinite horizon. In the follow-up work
\cite{Baeuerle2013}, the authors minimize a certainty equivalent
of the total cost for both finite and infinite horizon problems. Dynamic
programming methods are developed in both \cite{Baeuerle2011} and
\cite{Baeuerle2013}. A CVaR-constrained MDP is solved with both offline
and online iterative algorithms in \cite{Borkar2014}. In \cite{Chow2018},
both risk and modeling errors are taken into account in an MDP framework
for risk-sensitive and robust decision-making, and an approximate
value-iteration type algorithm is presented.

In \cite{Haskell_Markov_2012}, the authors study stochastic dominance-constrained
MDPs, and show that this class of MDPs can be reformulated as linear
programming problems using the convex analytic approach. In \cite{Haskell2015a},
the authors develop the convex analytic approach for a general family
of risk-aware MDPs. 

Dynamic programming equations are developed for a wide class of risk-aware
MDPs in \cite{Ruszczynski2010}, and corresponding value iteration
and policy iteration algorithms are developed. The family of risk
measures studied in this work are often called ``dynamic risk measures''
or ``Markov risk measures'', and are notable for satisfying the
property of time-consistency. In \cite{Shen2013}, the theory of risk-sensitive
MDPs is developed based on iterative risk measures which only depend
on the current state, rather than on the whole history. 

In \cite{Chow2018}, reinforcement learning algorithms for percentile
risk-constrained MDPs are proposed. In \cite{Prashanth2014,Tamar2014},
policy gradient algorithms are applied to MDPs with CVaR appearing
in either the objective or constraints. In \cite{Jiang2017}, a specific
class of risk measures called quantile-based risk measures is proposed
for MDPs and a simulation-based approximate dynamic programming (ADP)
algorithm is developed for the resulting problem. This paper emphasizes
importance sampling, to direct samples toward the \emph{risky region}
as the ADP algorithm progresses. In \cite{Shen2014}, a risk-sensitive
reinforcement learning algorithm based on utility functions is investigated.
A similar technique is applied to the risk-sensitive control of finite
MDPs in \cite{Borkar2002}. 

\subsubsection{Stochastic approximation and $Q$-learning}

$Q$-learning is introduced in \cite{Watkins1992}. The idea of $Q$-learning
is to use the observed transitions and costs to compute the optimal
policy (so that exact knowledge of the underlying MDP model is not
needed). In \cite{Tsitsiklis1994}, a thorough convergence proof of
the $Q$-learning algorithm is given based on stochastic approximation
and the theory of parallel asynchronous algorithms (see \cite{Borkar2008}
for more details on the theory of stochastic approximation). $Q$-learning
has wide applications in the areas of robotics and operations management,
and has also recently been applied to stochastic games
\cite{Hu2003}. Stochastic approximation has also been applied to
solve static stochastic optimization problems. In \cite{Nemirovski2005,Nemirovski2009},
efficient and robust stochastic approximation algorithms are developed
to solve saddle-point problem and optimize non-smooth functions.

\subsection{Contributions}

As our main contribution, we develop a stochastic approximation-type
algorithm for infinite-horizon risk-aware MDPs that covers a wide
range of risk measures. This algorithm is model-free and
it can compute the risk-aware optimal policy based only on observations.
We make the following three specific contributions:
\begin{enumerate}
\item \textbf{Generality} \textbf{of risk measure}s: There exists literature
(e.g. \cite{Borkar2002,Shen2014}) studying reinforcement learning
for risk-sensitive MDPs. The ``risk-sensitive'' objective in \cite{Borkar2002}
specifically refers to the expectation of the exponential function
of cumulative costs. In \cite{Shen2014}, the ``risk-sensitive''
objective is essentially utility-based shortfall, and in \cite{Baeuerle2013},
``risk-sensitive'' refers to utility-based certainty equivalent.
To the best of our knowledge, our present paper adds to the literature
by incorporating saddle-point risk measures. In \cite{Jiang2017}, only
quantile-based risk measures are included. In \cite{Baeuerle2011,Tamar2014},
the algorithms are specific to CVaR.
\item \textbf{Model-free asynchronous algorithm}: There exist several dynamic
programming based algorithms for solving risk-aware MDPs (see\cite{Baeuerle2011,Ruszczynski2010,Shen2013,Jiang2017,Tamar2014}),
but they all rely on some information about the underlying transitions
or cost function. Our novel stochastic approximation algorithm is
completely model-free. Our algorithm is also asynchronous, which means
that the $Q$-value is only updated when the corresponding state-action
pair is explored. This algorithm works even when no prior information
on the underlying MDP is available.
\item \textbf{Explicit sample complexity results}: We give a detailed convergence
rate analysis of our algorithm for both polynomial and linear learning
rates. We also show numerically that the convergence rate of our algorithm
is close to that of standard $Q$-learning. In \cite{Borkar2002,Jiang2017},
the almost sure convergence of the proposed algorithms is demonstrated,
but the explicit convergence rates are not derived.
\end{enumerate}
This paper is organized as follows. Section 2 reviews preliminaries
on risk measures and risk-aware MDPs. Section 3 then introduces saddle-point
risk measures and shows by example that many widely investigated risk
measures fall within this framework. Section 4 presents the details
of our algorithm as well as its almost sure convergence and convergence
rate. Section 5 contains the proofs of all our main theorems. We report
numerical experiments in Section 6 and then conclude the paper in
Section 7.

\section{Preliminaries}

This section introduces preliminary concepts and notations (listed
in Table 1). 
\begin{center}
\begin{table}
\caption{List of Key Notation }

\centering{}%
\begin{tabular}{cc}
\hline 
Notations & Definitions\tabularnewline
\hline 
$N,\,n$ & Outer iterations\tabularnewline
$T,\,t$ & Inner iterations\tabularnewline
$G$ & Objective function for saddle-point risk measure\tabularnewline
$\text{\ensuremath{\mathbb{S}}},\,\mathbb{A}$ & State and action space \tabularnewline
$\mathcal{T},\,\mathcal{T}_{G}$ & Risk-aware Bellman operator \tabularnewline
$\mathcal{R}_{(s,a)}^{G}$ & Risk measure with respect to state-action pair $(s,\,a)$ and function $G$\tabularnewline
$\mathcal{Y},\,\mathcal{Z}$ & Compact sets\tabularnewline
$K_{\mathcal{Y}},\,K_{\mathcal{Z}}$ & Euclidean diameters of $\mathcal{Y}$ and $\mathcal{Z}$.\tabularnewline
$L$ & Bounds for the subgradients\tabularnewline
$K_{G}$ & Constant of Lipschitz continuity for function $G$\tabularnewline
$K_{S}$ & Stability modulus of saddle-point\tabularnewline
$\mathcal{G}_{t}^{n}$ & The history of RaQL for $t\leq T$ and $n\leq N$\tabularnewline
$\epsilon_{t}^{n}$ & Risk estimation error, for $t\leq T$ and $n\leq N$\tabularnewline
$\xi_{t}^{n}$ & Approximation error, for $t\leq T$ and $n\leq N$\tabularnewline
$\tau$ & The iteration w.r.t sequence $D$\tabularnewline
$\tau_{m}$ & The iteration when the approximation error of $Q$-value is bounded
by $D_{m}$\tabularnewline
$D_{m}$ & A constructed sequence $D$ with time horizon $m$\tabularnewline
$Z_{t}^{n+1,\tau}$, $Y_{t}^{n+1,\tau}$ & Two random processes decomposed from $\left\{ Q_{t}^{n}\right\} $\tabularnewline
$\beta_{T}$ & Discount factor of sequence $D_{m}$\tabularnewline
$e$ & Natural logarithm\tabularnewline
$\|\cdot\|_{2}$ & $L_{2}$-norm\tabularnewline
$\|\cdot\|_{\infty}$ & Infinite norm\tabularnewline
\hline 
\end{tabular}
\end{table}
\par\end{center}

\subsection{Risk measures}

We begin with a probability space $(\Omega,\mathcal{F},\,P)$, where
$\Omega$ is a sample space, $\mathcal{F}$ is a $\sigma-$algebra
on $\Omega$, and $P$ is a probability measure on $(\Omega,\mathcal{\,F})$.
We work in $\mathcal{L}=L_{\infty}(\Omega,\mathcal{\,F},\,P)$, the
space of essentially bounded $\mathcal{F}-$measurable mappings.
For $X,\,Y\in\mathcal{L},$ $Y\succeq X$ means that $Y\text{(\ensuremath{\omega})\ensuremath{\geq}\ensuremath{X(\ensuremath{\omega})}}$
for all $\omega\in\Omega$. 

We define a risk measure\emph{ }to be a function $\rho:\,\mathcal{L}\rightarrow\mathbb{R}$,
which assigns to a random variable $X\in\mathcal{L}$ a real scalar
value $\rho(X)$. The following are four key properties of risk measures:

(A1) Monotonicity: If $X\succeq Y$, then $\rho(X)\geq\rho(Y)$.

(A2) Translation Invariance: $\rho(X+r)=\rho(X)+r$ for $r\in\mathbb{R}$.

(A3) Convexity: $\rho(\lambda X+(1-\lambda)Y)\leq\lambda\rho(X)+(1-\lambda)\rho(Y)$
for $0\leq\lambda\leq1.$

(A4) Positive Homogeneity: $\rho(\alpha X)=\alpha\rho(X)$ for $\alpha\geq0$.\\
These conditions were introduced in the pioneering paper \cite{Artzner1999}
and have since been heavily justified in other work including \cite{Ruszczynski2006,Natarajan2009,Bertsimas2009}.
Property (A1) states that a random variable with greater cost almost
surely must have higher risk. (A2) states that the addition of a certain
cost increases the risk by the same amount. (A3) gives precise meaning
to the idea that diversification should not increase risk. (A4) states
that the risk of a position is proportional to its size (i.e., if
we double our cost then we double our risk). A risk measure satisfying properties (A1)-(A3) is called a convex
risk measure, and a risk measure satisfying properties (A1)-(A4) is
called a coherent risk measure.

\subsection{Risk-aware MDPs}

A MDP is given by the tuple $\left(\mathbb{S},\,\mathbb{A},\,P,\,c\right)$
where $\mathbb{S}$ and $\mathbb{A}$ are the state and action spaces
and $\mathbb{K}:=\left\{ (s,\,a)\in\mathbb{S}\times\mathbb{A}\right\} $
is the set of all state-action pairs. Let $\mathcal{P}(\mathbb{S})$
be the space of probability measures over $\mathbb{S}$, and define
$\mathcal{P}(\mathbb{A})$ similarly. The transition law $P$ governs
the system evolution where $P(\cdot|s,\,a)\in\mathcal{P}(\mathbb{S})$
for all $(s,\,a)\in\mathbb{K}$, i.e., $P(s^{\prime}|s,\,a)$ for
$s^{\prime}\in\mathbb{S}$ is the probability of next visiting state
$s^{\prime}$ given the current state-action pair $(s,\,a)$. The
cost function $c:\,\mathcal{\mathbb{K}}\rightarrow\mathbb{R}$ gives
the cost of each state-action pair. Finally, $\gamma\in\left(0,\,1\right)$
is the discount factor. Let $\phi:\,\mathbb{S}\rightarrow\mathcal{P}(\mathbb{A})$ be a randomized policy.
For a given $\phi$, we obtain a stochastic process $\{(s_{t},\,a_{t})\}_{t\geq0}$
where $s_{t}$ and $a_{t}$ are the state and action at stage $t$,
respectively.

We make the following assumptions.
\begin{assumption}
\label{Assumption 2.1} (i) $\mathbb{S}$ and $\mathbb{A}$ are finite.

(ii) $0\leq c(s,\,a)\leq C_{\max}$ for all $(s,\,a)\in\mathbb{K}$.
Set $V_{\max}:=C_{\max}/(1-\gamma)$. 
\end{assumption}

Many real life MDPs satisfy Assumption \ref{Assumption 2.1}(i), including
machine replacement and sequential online auctions \cite{Hu2007},
critical infrastructure protection \cite{Ott2010}, wireless sensor
networks \cite{Alsheikh2015}, and human-robot interaction systems
\cite{Krsmanovic2006,Keizer2013}. 

In \cite{Ruszczynski2010}, the modern theory of risk measures is
adapted to MDPs. This class of risk-aware MDPs is constructed in the
following way. Denote our sequence of costs as $X_{t}=c(s_{t},\,a_{t})$
for all $t\geq0$. We begin by formalizing some details about the
risk of finite cost sequences $X_{t,\,T}:=\left(X_{t},\,X_{t+1},\ldots,\,X_{T}\right)$
before we consider the risk of the infinite cost sequence $X_{0},\,X_{1},\ldots$
actually faced by the controller. Let $\text{\ensuremath{\mathcal{L}_{t}:=\mathcal{L}_{\infty}(\Omega,\mathcal{\,F}_{t},\,P)}}$
and $\mathcal{L}_{t,\,T}:=\mathcal{L}_{t}\times\mathcal{L}_{t+1}\times\cdots\times\mathcal{L}_{T}$
for all $0\leq t\leq T<\infty$. 
\begin{defn}
\label{Definition 2.1-2}\cite[Definition 1]{Ruszczynski2010} For fixed $T\geq 1$ and $0\leq t\leq T$, (i)
A mapping $\rho_{t,\,T}:\,L_{t,\,T}\rightarrow L_{t}$, is called
a \emph{conditional risk measure} if: $\rho_{t,\,T}(Z_{t,\,T})\leq\rho_{t,\,T}(X_{t,\,T})$
for all $Z_{t,\,T},\,X_{t,\,T}\in\mathcal{L}_{t,\,T}$ such that $Z_{t,\,T}\leq X_{t,\,T}$. 

(ii) A \emph{dynamic risk measure} is a sequence of conditional risk
measures $\{\rho_{t,\,T}\}_{t=0}^{T}$.
\end{defn}

We now make our key assumptions about dynamic risk measures.
\begin{assumption}
\label{Assumption 2.5} For fixed $T\geq1$ and $0\leq t\leq T$, suppose the dynamic risk measure $\left\{ \rho_{t,\,T}\right\} _{t=0}^{T}$
satisfies the following conditions:

(i) (Normalization) $\rho_{t,\,T}(0,\,0,...,0)=0.$

(ii) (Conditional translation invariance) For any $X_{t,\,T}\in\mathcal{L}_{t,\,T}$,
\[
\rho_{t,\,T}(X_{t},\,X_{t+1},...,X_{T})=X_{t}+\rho_{t,\,T}(0,\,X_{t+1},...,X_{T}).
\]

(iii) (Convexity) For any $X_{t,\,T},\,Y_{t,\,T}\in\mathcal{L}_{t,\,T}$
and $0\leq\lambda\leq1$, $\rho_{t,\,T}(\lambda\,X_{t,\,T}+(1-\lambda)Y_{t,\,T})\leq\lambda\,\rho_{t,\,T}(X_{t,\,T})+(1-\lambda)\rho_{t,\,T}(Y_{t,\,T})$.

(iv) (Positive homogeneity) For any $X_{t,\,T}\in\mathcal{L}_{t,\,T}$
and $\alpha\geq0$, $\rho_{t,\,T}(\alpha\,X_{t,\,T})=\alpha\,\rho_{t,\,T}(X_{t,\,T}).$

(v) (Time-consistency) For any $X_{t,\,T},\,Y_{t,\,T}\in\mathcal{L}_{t,\,T}$
and $0\leq\tau\leq\theta\leq T$, the conditions $X_{k}=Y_{k}$ for
$k=\tau,...,\theta-1$ and $\rho_{\theta,\,T}(X_{\theta},....,X_{T})\leq\rho_{\theta,\,T}(Y_{\theta},...,Y_{T})$
imply $\rho_{\tau,\,T}(X_{\tau},...,X_{T})\leq\rho_{\tau,\,T}(Y_{\tau},...,Y_{T})$.
\end{assumption}

Many of these properties (monotonicity, convexity, positive homogeneity,
and translation invariance) were originally introduced for static
risk measures as properties (A1)-(A4). The next theorem gives a recursive
formulation for dynamic risk measures satisfying Assumption \ref{Assumption 2.5}.
This representation is the foundation of \cite{Ruszczynski2010} and
subsequent work on time-consistent dynamic risk measures. To express this result,
we define a mapping $\rho_{t}\text{ : }\mathcal{L}_{t+1}\rightarrow\mathcal{L}_{t}$
for $t\geq0$ to be a one-step (conditional) risk measure if $\rho_{t}(X_{t+1})=\rho_{t,\,t+1}(0,\,X_{t+1})$. 
\begin{thm}
\label{Theorem 2.3}\cite[Theorem 1]{Ruszczynski2010} Suppose Assumption
\ref{Assumption 2.5} holds, then
\begin{equation}
\rho_{t,\,T}(X_{t},\,X_{t+1},...,\,X_{T})=X_{t}+\rho_{t}(X_{t+1}+\rho_{t+1}(X_{t+2}+\cdot\cdot\cdot+\rho_{T}(X_{T}))),\label{Recursive}
\end{equation}
for all $0\leq t\leq T$, where $\rho_{t},\ldots,\,\rho_{T}$ are
one-step risk measures.
\end{thm}

Now we consider the risk of an infinite cost sequence. Following \cite{Ruszczynski2010},
the \emph{discounted} measure of risk $\rho_{t,\,T}^{\gamma}\text{ : }\mathcal{L}_{t,\,T}\rightarrow\mathbb{R}$
is defined via
\[
\rho_{t,\,T}^{\gamma}\left(X_{t},\,X_{t+1},\ldots,\,X_{T}\right):=\rho_{t,\,T}\left(\gamma^{t}X_{t},\,\gamma^{t+1}X_{t+1},\ldots,\,\gamma^{T}X_{T}\right).
\]
Define $\mathcal{L}_{t,\,\infty}:=\mathcal{L}_{t}\times\mathcal{L}_{t+1}\times\cdot\cdot\cdot$
for $t\geq0$ and $\rho^{\gamma}\text{ : }\mathcal{L}_{0,\,\infty}\rightarrow\mathbb{R}$
via
\[
\rho^{\gamma}\left(X_{0},\,X_{1},\ldots\right):=\lim_{T\rightarrow\infty}\rho_{0,\,T}^{\gamma}\left(X_{0},\,X_{1},\ldots\right).
\]
To provide our final representation result, we introduce the additional
assumption that risk preferences are stationary (they only depend
on the sequence of costs ahead, and are independent of the current
time).
\begin{assumption}
\label{assu:stationary_preferences} (Stationary preferences) For
all $T\geq1$ and $s\geq0$,
\[
\rho_{0,\,T}^{\gamma}\left(X_{0},\,X_{1},\ldots,\,X_{T}\right)=\rho_{s,\,T+s}^{\gamma}\left(X_{0},\,X_{1},\ldots,\,X_{T}\right).
\]
\end{assumption}

When Assumptions \ref{Assumption 2.5} and \ref{assu:stationary_preferences}
are satisfied, the corresponding dynamic risk measure is given by
the recursion:
\begin{align}
\rho^{\gamma}(X_{0},\,X_{1},...,\,X_{T},\ldots)=X_{0}+\rho_{1}(\gamma X_{1}+\rho_{2}(\gamma^{2}X_{2}+\cdot\cdot\cdot+\rho_{T}(\gamma^{T}X_{T})+\cdot\cdot\cdot)),\label{Recursive_infinite}
\end{align}
where $\rho_{1},\,\rho_{2},\ldots$ are all one-step risk measures.
Based on representation (\ref{Recursive_infinite}), we may evaluate
the risk of a policy $\phi$ via
\begin{equation}
J(\phi,\,s_{0}):=\rho\left(c(s_{0},\,a_{0})+\gamma\cdot\rho\left(c(s_{1},\,a_{1})+\gamma\cdot\rho\left(c(s_{2},\,a_{2})+\cdot\cdot\cdot\right)\right)\right),\label{Risk MDP}
\end{equation}
where $s_{0}$ is the initial state. To clarify, the same one-step
risk measure $\rho$ appears at all times $t\geq0$ due to the property of stationarity.
Formulation (\ref{Risk MDP}) explicitly captures the risk with respect
to the cost associated with the current state-action pair, as well
as the future risk. Let ${\Pi}$ denote the class
of deterministic stationary policies $\pi$ which map
from states to actions, i.e., $\pi:\,\mathcal{\mathbb{S}}\rightarrow\mathcal{\mathbb{A}}$. From \cite[Theorem 4]{Ruszczynski2010}, it shows that there exists an optimal deterministic stationary policy that minimizes Eq. (\ref{Risk MDP}). The corresponding risk-aware MDP is
\begin{equation}
\min_{\pi\in\Pi}\,J(\pi,\,s_{0}).\label{Risk MDP 2}
\end{equation}

\section{Saddle-point risk measures}

This section introduces the saddle-point representation
of risk measures. We elaborate on two main reasons for choosing this
representation. First, many widely investigated risk measures can
be represented as stochastic saddle-point problems including: conditional
value-at-risk, optimized certainty equivalent, absolute semi-deviation,
and functionally coherent risk measures. Second, there are efficient
algorithms for solving stochastic saddle-point problems (see \cite{Nemirovski2005,Nemirovski2009})
and thus for computing the risk.

To proceed, we now assume that $\Omega$ is Borel measurable and $\mathcal{L}$
is the set of all $X$ with bounded support $[\eta_{\min},\,\eta_{\max}]$ and $\eta_{\min},\,\eta_{\max}$ satisfying $-\infty < \eta_{\min} < \eta_{\max} < \infty$ (i.e., $X(\omega)\in[\eta_{\min},\,\eta_{\max}]$ for all $\omega\in\Omega$).
Take $\mathcal{Y}\subset\mathbb{R}^{d_{1}}$ and $\mathcal{Z}\subset\mathbb{R}^{d_{2}}$
to be closed and convex sets and define $K_{\mathcal{Y}},\,K_{\mathcal{Z}}$
to be the Euclidean diameters of $\mathcal{Y}$ and $\mathcal{Z}$,
respectively. For a proper function $G:\,\mathcal{L}\times\mathcal{Y}\times\mathcal{Z}\rightarrow\mathbb{R}$,
we consider the risk measure:
\begin{equation}
\rho(X)=\max_{z\in\mathcal{Z}}\min_{y\in\mathcal{Y}}\mathbb{E}_{P}\left[G(X,\,y,\,z)\right].\label{Saddle-2}
\end{equation}
We define $\partial_{y}G(\cdot,\,y,\,z)$ and $\partial_{z}G(\cdot,\,y,\,z)$
to be the subdifferentials of $G$ for all $(y,\,z)\in\mathcal{Y}\times\mathcal{Z}$,
and we define $G_{y}(\cdot,\,y,\,z)\in\partial_{y}G(\cdot,\,y,\,z)$
and $G_{z}(\cdot,\,y,\,z)\in\partial_{z}G(\cdot,\,y,\,z)$ to be particular
subgradients with respect to $y$ and $z$. We make the following
assumptions on the function $G$.
\begin{assumption}
\label{Assumption 3.3}\cite[Assumption B]{Nemirovski2005} (i) $\omega\rightarrow G(X(\omega),\,y,\,z)$
is $P$-square summable for every $y\in\mathcal{Y}$ and $z\in\text{\ensuremath{\mathcal{Z}}}$,
i.e., $\int_{\Omega}\left|G(X(\omega),\,y,\,z)\right|^{2}P(d\omega)<\infty,$
for all $(y,\,z)\in\mathcal{Y}\times\mathcal{Z}$.

(ii) $G$ is Lipschitz continuous on $\mathcal{L}\times\mathcal{Y}\times\mathcal{Z}$
with constant $K_{G}>1$.

(iii) $y\rightarrow G(X,\,y,\,z)$ is convex and $z\rightarrow G(X,\,y,\,z)$
is concave for all $(X,\,y,\,z)\in\mathcal{L}\times\mathcal{Y}\times\mathcal{Z}$.

(iv) Any selection of subgradients $\omega\rightarrow G_{y}(X(\omega),\,y,\,z)$
and $\omega\rightarrow G_{z}(X(\omega),\,y,\,z)$ is Borel measurable.
The subgradients $G_{y}(X,\,y,\,z)$ and $G_{z}(X,\,y,\,z)$ are uniformly
bounded, i.e., there exists $L>0$ such that $\|G_{y}(X,\,y,\,z)\|_{2}\leq L$
and $\|G_{z}(X,\,y,\,z)\|_{2}\leq L$ for all $(X,\,y,\,z)\in\mathcal{L}\times\mathcal{Y}\times\mathcal{Z}$. 
\end{assumption}

Under the assumption that $G$ is proper on $\mathcal{L}\times\mathcal{Y}\times\mathcal{Z}$
and Assumption \ref{Assumption 3.3}(iii), we know that the subdifferentials
$\partial_{y}G(\cdot,\,y,\,z)$ and $\partial_{z}G(\cdot,\,y,\,z)$
are non-empty for all $(y,\,z)\in\mathcal{Y}\times\mathcal{Z}$ by
\cite[Theorem 23.4]{Rockafellar1970}. Based on \cite[Theorem 7.47]{Shapiro2009}
and \cite[Remark 18]{Shapiro2009}, under Assumption \ref{Assumption 3.3}(i),
the subdifferentials $\partial_{y}\mathbb{E}_{P}\left[G(X,\,y,\,z)\right]$
and $\partial_{z}\mathbb{E}_{P}\left[G(X,\,y,\,z)\right]$ are nonempty
and satisfy $\partial_{y}\mathbb{E}_{P}\left[G(X,\,y,\,z)\right]=\mathbb{E}_{P}\left[\partial_{y}G(\cdot,\,y,\,z)\right]$, and
$\partial_{z}\mathbb{E}_{P}\left[G(X,\,y,\,z)\right]=\mathbb{E}_{P}\left[\partial_{z}G(\cdot,\,y,\,z)\right]$,
for all $x$ and $y$ in the interior of $\mathcal{Y}$ and $\mathcal{Z}$,
respectively. Thus, the subgradients $G_{y}(X,\,y,\,z)$ and $G_{z}(X,\,y,\,z)$
satisfy $\mathbb{E}_{P}G_{y}(X,\,y,\,z)\in\partial_{y}\mathbb{E}_{P}\left[G(X,\,y,\,z)\right]$
and $\mathbb{E}_{P}G_{z}(X,\,y,\,z)\in\partial_{z}\mathbb{E}_{P}\left[G(X,\,y,\,z)\right]$.
Under Assumptions \ref{Assumption 3.3}(i) and (iv), we see that
$\|\mathbb{E}_{P}G_{y}(X,\,y,\,z)\|_{2}$ and $\|\mathbb{E}_{P}G_{z}(X,\,y,\,z)\|_{2}$
are both bounded by $L$.

The following Theorem \ref{Theorem 3.2-1} provides sufficient conditions
for the saddle-point risk measure (\ref{Saddle-2}) to be a convex
risk measure satisfying axioms (A1)-(A3). In particular, we can find
a special class of functions $\{h_{z}\}_{z\in\mathcal{Z}}$ and then
construct $G$ from these $\{h_{z}\}_{z\in\mathcal{Z}}$. The proof
of Theorem \ref{Theorem 3.2-1} may be found in the Appendix.
\begin{thm}
\label{Theorem 3.2-1} Set $\mathcal{Y}=[\eta_{\min},\,\eta_{\max}]$
and let $\{h_{z}\}_{z\in\mathcal{Z}}$ be a collection of functions
such that $h_{z}(X,\,y)$ (for $X\in\mathcal{L}$ and $y\in\mathcal{Y}$),
parameterized by $z\in\mathcal{Z}$ that satisfies:

(i) $\omega\rightarrow h_{z}(X(\omega),\,y)$ is $P$-square summable
for every $y\in\mathcal{Y}$ and $z\in\text{\ensuremath{\mathcal{Z}}}$.

(ii) $h_{z}(X,\,y)$ is convex in $y\in\mathcal{Y}$ and concave in
$z\in\mathcal{Z}$, for all $X\in\mathcal{L}$. 

(iii) Any selection of subgradients of $h_{z}(X,\,y)$ with respect
to $z\in\mathcal{Z}$ and $y\in\mathcal{Y}$ is Borel measurable and
uniformly bounded for all $X\in\mathcal{L}$.

(iv) $h_{z}$ is Lipschitz continuous on $\mathcal{L}\times\mathcal{Y}$
with constant $K_{G}-1$ for all $z\in\mathcal{Z}$.\\
Then,
\begin{equation}
G(X,\,y,\,z)=y+h_{z}(X,\,y),\,y\in\mathcal{Y},\,z\in\mathcal{Z},\label{Construction G}
\end{equation}
satisfies Assumption \ref{Assumption 3.3}. Further, formulation (\ref{Saddle-2})
with the choice of (\ref{Construction G}) is a convex risk measure
satisfying axioms (A1)-(A3).
\end{thm}

We now detail several applications of Theorem \ref{Theorem 3.2-1}.
\begin{example}
\textbf{\label{Example 3.3-1} Optimized certainty equivalent (OCE},
see \cite{Ben-Tal2007}). Define $\mathcal{Y}=[\eta_{\min},\,\eta_{\max}]$
($\mathcal{Z}$ is a singleton). First, we construct
CVaR by choosing:
\begin{equation}
h_{z}(X,\,y)=-\frac{1}{(1-\alpha)}\max\{X-y,\,0\},\,\alpha\in[0,\,1],\,\forall z\in\mathcal{Z}.\label{piecewise}
\end{equation}
We then obtain: 
\[
G(X,\,y,\,z)=y+(1-\alpha)^{-1}\mathbb{E}_{P}\left[\max\{X-y,\,0\}\right],\,\forall z\in\mathcal{Z}, 
\]
and
\begin{equation}
\textrm{CVaR}_{\alpha}(X):=\min_{\eta\in[\eta_{\min},\,\eta_{\max}]} \mathbb{E}[G(X,\,y,\,z)] = \min_{\eta\in[\eta_{\min},\,\eta_{\max}]}\mathbb{E}[y+h_{z}(X,\,y)] ,\,\forall z\in\mathcal{Z}. \label{CVaR-1}
\end{equation} 
We can generalize CVaR to OCE by substituting a general utility function
in place of (\ref{piecewise}). Define $u:\,\mathbb{R}\rightarrow[0,\,1]$
to be a concave utility function that is differentiable with bounded
derivative $\nabla u(\cdot)$. In this case, $h_{z}(X,\,y)=u(y-X)$, for all $z\in\mathcal{Z}$, and the function $G$ is:
\[
G(X,\,y,\,z)=y-\mathbb{E}_{P}[u(y-X)],\,\forall z\in\mathcal{Z}.
\]
\end{example}

\begin{example}
\textbf{\label{Example 3.3} Absolute semi-deviation} is a type of
mean-risk model. The absolute semi-deviation is $\rho_{AS}(X):=\mathbb{E}[X]+\iota\,\mathbb{E}\left[\left(X-\mathbb{E}([X]\right)_{+}\right]$
for the weight coefficient $\iota\in[0,\,1]$ (see \cite{Shapiro2009}).
Define $\mathcal{Y}=[\eta_{\min},\,\eta_{\max}]$, and $\mathcal{Z}=[0,\,1]$.
By \cite[Chapter 6.5.2]{Shapiro2009}, we have:
$h_{z}(X,\,y)=(1-\iota\,z)X+\iota\,(X-y)_{+}+\iota\,z-1$,
\[
G(X,\,y,\,z)=X+\iota\,(X-y)_{+}+\iota\,z\,(y-X),
\]
and
\[
\rho_{\textrm{AS}}(X):=\min_{y\in[\eta_{\min},\,\eta_{\max}]}\max_{z\in[0,\,1]}\mathbb{E}[G(X,\,y,\,z)] = \min_{y\in[\eta_{\min},\,\eta_{\max}]}\max_{z\in[0,\,1]}\mathbb{E}[y+h_{z}(X,\,y)].
\]
\end{example}

\begin{example}
\label{Example 3.2} \textbf{The functionally coherent risk measure}
(see \cite{Noyan2013,Noyan2015}) is a finite version of the Kusuoka
representation (see e.g. \cite{Shapiro2013}), which is the weighted average multiple CVaR in terms of their confidence levels. Given a range of confidence
levels $[0,\,1)$ with $\{\alpha_{i}\}_{i=0}^{m}\subset[0,\,1)$ and
$0\leq\alpha_{0}<\alpha_{1}<\cdot\cdot\cdot<\alpha_{m}<1$, we define
$\mathcal{P}(\{\alpha_{i}\}_{i=1}^{m})$ to be the set of probability
distributions on $\{\alpha_{i}\}_{i=1}^{m}$, and we let $\mathfrak{M}$
be a closed convex subset of $\mathcal{P}(\{\alpha_{i}\}_{i=1}^{m})$. In this case, we let $z=(z_{1},\,...,\,z_{m})\in\mathbb{R}^{m}$,
$y=(y_{1},...,y_{m})\in\mathbb{R}^{m}$, and $h_{z}(X,\,y)=\sum_{i=1}^{m}z_{i}(1-\alpha_{i})^{-1}\max\{X-y,\,0\}$,
$\mathcal{Y}=[\eta_{\min},\,\eta_{\max}]^{m}$, and $\mathcal{Z}=\mathfrak{M}$,
and
\[
G(X,\,y,\,z)=\sum_{i=1}^{m}z_{i}\left\{ y_{i}+(1-\alpha_{i})^{-1}\mathbb{E}_{P}\left[\max\{X-y_{i},\,0\}\right]\right\} .
\]
We then obtain
\begin{align}
\rho_{\textrm{KS}}(X) & :=\min_{y\in[\eta_{\min},\,\eta_{\max}]^{m}}\max_{z\in\mathfrak{M}} \mathbb{E}[ G(X,\,y,\,z)] = \min_{y\in[\eta_{\min},\,\eta_{\max}]^{m}}\max_{z\in\mathfrak{M}} \mathbb{E}[y+h_{z}(X,\,y)]. \label{Kusuoka}
\end{align}
\end{example}

Each instance of $G$ constructed in Examples
\ref{Example 3.3-1}, \ref{Example 3.2}, and \ref{Example 3.3},
satisfies parts (i)-(iv) in Assumption \ref{Assumption 3.3}. 

\section{Risk-aware Q-learning algorithm}

In this section, we introduce our\emph{ }'Risk-aware $Q$-learning'
(RaQL) algorithm. RaQL is an asynchronous off-policy learning algorithm with an inner and outer loop structure. 
It uses stochastic approximation in the inner loop for risk estimation and $Q$-learning in the outer loop for computing the optimal risk-aware policy. The ``off-policy'' characteristic means that the policy for exploring new states (denoted $\bar{\pi}$) and the
policy $\pi$ from the $Q$-value updates are different. The ``asynchronous'' characteristic means that the step-size rule of the algorithm ensures that only a single state-action pair is updated when it is observed and sends the step-size to zero whenever
a state-action pair is not visited.

\subsection{Algorithm description}

Let $\mathcal{V}\subset\mathbb{R}^{|\mathbb{S}|}$ be the space of
value functions on $\mathbb{S}$ equipped with the supremum norm $\|v\|_{\infty}:=\max_{s\in\mathbb{S}}|v(s)|$.
Under Assumption \ref{Assumption 2.1}, we have $\|v\|_{\infty}\le V_{\max}=C_{\max}/(1-\gamma)$ for all
$v\in\mathcal{V}$. The risk-aware Bellman operator $\mathcal{T}:\,\mathcal{V}\rightarrow\mathcal{V}$
corresponding to the MDP (\ref{Risk MDP 2}) is
\begin{equation}
\left[\mathcal{T}\,v\right]\left(s\right):=\min_{a\in\mathbb{A}}\left\{ c\left(s,\,a\right)+\gamma\,\rho\left(v\left(s^{\prime}\right)\right)\right\} ,\,\forall s\in\mathbb{S},\label{Contraction}
\end{equation}
where $s^{\prime}$ is the random next state following the transition kernel $P(\cdot|s,\,a)$. By \cite[Theorem 4]{Ruszczynski2010}
and \cite[Theorem 5.5]{Shen2013}, $\mathcal{T}$ is a contraction
with respect to the supremum norm and Problem (\ref{Risk MDP 2})
has an optimal value function $v^{\ast}$ satisfying $v^{\ast}=\mathcal{\mathcal{T}}v^{\ast}$.
The following Proposition \ref{Proposition 4.1} demonstrates why
$\mathcal{T}$ is a contraction when $\rho$ is a convex risk measure.
\begin{prop}
\label{Proposition 4.1} Suppose $\rho$ is a convex risk measure,
then
\[
\|\mathcal{T}\,v_{1}-\mathcal{T}\,v_{2}\|_{\infty}\leq\gamma\,\|v_{1}-v_{2}\|_{\infty},
\]
 for all $v_{1},\,v_{2}\in\mathcal{V}$.
\end{prop}

\begin{proof}
By \cite{follmer2002convex,Ruszczynski2006,Brown2012}, any convex
risk measure $\rho$ can be represented as
\begin{equation}
\rho(X)=\sup_{P\in \mathcal{P}(\Omega)}\left\{ \mathbb{E}_{P}[X]-\mu(P)\right\} ,\label{Convex risk-1-1}
\end{equation}
where $\mu$ is a convex function satisfying $\inf_{P\in \mathcal{P}(\Omega)}\mu(P)=0$,
and $P(\Omega)$ is the set of probability distributions on $\left(\Omega,\,\mathcal{F}\right)$.
Then, since $\rho$ is convex risk measure,
\begin{align*}
\|\mathcal{T}\,v_{1}-\mathcal{T}\,v_{2}\|_{\infty}\leq & \gamma\,\left|\sup_{P\in \mathcal{P}(\Omega)}\left\{ \mathbb{E}_{P}[v_{1}]-\mu(P)\right\} -\sup_{P\in \mathcal{P}(\Omega)}\left\{ \mathbb{E}_{P}[v_{2}]-\mu(P)\right\} \right|\\
\leq & \gamma\,\left|\sup_{P\in\mathcal{P} (\Omega)}\mathbb{E}_{P}[v_{1}-v_{2}]\right|\leq\gamma\,\sup_{P\in \mathcal{P}(\Omega)}\mathbb{E}_{P}\left|v_{1}-v_{2}\right|\leq\gamma\,\|v_{1}-v_{2}\|_{\infty},
\end{align*}
since $\mathbb{E}_{P^{\prime}}\left|v_{1}-v_{2}\right|\leq\|v_{1}-v_{2}\|_{\infty}$
for any $P\in\mathcal{P}(\Omega)$.
\end{proof}
For the dynamic setting, we now introduce the risk measure (\ref{Saddle-2})
for each state-action pair. Given the current state $s\in\mathbb{S}$
and action $a\in\mathbb{A}$, the risk for the value of the next state
$s^{\prime}\in\mathcal{\mathbb{S}}$ is defined to be: 
\begin{equation}
\mathcal{R}_{(s,\,a)}^{G}(v(s^{\prime})):=\min_{y\in\mathcal{Y}}\max_{z\in\mathcal{Z}}\mathbb{E}_{s^{\prime}\sim P(\cdot|s,\,a)}\left[G\left(v(s^{\prime}),\,y,\,z\right)\right],\label{Minimax Multiple}
\end{equation}
where the expectation is with respect to the transition kernel $P(\cdot|s,\,a)$.
Throughout the remainder of this paper, we assume that $\mathcal{R}_{(s,\,a)}^{G}$
is a convex risk measure satisfying axioms (A1)-(A3) for all $(s,\,a)\in\mathbb{K}$,
which means that $G$ may be constructed from Theorem \ref{Theorem 3.2-1}.
For simpler notation, we just take $G$ in (\ref{Minimax Multiple})
to be the same for all state-action pairs $\left(s,\,a\right)\in\mathbb{K}$.
We also assume that the $G$ in (\ref{Minimax Multiple}) satisfies
Assumption \ref{Assumption 3.3}. The corresponding risk-aware Bellman
operator is then $\mathcal{T}_{G}\text{: }\mathcal{\mathcal{V}}\rightarrow\mathcal{V}$
defined by
\begin{equation}
\left[\mathcal{T}_{G}\,v\right]\left(s\right):=\min_{a\in\mathbb{A}}\left\{ c\left(s,\,a\right)+\gamma\,\mathcal{R}_{(s,\,a)}^{G}\left(v\left(s^{\prime}\right)\right)\right\} ,\,\forall s\in\mathbb{S}.\label{Dynamic programming 1}
\end{equation}
Since $\mathcal{\mathcal{T}}_{G}$ is a\emph{ }contraction operator,
Problem (\ref{Risk MDP 2}) has an optimal value function $v^{\ast}$
satisfying $v^{\ast}=\mathcal{\mathcal{T}}_{G}v^{\ast}$. Additionally,
based on \cite[Theorem 4]{Ruszczynski2010} and \cite[Theorem 5.5]{Shen2013},
Problem (\ref{Risk MDP 2}) has a stationary optimal policy $\pi^{\ast}\in\Pi$
which is greedy with respect to $v^{*}$, i.e.

\[
\pi^{\ast}(s)\in\arg\min_{a\in\mathbb{A}}\left\{ c\left(s,\,a\right)+\gamma\,\mathcal{R}_{(s,\,a)}^{G}\left(v^{\ast}\left(s^{\prime}\right)\right)\right\} ,\,\forall s\in\mathbb{S}.
\]

Now, based on \cite{Watkins1992} and \cite[Theorem 1]{Shen2014},
we define the risk-aware $Q$-value to be:
\begin{equation}
Q(s,\,a):=c(s,\,a)+\gamma\mathcal{R}_{(s,\,a)}^{G}\left(\min_{a^{\prime}\in\mathbb{A}}Q(s^{\prime},\,a^{\prime})\right),\,\forall(s,\,a)\in\mathbb{K},\label{Q-risk O}
\end{equation}
and the optimal risk-aware $Q$-value, denoted as $Q^{\ast}$, to
be:
\begin{equation}
Q^{\ast}(s,\,a):=c(s,\,a)+\gamma\,\mathcal{R}_{(s,\,a)}^{G}\left(\max_{a^{\prime}\in\mathcal{\mathbb{A}}}Q^{\ast}(s^{\prime},\,a^{\prime})\right),\,\forall(s,\,a)\in\mathbb{K}.\label{Q-risk}
\end{equation}

The procedure of RaQL is presented as Algorithm 1 (we provide the
pseudo code in Algorithm 1 and later give the detailed descriptions
of each step). RaQL is an asynchronous algorithm based on two loops:
an outer loop (with $N$ iterations) and an inner loop (with $T$
iterations). In Algorithm 1, we let $Q_{t}^{n}(s,\,a)$ be the $Q$-value
of state-action pair $(s,\,a)\in\mathbb{K}$ w.r.t. iteration $n\leq N$
and $t\leq T$. Define $\tau_{\ast}(\cdot)$ to be a deterministic
function with $\tau_{\ast}(n)\in[1,\,n]$ for all $n\leq N$ satisfying
the same conditions as in \cite[Algorithm 2.1]{Nemirovski2005}, and
define $H_{\mathcal{Y}}$ and $H_{\mathcal{Z}}$ to be any positive
constants. Here we use $(y_{t}^{n}(s,\,a),\,z_{t}^{n}(s,\,a))$ to
denote the approximate saddle-point of Problem (\ref{Minimax Multiple})
for $(s,\,a)\in\mathbb{K}$ for all $n\leq N$ and $t\leq T$. The
step-sizes are $\theta_{k}^{n}(s,\,a)$ (outer loop) and $\lambda_{t,\alpha}$
(inner loop), and the exploration policy is $\bar{\pi}$. 

Define the collection of state-action pairs $\mathcal{G}:=\sigma\left\{ (s_{t}^{n},\,a^{n}),\,n\leq N,\,t\leq T\right\} $,
and the filtration is $\mathcal{G}_{t}^{n}=\left\{ \sigma\left\{ (s_{\tau}^{i},\,a_{\tau}^{i}),\,i<n,\,\tau\leq T\right\} \cup\left\{ (s_{\tau}^{n},\,a_{\tau}^{n}),\,\tau\leq t\right\} \right\} $
for all $t\leq T$ and $n\leq N$, with $\mathcal{G}_{t}^{0}=\left\{ \varnothing,\,\Omega^{\prime}\right\} $
for all $t\leq T$. This filtration is nested $\mathcal{G}_{t}^{n}\subseteq\mathcal{G}_{t+1}^{n}$
for all $1\leq t\leq T-1$ and $\mathcal{G}_{T}^{n}\subseteq\mathcal{G}_{0}^{n+1}$,
for all $1\leq n\leq N-1$, and captures the history of the algorithm. 
\begin{defn}
\label{Definition 4.4 } Given $\varepsilon\in(0,\,1)$, $\bar{\pi}$
is an $\varepsilon$-greedy exploration policy that chooses a random
action uniformly with probability $\varepsilon$ and otherwise (with
probability $1-\varepsilon$) greedily chooses the action with minimal
$Q$-value. We denote $a^{\prime}\in \arg\min_{a\in\mathbb{A}}Q_{T}^{n-1}(s,\,a)$
and suppose $\bar{\pi}$ satisfies $\mathbb{P}\left((s_{t}^{n},\,a^{n})=(s,\,a)|\mathcal{G}_{t-1}^{n}\right)=\varepsilon$
and $\mathbb{P}\left((s_{t}^{n},\,a^{n})=(s,\,a^{\prime})|\mathcal{G}_{t-1}^{n}\right)=1-\varepsilon$
for all $(s,\,a)\in\mathbb{K}$, for any $n\leq N,\,t\leq T$. Similarly,
we have $\mathbb{P}\left((s_{1}^{n},\,a^{n})=(s,\,a)|\mathcal{G}_{T}^{n-1}\right)=\varepsilon$,
and $\mathbb{P}\left((s_{t}^{n},\,a^{n})=(s,\,a^{\prime})|\mathcal{G}_{T}^{n-1}\right)=1-\varepsilon$
for all $(s,\,a)\in\mathbb{K}$, for any $n\leq N,\,t\leq T$.
\end{defn}

The exploration policy $\bar{\pi}$ in Definition \ref{Definition 4.4 }
guarantees, by the Extended Borel-Cantelli Lemma in \cite{Breiman1992},
that we will visit every state-action pair infinitely often with probability
one. This balances exploration and exploitation in RaQL more generally,
which helps the algorithm avoid getting stuck at locally optimal policies.
It should be noted that RaQL is an off-policy learning algorithm,
so the policy for exploration i.e. $\bar{\pi}$ and the policy from the 
$Q$-value updates (i.e. $\pi$) are different. 
\begin{assumption}
\label{Assumption 4.5} For all $(s,\,a)\in\mathbb{K}$ and for all
$n\leq N,\,t\leq T$, the step-sizes for the $Q$-value update satisfy:
$\sum_{n=1}^{\infty}\theta_{k}^{n}(s,\,a)=\infty$ and $\sum_{n=1}^{\infty}\theta_{k}^{n}(s,\,a)^{2}<\infty$
for all $t\leq T$ and $(s,\,a)\in\mathbb{K}$ a.s. Let $\#(s,\,a,\,n)$
denote one plus the number of times, until the beginning of iteration
$n$, that the state-action pair $(s,\,a)$ has been visited, and
let $N^{s,a}$ denote the set of outer iterations where action $a$
was performed in state $s$. The step-sizes $\theta_{k}^{n}(s,\,a)$
satisfy $\theta_{k}^{n}(s,\,a):=\frac{1}{[\#(s,a,n)]^{k}}$ if $n\in N^{s,a}$
and $\theta_{k}^{n}(s,\,a)=0$ otherwise.
\end{assumption}

Assumption \ref{Assumption 4.5} sends the step-size to zero whenever
a state-action pair is not visited. This step-size selection ensures
that only a single state-action pair is updated when it is observed,
which reveals the asynchronous nature of the $Q$-learning algorithm
stated in \cite{Even-Dar2004}. We choose $k\in(1/2,\,1]$, where
we call $k=1$ a linear learning rate and $k\in(1/2,\,1)$ a polynomial
learning rate, and step-sizes $\lambda_{t,\alpha}=C\,t^{-\alpha}$
for the risk estimation with $\alpha\in(0,\,1]$ for arbitrary $C>0$.
\begin{algorithm}
\caption{Risk-aware $Q$-learning (RaQL)}

\textbf{Begin} 

\qquad Initialization using \textbf{Step 0};

\qquad \textbf{For $n=1,\,2\,,...,\,N$ do }

\qquad \qquad Update the approximation results using \textbf{Step
1};

\qquad \qquad Observe the current state $s_{1}^{n}$, and choose
an action $a^{n}$ according to exploration strategy $\bar{\pi}$;

\qquad \qquad Observe resulting cost $c$, and next state $s_{2}^{n}$; 

\qquad \qquad \textbf{For $t=1,\,2\,,...,\,T$ do}

\qquad \qquad \qquad Update the risk-aware cost-to-go using \textbf{Step
2};

\qquad \qquad \qquad Do stochastic approximation of $\{Q_{t}^{n}\}$
with respect to $t$ using \textbf{Step 3};

\qquad \qquad \qquad Do stochastic approximation of risk measure by
\textbf{Step 4};

\qquad \qquad \qquad Observe new state $s_{t+2}^{n}$, and set
$s_{t}^{n}=s_{t+1}^{n};$

\qquad \qquad \textbf{end for }

\qquad \textbf{end for}

\qquad \textbf{Return} $Q_{T}^{N}$.

\textbf{end}
\end{algorithm}
The detailed description for each step of RaQL follows:

\textbf{Step 0}: Initialize an approximation for the $Q$-values $Q^{0}(s,\,a)$
for all $(s,\,a)\in\mathbb{K}$; given step-sizes $\theta_{k}^{n},\,\lambda_{t,\alpha}>0$
for $t\leq T$ and $n\leq N$, with learning rates $k$ and $\alpha$;
deterministic function $\tau^{\ast}(\cdot)$;
initialize $\left(y_{t}^{0}(s,\,a),\,z_{t}^{0}(s,\,a)\right)$ for
all $t\leq T$ and $(s,\,a)\in\mathbb{K}$.

\textbf{Step 1}: For all $(s,\,a)\in\mathbb{K}$, set $\left(y_{1}^{n}(s,\,a),\,z_{1}^{n}(s,\,a)\right)=\left(y_{T}^{n-1}(s,\,a),\,z_{T}^{n-1}(s,\,a)\right)$
and $Q_{1}^{n}(s,\,a)=Q_{T}^{n-1}(s,\,a)$. 

\textbf{Step 2}: Compute $v^{n-1}(s_{t+1}^{n})=\min_{a\in\mathcal{\mathbb{A}}}Q_{T}^{n-1}(s_{t+1}^{n},\,a)$.
Compute
\begin{equation}
\hat{q}_{t}^{n}(s_{t}^{n},\,a^{n})=c(s_{t}^{n},\,a^{n})+\text{\ensuremath{\gamma\,}}G\left(v^{n-1}(s_{t+1}^{n}),\,y^{n,t}(s_{t}^{n},\,a^{n}),\,z^{n,t}(s_{t}^{n},\,a^{n})\right),\label{cost-to-go}
\end{equation}
and
\begin{equation}
\left(y^{n,t}(s_{t}^{n},\,a^{n}),\,z^{n,t}(s_{t}^{n},\,a^{n})\right)=\frac{1}{t-\tau_{\ast}(t)+1}\sum_{\tau=\tau_{\ast}(t)}^{t}\left(y_{\tau}^{n}(s_{t}^{n},\,a^{n}),\,z_{\tau}^{n}(s_{t}^{n},\,a^{n})\right).\label{gnt}
\end{equation}
To explain, given iteration $n$, in each iteration $t\leq T$, we observe
a new state $s_{t+1}^{n}$ given current state $s_{t}^{n}$ and action
$a^{n}$, compute the estimated risk-aware cost-to-go $\hat{q}_{t}^{n}$
from one sample in Eq. (\ref{cost-to-go}). Here, we use the $Q-$value $Q_{T}^{n-1}$
at iteration $T$ and we compute $v^{n-1}$ from it as input for Eq.  (\ref{cost-to-go}),
although all the $Q$-values $\{Q_{t}^{n-1}\}^{T}_{t=1}$ are recorded. 

\textbf{Step 3}: For all $(s,\,a)\in\mathbb{K}$, compute
\begin{equation}
Q_{t}^{n}(s,\,a)=\left(1-\text{\ensuremath{\theta}}_{k}^{n}(s,\,a)\right)Q_{T}^{n-1}(s,\,a)+\theta_{k}^{n}(s,\,a)\,\hat{q}_{t}^{n}(s_{t}^{n},\,a^{n}).\label{Q-learning}
\end{equation}
This update is the same as in standard $Q$-learning w.r.t. the outer
loop.

\textbf{Step 4}: Update
\begin{align}
\left(y_{t+1}^{n}(s_{t}^{n},\,a^{n}),\,z_{t+1}^{n}(s_{t}^{n},\,a^{n})\right)= & \Pi_{\mathcal{Y}\times\mathcal{Z}}\left\{ \left(y_{t}^{n}(s_{t}^{n},\,a^{n}),\,z_{t}^{n}(s_{t}^{n},\,a^{n})\right)\right.\nonumber \\
 & \left.-\lambda_{t,\alpha}\psi\left(v^{n-1}(s_{t+1}^{n}),\,y^{n,t}(s_{t}^{n},\,a^{n}),\,z^{n,t}(s_{t}^{n},\,a^{n})\right)\right\} ,\label{SASP1}
\end{align}
where $\Pi_{\mathcal{Y}\times\mathcal{Z}}[(y,\,z)]:=\arg\min_{(y^{\prime},\,z^{\prime})\in\mathcal{Y}\times\mathcal{Z}}\|(y,\,z)-(y^{\prime},\,z^{\prime})\|_{2}$
is the Euclidean projection onto $\mathcal{Y}\times\mathcal{Z}$,
and
\begin{align}
 \psi\left(v^{n-1}(s_{t+1}^{n}),\,y^{n,t}(s_{t}^{n},\,a^{n}),\,z^{n,t}(s_{t}^{n},\,a^{n})\right)=  \left(\begin{array}{c}
H_{\mathcal{Y}}G_{y}(v^{n-1}(s_{t+1}^{n}),\,y^{n,t}(s_{t}^{n},\,a^{n}),\,z^{n,t}(s_{t}^{n},\,a^{n}))\\
-H_{\mathcal{Z}}G_{z}(v^{n-1}(s_{t+1}^{n}),\,y^{n,t}(s_{t}^{n},\,a^{n}),\,z^{n,t}(s_{t}^{n},\,a^{n}))
\end{array}\right).\label{SASP2}
\end{align}

We provide some further remarks on Algorithm 1. 
\begin{enumerate}
\item In Eqs. (\ref{gnt}), (\ref{SASP1}), and (\ref{SASP2}), we use
the stochastic approximation for saddle-point problems (SASP) algorithm
as presented in \cite[Algorithm 2.1]{Nemirovski2005} (the detailed
steps appear in Algorithm 2). In Algorithm 1, we apply and extend
SASP to estimate the risk with respect to each state-action pair,
where the value functions on random next states are the problem input.
Classic stochastic approximation may result in extremely slow convergence
for degenerate objectives (i.e. the objective has a singular Hessian).
However, based on the analysis in \cite{Nemirovski2005}, SASP with
properly chosen $\alpha\in(0,\,1]$ preserves a ``reasonable'' (close
to $O(n^{-1/2})$) convergence rate even when the objective is non-smooth
and/or degenerate. For instance, the Kusuoka representation (\ref{Kusuoka})
is non-smooth and degenerate since the Hessian matrix is singular
with respect to $p\in\mathfrak{M}$. Thus, SASP is more appropriate
for estimation of risk measures.
\begin{algorithm}
\caption{SASP}

\textbf{Step 0. }Input: i.i.d samples $\{x_{t}\}_{t=1}^{\infty}$
of random variable $X$; step-sizes $\lambda_{t}=C\,t^{-\alpha}$
with $\alpha\in(0,\,1]$ for $C>0$; deterministic function
$\tau_{\ast}(\cdot)$; initial $(y_{1},\,z_{1})\in\mathcal{Y}\times\mathcal{Z}$;
positive constants $H_{\mathcal{Y}},\,H_{\mathcal{Z}}$,

\textbf{Step 1.} \textbf{for} $t=1,2,...$ \textbf{do}

\textbf{$\qquad$Step 1a.} Update
\begin{align*}
(y_{t+1},\,z_{t+1}) & =\Pi_{\mathcal{Y}\times\mathcal{Z}}\left[(y_{t},\,z_{t})-\lambda_{n}\psi(x_{t};\,y_{t},\,z_{t})\right],\,t\geq 1.
\end{align*}
The vector $\psi(x;\,y,\,z)\in\mathbb{R}^{d_{1}}\times\mathbb{R}^{d_{2}}$
is
\[
\psi(x;\,y,\,z)=\left(H_{\mathcal{Y}}G_{y}(x,\,y,\,z),\,-H_{\mathcal{Z}}G_{z}(x,\,y,\,z)\right),
\]
and $x$ is any realization of the random variable $X$,

\textbf{$\qquad$Step 1b.} Take the moving average
\[
(y_{t},\,z_{t})=\frac{1}{t-\tau_{\ast}(t)+1}\sum_{\tau=\tau_{\ast}(t)}^{t}(y_{\tau},\,z_{\tau}).
\]
\end{algorithm}
\item The risk estimation and the $Q$-value updates are mutually dependent.
Given iteration $n$, the risk estimation, Step 4, applies SASP to update
the candidate solution of the saddle-point problem for each selected
state-action pair, using the $Q$-value from the previous iteration (i.e.
$Q_{T}^{n-1}$). Given the current state-action pair $(s,\,a)$, neither
the expected value of $G$ in (\ref{Minimax Multiple}), nor the subdifferentials
$\{\partial_{y}\mathbb{E}G\left(v(s^{\prime}),\,y,\,z\right),\,\partial_{z}\mathbb{E}G\left(v(s^{\prime}),\,y,\,z\right)\}$
(the expectation is with respect to the transition kernel), are available.
We assume that at any iteration $t$, for every desired point $\left(y^{n,t}(s_{t}^{n},\,a^{n}),\,z^{n,t}(s_{t}^{n},\,a^{n})\right)$,
one can obtain a biased estimator of the aforementioned subgradients.
These estimates form a realization of the pair of random vectors,
\[
G_{y}(v^{n-1}(s_{t+1}^{n}),\,y^{n,t}(s_{t}^{n},\,a^{n}),\,z^{n,t}(s_{t}^{n},\,a^{n}))\in\mathbb{R}^{d_{1}},
\]
 and
\[
G_{z}(v^{n-1}(s_{t+1}^{n}),\,y^{n,t}(s_{t}^{n},\,a^{n}),\,z^{n,t}(s_{t}^{n},\,a^{n}))\in\mathbb{R}^{d_{2}},
\]
where $\{v^{n-1}(s_{t}^{n})\}_{t=1}^{T}$ is a sequence of independent
identically distributed \textquotedblleft observation noises\textquotedblright{}
according to the underlying transition kernel. In Step 3, the $Q$-value
in the current iteration $\{Q_{t}^{n}\}^{T}_{t=1}$ is updated based
on $Q_{T}^{n-1}$ and the approximate risk-to-go $\hat{q}_{t}^{n}(s_{t}^{n},\,a^{n})$
follows the same update rule in standard $Q$-learning. 
\item We resolve the \emph{``overestimation''} problem (the accumulated
error from poor risk estimation) in reinforcement learning described
in \cite{Hasselt2010,VanHasselt2016} through the special inner-outer
loop structure of RaQL. This phenomenon is not mentioned or resolved
in \cite{Shen2014}, where the iterative procedure is analogous to
standard $Q$-learning because of the special structure of utility-based
shortfall. RaQL reduces the bias by multiple iterations of inner loop to provide
an accurate risk estimate before updating the $Q$-values. Our algorithm
is thus related to ``Repeated updated $Q$-learning'' as\emph{ }proposed
in \cite{Abdallah2016}, which resolves performance degradation when
the algorithm is used in noisy non-stationary environments. Our algorithm
addresses what we refer to as the ``policy-bias'' of the action
value update. Policy-bias appears in $Q$-learning because the value
of an action is only updated when the action is executed. Consequently,
the effective rate of updating an action value directly depends on
the probability of choosing the action for execution. For any state-action
pair $(s_{1}^{n},\,a^{n})$ chosen by $\bar{\pi}$ in the outer loop
w.r.t. $n\leq N$, we perform stochastic approximation to
estimate the risk for state-action pairs with fixed action $a^{n}$
in iterations $t\leq T$. This convention increases the probability
of choosing optimal actions while also getting a more accurate risk
estimate. 
\item Often, the cost function is random rather than deterministic. For
example, in inventory control, for stock $s$, we order quantity $a$,
and then only learn the cost after seeing the random demand. Let $c(s,\,a,\,X)$
denote the random cost, where $X$ is random noise, and assume that
$0\leq c(s,\,a,\,X)\leq C_{\max}$ for all $(s,\,a)\in\mathbb{K}$
a.s. Following the same technique as for standard $Q$-learning from
\cite{Tsitsiklis1994,Powell2007}, we can substitute realizations of the random cost
for deterministic costs in our update rule Eq. (\ref{Q-risk}), and
then compute the risk of the sum of the random cost and the discounted
cost-to-go,
\begin{equation}
Q^{\ast}(s,\,a)=\mathcal{R}_{\left(s,\,a\right)}^{G}\left[c(s,\,a,\,X)+\gamma\,\min_{a\in\mathcal{\mathbb{A}}}Q^{\ast}(s^{\prime},\,a)\right],\label{Random Q}
\end{equation}
for all $(s,\,a)\in\mathbb{K}$. Let $\{x_{t}^{n}\}_{1\leq t\leq T,\,1\leq n\leq N}$ denote a sequence
of independent identically distributed samples of $X$ indexed by
$t$ and $n$, and let $c(s_{t}^{n},\,a^{n},\,x_{t}^{n})$ denote
the cost observed in state $s_{t}^{n}$, for action $a^{n}$, with
noise $x_{t}^{n}$ at iteration $t$ and $n$. In terms of solving
Problem (\ref{Random Q}), we replace the earlier expression (\ref{cost-to-go})
in Algorithm 1 with
\begin{equation}
\hat{q}_{t}^{n}(s_{t}^{n},\,a^{n})=G\left(c(s_{t}^{n},\,a^{n},\,x_{t}^{n})+\text{\ensuremath{\gamma}}\,v^{n-1}(s_{t+1}^{n}),\,y^{n,t}(s_{t}^{n},\,a^{n}),\,z^{n,t}(s_{t}^{n},\,a^{n})\right),\label{new q}
\end{equation}
and replace Step 4 in Algorithm 1 with
\begin{align*}
\left(y_{t+1}^{n}(s_{t}^{n},\,a^{n}),\,z_{t+1}^{n}(s_{t}^{n},\,a^{n})\right)= & \Pi_{\mathcal{Y}\times\mathcal{Z}}\left\{ \left(y_{t}^{n}(s_{t}^{n},\,a^{n}),\,z_{t}^{n}(s_{t}^{n},\,a^{n})\right)\right.\\
 & \left.-\lambda_{t,\alpha}\psi\left(c(s_{t}^{n},\,a^{n},\,x_{t}^{n})+\text{\ensuremath{\gamma}}\,v^{n-1}(s_{t+1}^{n}),\,y^{n,t}(s_{t}^{n},\,a^{n}),\,z^{n,t}(s_{t}^{n},\,a^{n})\right)\right\} ,
\end{align*}
and
\begin{align*}
 & \psi\left(v^{n-1}(s_{t+1}^{n}),\,y^{n,t}(s_{t}^{n},\,a^{n}),\,z^{n,t}(s_{t}^{n},\,a^{n})\right)\\
= & \left(\begin{array}{c}
H_{\mathcal{Y}}G_{y}(c(s_{t}^{n},\,a^{n},\,x_{t}^{n})+\text{\ensuremath{\gamma}}\,v^{n-1}(s_{t+1}^{n}),\,y^{n,t}(s_{t}^{n},\,a^{n}),\,z^{n,t}(s_{t}^{n},\,a^{n}))\\
-H_{\mathcal{Z}}G_{z}(c(s_{t}^{n},\,a^{n},\,x_{t}^{n})+\text{\ensuremath{\gamma}}\,v^{n-1}(s_{t+1}^{n}),\,y^{n,t}(s_{t}^{n},\,a^{n}),\,z^{n,t}(s_{t}^{n},\,a^{n}))
\end{array}\right).
\end{align*}
This random cost variant of Algorithm 1 is also based on repeated
stochastic approximation for risk estimation for a fixed action in
the inner loop, which resolves the overestimation problem caused by
the biased risk estimation.  
\end{enumerate}

\subsection{Main results }

We now state our convergence results for RaQL. 
\begin{thm}
\label{Theorem 4.4} (Almost Sure Convergence) Suppose Assumption
\ref{Assumption 4.5} holds, and fix $T\geq 1$. Let
$\{Q_{T}^{n}\}_{n\geq1}$ be the $Q$-value produced by Algorithm 1. Then $Q^{n}_{T}\rightarrow Q^{\ast}$ as $n\rightarrow \infty$, almost surely. 
\end{thm}

The proof of Theorem \ref{Theorem 4.4} uses techniques from the stochastic
approximation literature \cite{Kushner2003}, \cite{Borkar2000} and
\cite{Borkar2008}, which are applied to reinforcement learning and
$Q$-learning in \cite{Watkins1992,Tsitsiklis1994,Bertsekas1996,Jaakkola1994}.
However, our algorithm differs from risk-neutral $Q$-learning because
it updates $Q$-values as well as estimates risk via stochastic approximation.
The intuition of our proof follows the idea in \cite{Jiang2017} where
multiple ``stochastic approximation instances'' for both $Q$-value
updates and risk estimation are ``pasted'' together. The error in
$Q$-values is captured by the distance of $Q_{t}^{n}$ to the optimal
$Q^{*}$, while the error in risk estimation is captured by the duality
gap of the corresponding stochastic saddle-point problem. We must
account for the interdependence of these two errors in several parts
of our proof.

Next, we present the convergence rate for RaQL for a polynomial learning
rate. We first clarify several important concepts and definitions
that appear in this result. For any $(s,\,a)\in\mathbb{K}$, we define
$(y^{n,\ast}(s,\,a),\,z^{n,\ast}(s,\,a))$ to be a saddle-point of
\[
(y(s,\,a),\,z(s,\,a))\rightarrow\mathbb{E}_{s^{\prime}\sim P(\cdot|s,a)}\left[G\left(v^{n-1}(s^{\prime}),\,y(s,\,a),\,z(s,\,a)\right)\right],
\]
for each $(s,\,a)\in\mathbb{K}$, where $v^{n-1}(s^{\prime}):=\min_{a\in\mathcal{\mathbb{A}}}Q_{T}^{n}(s^{\prime},\,a)$.
Similarly, we define $(y^{\ast}(s,\,a),\,z^{\ast}(s,\,a))$ to be
a saddle-point of
\[
(y(s,\,a),\,z(s,\,a))\rightarrow\mathbb{E}_{s^{\prime}\sim P(\cdot|s,a)}\left[G\left(v^{\ast}(s^{\prime}),\,y(s,\,a),\,z(s,\,a)\right)\right],
\]
for each $(s,\,a)\in\mathbb{K}$, where $v^{\ast}(s^{\prime}):=\min_{a\in\mathcal{\mathbb{A}}}Q^{\ast}(s^{\prime},\,a)$.
We define the Hausdorff distance between sets with respect to the
Euclidean norm based on \cite{Rockafellar1998}. Let $\mathfrak{A}$
and $\mathfrak{B}$ be two non-empty subsets of a metric space $(M,\,\|\cdot\|_{2})$.
We define their Hausdorff distance $\mathfrak{D}_{H}(\mathfrak{A},\,\mathfrak{B}\text{)}$
by
\[
\mathfrak{D}_{H}(\mathfrak{A},\,\mathfrak{B}\text{)}:=\max\left\{ \sup_{A\in\mathfrak{A}}\inf_{B\in\mathfrak{B}}\|A-B\|_{2},\,\sup_{B\in\mathfrak{B}}\inf_{A\in\mathfrak{A}}\|A-B\|_{2}\right\} .
\]
Let
\[
\mathcal{S}_{1}^{n,t}:=\{(\partial G_{y}(v^{n-1},\,y^{n,t},\,z^{n,t}),\,\partial G_{z}(v^{n-1},\,y^{n,t},\,z^{n,t}))\},
\]
and
\[
\mathcal{S}_{2}^{n,t}:=\{(\partial G_{y}(v^{\ast},\,y^{n,t},\,z^{n,t}),\,\partial G_{z}(v^{\ast},\,y^{n,t},\,z^{n,t}))\},
\]
be the two subdifferentials of $G$ with respect to $v^{n-1}$
and $v^{\ast}$, given $(y^{n,t},\,z^{n,t})$. The results of the
following lemmas appear in our main convergence rate result. First,
Lemma \ref{Lemma 5.5} bounds $\mathfrak{D}_{H}(\mathcal{S}_{1}^{n,t},\,\mathcal{S}_{2}^{n,t})$
with respect to $\|Q_{T}^{n-1}-Q^{\ast}\|_{2}$. 
\begin{lem}
\label{Lemma 5.5}\cite{Attouch1993}\cite[Theorem 4.1]{Penot1993}
Suppose Assumption \ref{Assumption 3.3}(ii) holds, then there exist
$K_{\psi}^{(1)},\,K_{\psi}^{(2)}>0$, such that
\begin{align}
\mathfrak{D}_{H}(\mathcal{S}_{1}^{n,t},\,\mathcal{S}_{2}^{n,t})\leq K_{\psi}^{(1)}\|Q_{T}^{n-1}-Q^{\ast}\|_{2}+K_{\psi}^{(2)}\sqrt{\|Q_{T}^{n-1}-Q^{\ast}\|_{2}},\label{HS modulus}
\end{align}
for all $n\leq N$ and $t\leq T$. 
\end{lem}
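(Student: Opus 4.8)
The plan is to prove the Hausdorff distance bound (\ref{HS modulus}) by relating the variation in subdifferentials of $G$ to the variation in the value functions $v^{n-1}$ and $v^{\ast}$, and then converting the latter into $\|Q_T^{n-1} - Q^{\ast}\|_2$ via the definitions. First I would observe that the two sets $\mathcal{S}_1^{n,t}$ and $\mathcal{S}_2^{n,t}$ are the subdifferentials of the same function $G(\cdot,\,y^{n,t},\,z^{n,t})$ evaluated at the two points $v^{n-1}$ and $v^{\ast}$ (regarded as the first argument of $G$). The key analytic input is the cited result of Attouch and Penot on the continuity of subdifferentials of convex functions: for a convex (and, by Assumption \ref{Assumption 3.3}(ii), Lipschitz) function, the subdifferential map is upper semicontinuous, and one can quantify how $\mathfrak{D}_H(\partial G(v_1,\cdot),\,\partial G(v_2,\cdot))$ varies with $\|v_1 - v_2\|$. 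The Lipschitz continuity of $G$ with constant $K_G$ bounds the subdifferentials uniformly, which is what makes the Attouch--Penot estimate applicable and yields a bound of the form $\mathfrak{D}(\mathcal{S}_1^{n,t},\,\mathcal{S}_2^{n,t}) \leq A\,\|v^{n-1} - v^{\ast}\|_2 + B\,\sqrt{\|v^{n-1} - v^{\ast}\|_2}$ for some constants $A,\,B > 0$; the square-root term is characteristic of the modulus-of-continuity estimates for subdifferentials (reflecting the $\tfrac12$-Hölder behavior that can arise when the function is merely convex rather than strongly smooth).

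The next step is to translate $\|v^{n-1} - v^{\ast}\|_2$ into $\|Q_T^{n-1} - Q^{\ast}\|_2$. By definition, $v^{n-1}(s') = \min_{a\in\mathbb{A}} Q_T^{n-1}(s',\,a)$ and $v^{\ast}(s') = \min_{a\in\mathbb{A}} Q^{\ast}(s',\,a)$. Using the elementary nonexpansiveness of the $\min$ operation, namely $\bigl|\min_a Q_1(s',a) - \min_a Q_2(s',a)\bigr| \leq \max_a |Q_1(s',a) - Q_2(s',a)|$, I would conclude that $\|v^{n-1} - v^{\ast}\|_2 \leq \|Q_T^{n-1} - Q^{\ast}\|_2$ (up to the relevant norm normalization over $\mathbb{S}$ versus $\mathbb{K}$, which only contributes a dimensional constant absorbable into $K_\psi^{(1)},\,K_\psi^{(2)}$). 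Substituting this into the Attouch--Penot bound immediately produces the claimed inequality, with $K_\psi^{(1)}$ absorbing $A$ and $K_\psi^{(2)}$ absorbing $B$, both depending on $K_G$ and the geometry of $\mathcal{Y}\times\mathcal{Z}$.

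The main obstacle I anticipate is making the application of the Attouch--Penot subdifferential continuity theorem fully rigorous in this setting. That body of results typically concerns the Attouch--Wets or Mosco convergence of subdifferential graphs and does not always hand over a clean pointwise Hausdorff-distance estimate with an explicit modulus; extracting the specific form $A\,r + B\sqrt{r}$ requires checking that the hypotheses (convexity in the first argument, uniform Lipschitz bound, and some regularity ensuring the subdifferentials are nonempty and bounded) are exactly the ones guaranteed by Assumption \ref{Assumption 3.3}. In particular, one must verify that $G$ is convex as a function of its first argument $X$ along the directions relevant here, since Assumption \ref{Assumption 3.3}(iii) only asserts convexity in $y$ and concavity in $z$; the needed convexity in $X$ follows from the construction $G(X,y,z) = y + h_z(X-y)$ in Theorem \ref{Theorem 3.2-1} together with convexity of $h_z$, so I would invoke that structure to justify applying the convex-subdifferential machinery. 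The remaining work — the $\min$-nonexpansiveness and the constant bookkeeping — is routine, so the entire weight of the proof rests on correctly citing and instantiating the subdifferential-continuity estimate.
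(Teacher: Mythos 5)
Your proposal follows exactly the route the paper intends: Lemma \ref{Lemma 5.5} is stated in the paper purely as a citation to Attouch and Penot with no written proof, the intended argument being precisely your chain (Lipschitz continuity of $G$ in its first argument $\Rightarrow$ quantitative stability of the subdifferentials under perturbation of that argument, with the characteristic $A\,r+B\sqrt{r}$ modulus $\Rightarrow$ conversion of $\|v^{n-1}-v^{\ast}\|$ into $\|Q_{T}^{n-1}-Q^{\ast}\|_{2}$ via the nonexpansiveness of $\min$, i.e.\ the paper's Fact \ref{Fact 5.2}). One small correction: $\mathcal{S}_{1}^{n,t}$ and $\mathcal{S}_{2}^{n,t}$ are subdifferentials with respect to $(y,\,z)$ of two \emph{different} functions (parameterized by $v^{n-1}$ versus $v^{\ast}$) evaluated at the \emph{same} point $(y^{n,t},\,z^{n,t})$, so the convexity hypothesis you need is convexity in $y$ and concavity in $z$ from Assumption \ref{Assumption 3.3}(iii) together with Lipschitz dependence on the first argument from Assumption \ref{Assumption 3.3}(ii) — your anticipated obstacle about convexity of $G$ in $X$ is not actually required.
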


The next result pertains to the modulus of stability of the saddle-points for the estimated risk measure w.r.t. each $n\leq N$. 
\begin{lem}
\label{Lemma 4.7}\cite[Theorem 3.1]{Terazono2015}\cite[Proposition 3.1]{Levy2000}
Suppose Assumption \ref{Assumption 3.3} holds, then there exists
$K_{S}>0$ such that for all $n\leq N$ we have
\begin{align*}
 & \|(y^{\ast},\,z^{\ast})-(y^{n,\ast},\,z^{n,\ast})\|_{2}\\
\leq & K_{S}\left\Vert \mathbb{E}_{s^{\prime}\sim P(\cdot|s,a)}\left[G\left(v^{n-1}(s^{\prime}),\,y^{\ast}(s,\,a),\,z^{\ast}(s,\,a)\right)\right]\right.\\
 & -\left.\mathbb{E}_{s^{\prime}\sim P(\cdot|s,a)}\left[G\left(v^{n-1}(s^{\prime}),\,y^{n,\ast}(s,\,a),\,z^{n,\ast}(s,\,a)\right)\right]\right\Vert _{2}.
\end{align*}
\end{lem}

\begin{thm}
\label{Theorem 4.5-1} (High Probability Convergence Rate) Suppose
Assumption \ref{Assumption 4.5} holds, and choose $\tilde{\varepsilon}>0$
and $\delta\in(0,\,1)$. For a polynomial learning rate (i.e., $k\in(1/2,\,1)$),
there exist $0<\kappa<1/C\,K_{\psi}^{(1)}$ and
\begin{equation}
\beta_{T}:=\frac{K_{G}}{2}\left\{ 1-\gamma-\sqrt{\frac{C(\tau_{\ast}(T))^{-\alpha}}{\kappa(1-C(\tau_{\ast}(T))^{-\alpha}K_{\psi}^{(1)}\kappa)}}-K_{G}K_{S}\right\},\label{Beta-1}
\end{equation}
such that we have $\|Q_{T}^{N}-Q^{\ast}\|_{2}\leq\tilde{\varepsilon}$ with probability at least $1-\delta$, for $N$ and $T$ satisfying:
\begin{equation}
(\tau_{\ast}(T))^{-\alpha}\leq\frac{(1-\gamma-K_{G}K_{S})^{2}\kappa/C}{1+K_{\psi}^{(1)}(1-\gamma-K_{G}K_{S})^{2}\kappa^{2}},\label{C1}
\end{equation}
\begin{equation}
(\tau_{\ast}(T))^{-\alpha}\geq\frac{\left[K_{G}-K_{G}(2\gamma+K_{G}K_{S})-2\right]^{2}\kappa}{C\left\{ K_{G}+\left[K_{G}-K_{G}(2\gamma+K_{G}K_{S})-2\right]^{2}K_{\psi}^{(1)}\kappa^{2}\right\} },\label{C2}
\end{equation}
and
\begin{equation}
N=\Omega\left(\left(\frac{V_{\max}^{2}|\mathbb{S}||\mathbb{A}|\ln(V_{\max}\left(|\mathbb{S}||\mathbb{A}|\right)^{3/2}/[\delta\beta_{T}\tilde{\varepsilon}(1-\varepsilon)]}{\beta_{T}{}^{2}\tilde{\varepsilon}^{2}(1-\varepsilon)^{1+3k}}\right)^{1/k}+\left(\frac{1}{(1-\varepsilon)\beta_{T}}\ln\frac{V_{\max}\sqrt{|\mathbb{S}||\mathbb{A}|}}{\tilde{\varepsilon}}\right)^{\frac{1}{1-k}}\right).\label{Lower bound outer iteration}
\end{equation}
\end{thm}

\begin{rem}
To interpret the bound (\ref{Lower bound outer iteration}), we first
consider its dependence on $\tilde{\varepsilon}$. This dependence
gives us the bound $\Omega((\ln(1/\tilde{\varepsilon})/\tilde{\varepsilon}^{2})^{1/k}+(\ln(1/\tilde{\varepsilon}))^{1/(1-k)})$,
which mirrors the bound for classical asynchronous $Q$-learning in
\cite[Theorem 4]{Even-Dar2004}. The lower bound on the number of
outer iterations $N$ (\ref{Lower bound outer iteration}) is decreasing
with $\beta_{T}$. Since the quantity $\beta_{T}$ is increasing with
$T$, the lower bound on $N$ is decreasing with $T$. Consequently,
improving the quality of risk estimation by increasing the number
of inner loops will improve the overall convergence rate of the algorithm.
In addition, the sample complexity will first decrease and then increase
as a function of the learning rate $k$ (which is also observed for
standard $Q$-learning in \cite{Even-Dar2004}). Furthermore, the
sample complexity is directly proportional to the discount factor
$\gamma$, problem size $|\mathbb{S}||\mathbb{A}|$, and bound $V_{\max}$
on the magnitude of the value functions in $\mathcal{V}$. It is inversely
proportional to the Lipschitz constant $K_{G}$ and the modulus of
the Hausdorff distance of the subdifferentials $K_{\psi}^{\text{(1)}}$.
Increasing $\varepsilon$ will also increase the sample complexity,
revealing that there is a tradeoff between avoiding the algorithm
getting stuck at local optima and reducing the overall computational
complexity. The sample complexity also depends on the risk measure, since
different risk measures have different constants $K_{G}$, $K_{\psi}^{(1)}$,
and $K_{S}$.
\end{rem}

We also derive the convergence rate of RaQL for a linear learning
rate (i.e., $k=1$) in Theorem \ref{Theorem 4.6}. Under a linear learning rate, we can obtain convergence rate results in both probability and expectation
as summarized in Theorem \ref{Theorem 4.7}.
\begin{thm}
\label{Theorem 4.6} (High Probability Convergence Rate) Suppose Assumption
\ref{Assumption 4.5} holds, and choose $\tilde{\varepsilon}>0$ and
$\delta\in(0,\,1)$. For a linear learning rate $k=1$, there exists $\beta_{T}$ as described
in (\ref{Beta-1}), such that we have $\|Q_{T}^{N}-Q^{\ast}\|_{2}\leq\tilde{\varepsilon}$ with probability at least $1-\delta$, for $N$ satisfying
\[
N=\Omega\left(\left(\frac{2+\Psi-\varepsilon}{1-\varepsilon}\right)^{\frac{1}{\beta_{T}}\ln\frac{V_{\max}\sqrt{|\mathbb{S}||\mathbb{A}|}}{\tilde{\varepsilon}}}\frac{V_{\max}^{2}|\mathbb{S}||\mathbb{A}|\ln(V_{\max}(|\mathbb{S}||\mathbb{A}|)^{3/2}/[\Psi\delta\beta_{T}\tilde{\varepsilon}(1-\varepsilon)])}{\Psi^{2}\beta_{T}\tilde{\varepsilon}^{2}(1-\varepsilon)^{2}}\right),
\]
where $\Psi$ is any positive constant, and for $T$ satisfying conditions (\ref{C1})
and (\ref{C2}).
\end{thm}

In Theorem \ref{Theorem 4.6}, the positive constant $\Psi$ is used
to bound the duration of iteration $m$, which starts at time $\tau_{m}$,
and ends at time $\tau_{m+1}$. Define $C_{G}$ be the upper bound: 
\begin{equation}
\left\{ 1+\frac{C}{\kappa(1-C\,K_{\psi}^{(1)}\kappa)}+\left[K_{G}(\gamma+K_{S}K_{G})\right]^{2}\right\} \|Q_{T}^{n-1}-Q^{\ast}\|_{2}^{2}\leq C_{G}, \label{CG}
\end{equation}
almost surely. Next, we prove convergence in expectation. We define a function 
\begin{align}
f(t):= & \left[K_{\mathcal{Y}}H_{\mathcal{Y}}^{-1}+K_{\mathcal{Z}}H_{\mathcal{Z}}^{-1}\right]\frac{t^{\alpha}}{C\left(t-\tau_{\ast}(t)+1\right)} \nonumber
\\
& +\frac{(K_{\mathcal{Y}}+K_{\mathcal{Z}})L}{\sqrt{t-\tau_{\ast}(t)+1}} \nonumber
\\
& +C(K_{\mathcal{Y}}+K_{\mathcal{Z}})^{2}L^{2}\left[H_{\mathcal{Y}}K_{\mathcal{Y}}+H_{\mathcal{Z}}K_{\mathcal{Z}}\right]\tau_{\ast}^{-\alpha}(t),  \label{f}
\end{align}
for all integers $1\leq t\leq T$. 
\begin{thm}
\label{Theorem 4.7} (Convergence Rate in Expectation) Suppose Assumption
\ref{Assumption 4.5} holds, and set linear learning rate $k=1$. Given $\tilde{\varepsilon}>0$, we have 
$\mathbb{E}\left[\|Q_{T}^{N}-Q^{\ast}\|_{2}|\mathcal{G}_{T+1}^{N-1}\right]\leq\tilde{\varepsilon}$, for $N$ satisfying
\[
N=\Omega\left(\frac{\max\left\{ (C_{G}+(\gamma\,f(T))^{2})\varepsilon/\left((2-2\gamma K_{G})\varepsilon^{2}-K_{G}(\gamma-K_{S}K_{G}-\varepsilon\right),\,C_{\max}^{2}|\mathbb{S}||\mathbb{A}|\right\} }{\tilde{\varepsilon}}\right),
\]
where $C_{G}$ is defined in the inequality (\ref{CG}) and $f(T)$ is defined in Eq. ({\ref{f}}) by choosing $t=T$. 
\end{thm}

In Theorem \ref{Theorem 4.7}, the function $f(t)$ bounds the duality
gap of the stochastic saddle-point estimation in each iteration $t\leq T$,
for a fixed iteration $n\leq N$.

\section{Proofs of main results}

\subsection{Almost sure convergence}

We now present the proof of Theorem \ref{Theorem 4.4} step by step.
\textbf{}\\
\textbf{}\\
\textbf{Step 1: Bounding} $\|(y^{n,t},\,z^{n,t})-(y^{n,\ast},\,z^{n,\ast})\|_{2}^{2}$,
\textbf{for all} $n\leq N$ \textbf{and} $t\leq T$, \textbf{by a
function of} $\|Q_{T}^{n-1}-Q^{\ast}\|_{2}^{2}$.
\begin{lem}
\label{Lemma 5.5-1} Suppose Assumption \ref{Assumption 4.5} holds,
then there exists $0<\kappa<1/\left(C\,K_{\psi}^{(1)}\right)$ such
that
\begin{equation}
\|(y^{n,t},\,z^{n,t})-(y^{n,\ast},\,z^{n,\ast})\|_{2}^{2}\leq\frac{C(\tau_{\ast}(t))^{-\alpha}}{\kappa(1-C(\tau_{\ast}(t))^{-\alpha}K_{\psi}^{(1)}\kappa)}\|Q_{T}^{n-1}-Q^{\ast}\|_{2}^{2},\label{Risk bound}
\end{equation}
for all $t\leq T,\,n\leq N$.
\end{lem}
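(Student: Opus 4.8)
The plan is to obtain (\ref{Risk bound}) by combining three ingredients: the convergence-rate guarantee for the SASP inner loop (Algorithm 2, from \cite{Nemirovski2005}), a quadratic-growth (stability) property of the stage-$n$ saddle-point problem, and the subdifferential perturbation bound of Lemma \ref{Lemma 5.5}. Throughout I fix $(s,a)\in\mathbb{K}$ and condition on $\mathcal{G}_{T}^{n-1}$, so that $Q_{T}^{n-1}$, hence $v^{n-1}$ and the objective $\Phi^{n-1}(y,z):=\mathbb{E}_{s'\sim P(\cdot|s,a)}[G(v^{n-1}(s'),y,z)]$, are frozen. I write $\psi^{n-1}=(\mathbb{E}_{P}G_{y},\,-\mathbb{E}_{P}G_{z})$ for the associated monotone saddle-point operator, whose zero on $\mathcal{Y}\times\mathcal{Z}$ is $(y^{n,\ast},z^{n,\ast})$.

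First I would record the SASP estimate. Since Step 4 is exactly the scheme of \cite[Algorithm 2.1]{Nemirovski2005} with step sizes $\lambda_{t,\alpha}=C\,t^{-\alpha}$ and averaging window $\tau_{\ast}(t)$, the averaged iterate $(y^{n,t},z^{n,t})$ defined in (\ref{gnt}) has a saddle-point residual controlled by the step size at the start of the averaging window, namely of order $C(\tau_{\ast}(t))^{-\alpha}$ times constants built from the uniform subgradient bound $L$ (Assumption \ref{Assumption 3.3}(iv)) and the diameters $K_{\mathcal{Y}},K_{\mathcal{Z}}$. The key point is that the SASP gradient steps are taken with the $v^{n-1}$-operator, so the per-step deviation of $\psi^{n-1}$ from the limiting operator at $v^{\ast}$ is weighted by $\lambda_{t,\alpha}$; this is what will ultimately produce the product structure $C(\tau_{\ast}(t))^{-\alpha}\cdot\|Q_{T}^{n-1}-Q^{\ast}\|_{2}^{2}$ in the final bound.

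Second I would use a quadratic-growth/stability estimate for $\Phi^{n-1}$ with modulus $\kappa$: convexity--concavity (Assumption \ref{Assumption 3.3}(iii)) together with the stability of the saddle point gives
\[
\kappa\,\|(y^{n,t},z^{n,t})-(y^{n,\ast},z^{n,\ast})\|_{2}^{2}\;\le\;\langle\psi^{n-1}(y^{n,t},z^{n,t}),\,(y^{n,t},z^{n,t})-(y^{n,\ast},z^{n,\ast})\rangle .
\]
The crux is to control the right-hand side while accounting for the fact that $\psi^{n-1}$ is built from subgradients evaluated at $v^{n-1}$ rather than at the limiting $v^{\ast}$. Lemma \ref{Lemma 5.5} bounds this discrepancy by $K_{\psi}^{(1)}\|Q_{T}^{n-1}-Q^{\ast}\|_{2}+K_{\psi}^{(2)}\sqrt{\|Q_{T}^{n-1}-Q^{\ast}\|_{2}}$; over the bounded range of the iterates the square-root term is dominated by the linear one and absorbed into constants, so it is precisely this step that injects $\|Q_{T}^{n-1}-Q^{\ast}\|_{2}$ into the estimate.

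Collecting the two previous bounds produces a self-referential inequality of the shape
\[
\kappa\,\|(y^{n,t},z^{n,t})-(y^{n,\ast},z^{n,\ast})\|_{2}^{2}\;\le\;C(\tau_{\ast}(t))^{-\alpha}\Big(\|Q_{T}^{n-1}-Q^{\ast}\|_{2}^{2}+\kappa^{2}K_{\psi}^{(1)}\,\|(y^{n,t},z^{n,t})-(y^{n,\ast},z^{n,\ast})\|_{2}^{2}\Big),
\]
in which the unknown quantity $\|(y^{n,t},z^{n,t})-(y^{n,\ast},z^{n,\ast})\|_{2}^{2}$ appears on both sides. Moving the self term to the left and dividing by $\kappa\big(1-C(\tau_{\ast}(t))^{-\alpha}K_{\psi}^{(1)}\kappa\big)$ yields exactly (\ref{Risk bound}); the constraint $0<\kappa<1/(C\,K_{\psi}^{(1)})$, together with $(\tau_{\ast}(t))^{-\alpha}\le 1$, keeps this denominator strictly positive. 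The main obstacle is the constant bookkeeping needed to close this self-reference --- in particular making the coefficient of the unknown term on the right come out to exactly $C(\tau_{\ast}(t))^{-\alpha}K_{\psi}^{(1)}\kappa^{2}$ --- and upgrading the SASP guarantee (which is natural in expectation) to the pathwise statement asserted for every $t\le T$ and $n\le N$, which the conditioning on $\mathcal{G}_{T}^{n-1}$ is meant to handle.
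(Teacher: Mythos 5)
Your overall architecture matches the paper's: control the SASP inner loop, inject $\|Q_{T}^{n-1}-Q^{\ast}\|_{2}$ through Lemma \ref{Lemma 5.5}, apply $2ab\le a^{2}\kappa+b^{2}/\kappa$, and close a self-referential inequality whose solution produces the denominator $\kappa(1-C(\tau_{\ast}(t))^{-\alpha}K_{\psi}^{(1)}\kappa)$. The genuine gap is your second ingredient. The inequality
\[
\kappa\,\|(y^{n,t},z^{n,t})-(y^{n,\ast},z^{n,\ast})\|_{2}^{2}\;\le\;\langle\psi^{n-1}(y^{n,t},z^{n,t}),\,(y^{n,t},z^{n,t})-(y^{n,\ast},z^{n,\ast})\rangle
\]
is a strong-monotonicity (quadratic-growth) property of the saddle operator with modulus $\kappa$. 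Assumption \ref{Assumption 3.3}(iii) gives only convexity--concavity, hence plain monotonicity $\langle\psi^{n-1}(w)-\psi^{n-1}(w^{\ast}),\,w-w^{\ast}\rangle\ge 0$ with no quadratic lower bound; for the paper's flagship examples (CVaR, OCE with piecewise-linear utility, the functionally coherent risk measure) $G$ is piecewise linear in $(y,z)$, so no such $\kappa>0$ exists. Moreover $(y^{n,\ast},z^{n,\ast})$ is a constrained saddle point, so $\psi^{n-1}$ need not vanish there; it only satisfies a variational inequality on $\mathcal{Y}\times\mathcal{Z}$. In the paper, $\kappa$ plays an entirely different role: it is a free parameter introduced by Young's inequality applied to the cross term $-2\langle w-w^{\ast},\,\lambda_{t,\alpha}\psi\rangle$ in the projection non-expansiveness recursion (\ref{projection operator-1}), constrained only by $0<\kappa<1/(C\,K_{\psi}^{(1)})$ so that the coefficient $1-C(\tau_{\ast}(t))^{-\alpha}K_{\psi}^{(1)}\kappa$ stays positive; no curvature of $G$ is invoked. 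To repair your argument you would either have to add a strong convexity--concavity hypothesis (which the paper deliberately avoids) or redo the step the paper's way: expand $\|(y_{t+1}^{n},z_{t+1}^{n})-(y^{n,\ast},z^{n,\ast})\|_{2}^{2}$ via non-expansiveness of $\Pi_{\mathcal{Y}\times\mathcal{Z}}$, average over the window $[\tau_{\ast}(t),t]$, and let $\kappa$ enter purely as a Young parameter. (Your absorption of the $K_{\psi}^{(2)}\sqrt{\cdot}$ term into constants is no worse than the paper's own treatment, which also discards it without comment.)
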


\begin{proof}
From Eq.  (\ref{SASP1}) in Step 4 of Algorithm 1, we know
\begin{align}
\|(y_{t+1}^{n},\,z_{t+1}^{n})-(y^{n,\ast},\,z^{n,\ast})\|_{2}^{2}= & \Biggl\Vert\prod_{\mathcal{Y}\times\mathcal{Z}}\left((y_{t}^{n},\,z_{t}^{n})-\lambda_{t,\alpha}\psi(v^{n-1},\,y^{n,t},\,z^{n,t})\right)-\prod_{\mathcal{Y}\times\mathcal{Z}}(y^{n,\ast},\,z^{n,\ast})\Biggr\Vert_{2}^{2}\nonumber \\
\leq & \|(y_{t}^{n},\,z_{t}^{n})-(y^{n,\ast},\,z^{n,\ast})-\lambda_{t,\alpha}\psi(v^{n-1},\,y^{n,t},\,z^{n,t})\|_{2}^{2}\nonumber \\
\leq & \|(y_{t}^{n},\,z_{t}^{n})-(y^{n,\ast},\,z^{n,\ast})\|_{2}^{2}+(H_{\mathcal{Y}}^{2}+H_{\mathcal{Z}}^{2})L^{2}C^{2}t^{-2\alpha}\nonumber \\
 & -2Ct^{-\alpha}\left((y_{t}^{n},\,z_{t}^{n})-(y^{n,\ast},\,z^{n,\ast})\right)^{\top}\psi(v^{n-1},\,y^{n,t},\,z^{n,t}),\label{projection operator-1}
\end{align}
where the first inequality follows from non-expansiveness of the projection
operator and the second inequality holds by Assumption \ref{Assumption 3.3}(iv).
Based on Lemma \ref{Lemma 5.5}, we have
\begin{align*}
 & \|\psi(v^{n-1},\,y^{n,t},\,z^{n,t})-\psi(v^{\ast},\,y^{n,t},\,z^{n,t})\|_{2}\leq  \mathfrak{D}(\mathcal{S}_{1}^{n,t},\,\mathcal{S}_{2}^{n,t})\leq K_{\psi}^{(1)}\|Q_{T}^{n-1}-Q^{\ast}\|_{2}+K_{\psi}^{(2)}\sqrt{\|Q_{T}^{n-1}-Q^{\ast}\|_{2}}.
\end{align*}
Take the sum of the terms $Ct^{-\alpha}\left((y_{t}^{n},\,z_{t}^{n})-(y^{n,\ast},\,z^{n,\ast})\right)^{\top}\psi(v^{n-1},\,y^{n,t},\,z^{n,t})$
from $\tau_{\ast}(t)$ to $t$, divide by $\frac{1}{t-\tau_{\ast}(t)+1}$,
to obtain:
\begin{align*}
 & \frac{1}{t-\tau_{\ast}(t)+1}\sum_{\tau=\tau_{\ast}(t)}^{t}\left[C\tau^{-\alpha}\left((y_{\tau}^{n},\,z_{\tau}^{n})-(y^{n,\ast},\,z^{n,\ast})\right)^{\top}\psi(v^{n-1},\,y^{n,t},\,z^{n,t})\right]\\
\text{\ensuremath{\leq}} & C(\tau_{\ast}(t))^{-\alpha}\left((y^{n,t},\,z^{n,t})-(y^{n,\ast},\,z^{n,\ast})\right)^{\top}\left(\psi(v^{n-1},\,y^{n,t},\,z^{n,t})-\psi(v^{\ast},\,y^{n,t},\,z^{n,t})\right)\\
\leq & C(\tau_{\ast}(t))^{-\alpha}\|(y^{n,t},\,z^{n,t})-(y^{n,\ast},\,z^{n,\ast})\|_{2}\|\psi(v^{n-1},\,y^{n,t},\,z^{n,t})-\psi(v^{\ast},\,y^{n,t},\,z^{n,t})\|_{2}\\
\leq & C(\tau_{\ast}(t))^{-\alpha}\|(y^{n,t},\,z^{n,t})-(y^{n,\ast},\,z^{n,\ast})\|_{2}\left(K_{\psi}^{(1)}\|Q_{T}^{n-1}-Q^{\ast}\|_{2}+K_{\psi}^{(2)}\sqrt{\|Q_{T}^{n-1}-Q^{\ast}\|_{2}}\right),
\end{align*}
where the first inequality holds due to Assumption \ref{Assumption 3.3}(iii).
Using the inequality $2ab\leq a^{2}\kappa+b^{2}/\kappa$ for all $\kappa>0$,
we obtain
\begin{align}
 & -2C(\tau_{\ast}(t))^{-\alpha}\left((y^{n,t},\,z^{n,t})-(y^{n,\ast},\,z^{n,\ast})\right)^{\top}\left(\psi(v^{n-1},\,y^{n,t},\,z^{n,t})-\psi(v^{\ast},\,y^{n,t},\,z^{n,t})\right)\nonumber \\
\geq- & C(\tau_{\ast}(t))^{-\alpha}K_{\psi}^{(1)}\|(y^{n,t},\,z^{n,t})-(y^{n,\ast},\,z^{n,\ast})\|_{2}^{2}\,\kappa-C(\tau_{\ast}(t))^{-\alpha}\|Q_{T}^{n-1}-Q^{\ast}\|_{2}^{2}/\kappa\nonumber \\
- & C(\tau_{\ast}(t))^{-\alpha}K_{\psi}^{(2)}\|(y^{n,t},\,z^{n,t})-(y^{n,\ast},\,z^{n,\ast})\|_{2}\sqrt{\|Q_{T}^{n-1}-Q^{\ast}\|_{2}}.\label{Deduction}
\end{align}
By summing the right hand side of (\ref{projection operator-1}) from
$\tau_{\ast}(t)$ to $t$, dividing by $\frac{1}{t-\tau_{\ast}(t)+1}$,
and combining with (\ref{Deduction}) we obtain
\begin{align}
 & \frac{1}{t-\tau_{\ast}(t)+1}\sum_{\tau=\tau_{\ast}(t)}^{t}\left(\|(y_{\tau}^{n},\,z_{\tau}^{n})-(y^{n,\ast},\,z^{n,\ast})\|_{2}^{2}+(H_{\mathcal{Y}}^{2}+H_{\mathcal{Z}}^{2})L^{2}C^{2}\tau^{-2\alpha}\right)\nonumber \\
 & -2C(\tau_{\ast}(t))^{-\alpha}\left((y^{n,t},\,z^{n,t})-(y^{n,\ast},\,z^{n,\ast})\right)^{\top}\psi(v^{n-1},\,y^{n,t},\,z^{n,t})\nonumber \\
\geq & C(\tau_{\ast}(t))^{-\alpha}\|(y^{n,t},\,z^{n,t})-(y^{n,\ast},\,z^{n,\ast})\|_{2}^{2}-2\left((y^{n,t},\,z^{n,t})-(y^{n,\ast},\,z^{n,\ast})\right)^{\top}\psi(v^{n-1},\,y^{n,t},\,z^{n,t})\nonumber \\
\geq & (1-C(\tau_{\ast}(t))^{-\alpha}K_{\psi}^{(1)}\kappa)\|(y^{n,t},\,z^{n,t})-(y^{n,\ast},\,z^{n,\ast})\|_{2}^{2}\nonumber \\
 & -C(\tau_{\ast}(t))^{-\alpha}\|Q_{T}^{n-1}-Q^{\ast}\|_{2}^{2}/\kappa-C(\tau_{\ast}(t))^{-\alpha}K_{\psi}^{(2)}\|(y^{n,t},\,z^{n,t})-(y^{n,\ast},\,z^{n,\ast})\|_{2}\sqrt{\|Q_{T}^{n-1}-Q^{\ast}\|_{2}}.\label{Deduction 2}
\end{align}
Since the term $(1-C(\tau_{\ast}(t))^{-\alpha}K_{\psi}^{(1)}\kappa)\|(y_{t}^{n},\,z_{t}^{n})$ decreases with $\kappa$, while the term $C(\tau_{\ast}(t))^{-\alpha}\|Q_{T}^{n-1}-Q^{\ast}\|_{2}^{2}/\kappa$ increases with $\kappa$. We further claim that we can choose $\kappa$ satisfying $0<\kappa<1/(C\,K_{\psi}^{(1)})$
such that
\begin{equation}
(1-C(\tau_{\ast}(t))^{-\alpha}K_{\psi}^{(1)}\kappa)\|(y_{t}^{n},\,z_{t}^{n})-(y^{n,\ast},\,z^{n,\ast})\|_{2}^{2}-C(\tau_{\ast}(t))^{-\alpha}\|Q_{T}^{n-1}-Q^{\ast}\|_{2}^{2}/\kappa\leq0,\label{Negative}
\end{equation}
which gives the desired result.
\end{proof}
\textbf{Step 2: Bounding} $\|(y^{\ast},\,z^{\ast})-(y^{n,\ast},\,z^{n,\ast})\|_{2}$,
\textbf{for all $n\leq N$, by a function of} $\|Q_{T}^{n-1}-Q^{\ast}\|_{2}$\textbf{.
}We first present a well known inequality.
\begin{fact}
\label{Fact 5.2} Given two proper functions $f_{1},\,f_{2}:\,\mathcal{X}\rightarrow\mathbb{R}$,
\[
|\max_{x\in\mathcal{X}}f_{1}(x)-\max_{x\in\mathcal{X}}f_{2}(x)|\leq\max_{x\in\mathcal{X}}|f_{1}(x)-f_{2}(x)|
\]
and
\[
|\min_{x\in\mathcal{X}}f_{1}(x)-\min_{x\in\mathcal{X}}f_{2}(x)|\leq\max_{x\in\mathcal{X}}|f_{1}(x)-f_{2}(x)|.
\]
\end{fact}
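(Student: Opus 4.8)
The plan is to establish both inequalities by the standard ``pointwise domination, then take the extremum, then symmetrize'' argument; no heavy machinery is required. Throughout, write $M:=\max_{x\in\mathcal{X}}|f_{1}(x)-f_{2}(x)|$ for the common right-hand side.

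For the maximum, I would first observe that for every $x\in\mathcal{X}$,
\[
f_{1}(x)\leq f_{2}(x)+|f_{1}(x)-f_{2}(x)|\leq\max_{x^{\prime}\in\mathcal{X}}f_{2}(x^{\prime})+M.
\]
Since the right-hand side no longer depends on $x$, taking the maximum over $x\in\mathcal{X}$ on the left gives $\max_{x\in\mathcal{X}}f_{1}(x)\leq\max_{x\in\mathcal{X}}f_{2}(x)+M$, that is, $\max_{x}f_{1}(x)-\max_{x}f_{2}(x)\leq M$. Interchanging the roles of $f_{1}$ and $f_{2}$ (which leaves $M$ unchanged) yields the reverse bound $\max_{x}f_{2}(x)-\max_{x}f_{1}(x)\leq M$, and the two together are precisely $|\max_{x}f_{1}-\max_{x}f_{2}|\leq M$.

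For the minimum, I would run the symmetric argument. Letting $x^{\ast}\in\mathcal{X}$ attain $\min_{x}f_{2}(x)$, we have
\[
\min_{x\in\mathcal{X}}f_{1}(x)\leq f_{1}(x^{\ast})\leq f_{2}(x^{\ast})+M=\min_{x\in\mathcal{X}}f_{2}(x)+M,
\]
so $\min_{x}f_{1}-\min_{x}f_{2}\leq M$, and swapping $f_{1}$ and $f_{2}$ again supplies the matching lower bound. Alternatively, the minimum inequality follows from the maximum inequality applied to $-f_{1}$ and $-f_{2}$, using $\min_{x}g(x)=-\max_{x}(-g(x))$ together with $|(-f_{1})-(-f_{2})|=|f_{1}-f_{2}|$.

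There is essentially no obstacle here; the only point requiring minor care is the attainment of the extrema, so that $\max$ and $\min$ are well defined. This is immediate in our setting, where $\mathcal{X}$ is the relevant compact feasible set, so a maximizer and minimizer always exist. In full generality one simply replaces $\max$ and $\min$ by $\sup$ and $\inf$, and the identical inequalities hold verbatim after a routine $\epsilon$-argument.
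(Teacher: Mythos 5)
Your proof is correct. The paper itself offers no proof of Fact \ref{Fact 5.2} --- it is stated as a ``well known inequality'' and used directly --- and your argument (pointwise domination, take the extremum, symmetrize, with the minimum case either argued directly or reduced to the maximum case via $\min_{x}g=-\max_{x}(-g)$) is exactly the standard one, including the sensible remark that one passes to $\sup$/$\inf$ when the extrema are not attained.
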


In particular, Fact \ref{Fact 5.2} implies that
\begin{equation}
|\min_{a}Q_{T}^{n-1}(s,\,a)-\min_{a}Q^{\ast}(s,\,a)|\leq\|Q_{T}^{n-1}-Q^{\ast}\|_{\infty}\leq\|Q_{T}^{n-1}-Q^{\ast}\|_{2},\label{Q-fact}
\end{equation}
for all $s\in\mathbb{S}$. 
\begin{lem}
\label{Saddle convergence} Suppose Assumption \ref{Assumption 3.3}
holds, then, for all $n\leq N$, we have
\begin{equation}
\|(y^{\ast},\,z^{\ast})-(y^{n,\ast},\,z^{n,\ast})\|_{2}\leq K_{S}\,K_{G}\|Q_{T}^{n-1}-Q^{\ast}\|_{2}.\label{Relationship}
\end{equation}
\end{lem}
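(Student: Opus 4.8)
The plan is to reduce the displacement between the two saddle points to a \emph{uniform} perturbation of the underlying saddle functions, and then to absorb the passage from that objective perturbation back to the argument displacement into the stability modulus $K_S$. Write $\Phi^{n-1}(y,z):=\mathbb{E}_{s'\sim P(\cdot|s,a)}[G(v^{n-1}(s'),y,z)]$ and $\Phi^{*}(y,z):=\mathbb{E}_{s'\sim P(\cdot|s,a)}[G(v^{*}(s'),y,z)]$, so that $(y^{n,\ast},z^{n,\ast})$ and $(y^{\ast},z^{\ast})$ are the saddle points of $\Phi^{n-1}$ and $\Phi^{*}$ over the compact set $\mathcal{Y}\times\mathcal{Z}$. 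First I would control the gap between the two objectives. By inequality (\ref{Q-fact}) we have $|v^{n-1}(s')-v^{\ast}(s')|=|\min_a Q_T^{n-1}(s',a)-\min_a Q^{\ast}(s',a)|\le\|Q_T^{n-1}-Q^{\ast}\|_2$ for every $s'\in\mathbb{S}$; combining this with the Lipschitz continuity of $G$ in its first argument (Assumption \ref{Assumption 3.3}(ii), constant $K_G$) yields $|G(v^{n-1}(s'),y,z)-G(v^{\ast}(s'),y,z)|\le K_G\|Q_T^{n-1}-Q^{\ast}\|_2$ for all $(y,z)\in\mathcal{Y}\times\mathcal{Z}$ and all $s'$. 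Taking expectations over $s'\sim P(\cdot|s,a)$ preserves the bound, so $\sup_{(y,z)\in\mathcal{Y}\times\mathcal{Z}}|\Phi^{n-1}(y,z)-\Phi^{*}(y,z)|\le K_G\|Q_T^{n-1}-Q^{\ast}\|_2$.

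Next I would convert this uniform objective gap into a bound on the distance between saddle points through the primal--dual gap function $\mathfrak{g}_{\Phi}(y,z):=\max_{z'\in\mathcal{Z}}\Phi(y,z')-\min_{y'\in\mathcal{Y}}\Phi(y',z)$, which is nonnegative on $\mathcal{Y}\times\mathcal{Z}$ and vanishes exactly at a saddle point of $\Phi$. Applying Fact \ref{Fact 5.2} to the inner $\max$ and the inner $\min$ separately shows that $|\mathfrak{g}_{\Phi^{n-1}}(y,z)-\mathfrak{g}_{\Phi^{*}}(y,z)|\le 2\sup_{(y,z)}|\Phi^{n-1}-\Phi^{*}|\le 2K_G\|Q_T^{n-1}-Q^{\ast}\|_2$. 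Since $(y^{\ast},z^{\ast})$ is the saddle point of $\Phi^{*}$ we have $\mathfrak{g}_{\Phi^{*}}(y^{\ast},z^{\ast})=0$, and therefore $\mathfrak{g}_{\Phi^{n-1}}(y^{\ast},z^{\ast})\le 2K_G\|Q_T^{n-1}-Q^{\ast}\|_2$. It then remains only to turn the smallness of this gap at $(y^{\ast},z^{\ast})$ into closeness to the true saddle point $(y^{n,\ast},z^{n,\ast})$ of $\Phi^{n-1}$.

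This final step is where the stability modulus $K_S$ enters, and it is the crux of the argument. The quantity $K_S$ encodes a sharpness (linear growth) property of the gap function about its saddle point, namely that $\mathfrak{g}_{\Phi^{n-1}}(y,z)\ge (2/K_S)\,\|(y,z)-(y^{n,\ast},z^{n,\ast})\|_2$ holds uniformly in $n$; granting this, evaluating at $(y^{\ast},z^{\ast})$ and chaining with the previous bound gives $\|(y^{\ast},z^{\ast})-(y^{n,\ast},z^{n,\ast})\|_2\le K_S K_G\|Q_T^{n-1}-Q^{\ast}\|_2$, which is exactly (\ref{Relationship}). The hard part will be justifying such a uniform growth constant: under mere convexity--concavity of $G$ (Assumption \ref{Assumption 3.3}(iii)) the gap function need only grow \emph{quadratically} near the saddle point, which would force a square-root dependence on $\|Q_T^{n-1}-Q^{\ast}\|_2$ and weaken the downstream convergence rate. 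The linear bound claimed here therefore rests on treating $K_S$, the stability modulus of the saddle point, as a standing regularity assumption on the family $\{\Phi^{n-1}\}_{n\le N}$; I would state this growth condition explicitly and use the compactness of $\mathcal{Y}\times\mathcal{Z}$ together with the uniform subgradient bounds of Assumption \ref{Assumption 3.3}(iv) to argue that the constant can be taken independent of $n$.
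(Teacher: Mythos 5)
Your proposal is correct in substance and shares the paper's overall architecture -- both arguments reduce the claim to (a) a uniform perturbation bound $\sup_{(y,z)}|\Phi^{n-1}(y,z)-\Phi^{*}(y,z)|\leq K_{G}\|Q_{T}^{n-1}-Q^{\ast}\|_{2}$ obtained from (\ref{Q-fact}) and the Lipschitz continuity of $G$ in Assumption \ref{Assumption 3.3}(ii), and (b) a stability modulus $K_{S}$ that converts an objective-level gap back into an argument-level distance. Where you differ is in the vehicle for step (b). The paper writes the chain directly on objective values: it bounds $\|(y^{\ast},z^{\ast})-(y^{n,\ast},z^{n,\ast})\|_{2}$ by $K_{S}$ times the difference of $\Phi^{n-1}$ evaluated at the two saddle points, citing stability-of-optimal-solutions results, and then pushes that difference through Fact \ref{Fact 5.2} to reach the uniform perturbation bound. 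You instead route through the primal--dual gap function $\mathfrak{g}_{\Phi}(y,z)=\max_{z'}\Phi(y,z')-\min_{y'}\Phi(y',z)$, show $\mathfrak{g}_{\Phi^{n-1}}(y^{\ast},z^{\ast})\leq 2K_{G}\|Q_{T}^{n-1}-Q^{\ast}\|_{2}$, and invoke a linear growth condition of $\mathfrak{g}_{\Phi^{n-1}}$ about its saddle point. Your formulation is arguably the cleaner one: the raw difference $\Phi^{n-1}(y^{\ast},z^{\ast})-\Phi^{n-1}(y^{n,\ast},z^{n,\ast})$ used in the paper can vanish or change sign even when the two points are far apart (a saddle value is neither a maximum nor a minimum over the joint set), whereas the gap function is nonnegative and vanishes exactly at saddle points, so it is the natural merit function for a stability estimate.

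The caveat you raise at the end is the right one, and it applies equally to the paper's own proof: under mere convexity--concavity (Assumption \ref{Assumption 3.3}(iii)) the gap function generically grows only quadratically near the saddle point, so a linear bound of the form (\ref{Relationship}) requires a sharpness or metric-regularity hypothesis on the family $\{\Phi^{n-1}\}$ that neither convexity nor the compactness of $\mathcal{Y}\times\mathcal{Z}$ supplies on its own. The paper absorbs this into the definition of $K_{S}$ as ``the modulus of the stability of the saddle-points'' and defers to the cited stability results; you make the needed growth condition explicit as a standing assumption. Since the existence of $K_{S}$ is effectively postulated in both treatments, your proof is not weaker than the paper's -- but you should state the growth condition as an assumption rather than suggest it can be derived from Assumption \ref{Assumption 3.3} alone, and note the harmless factor of $2$ you fold into $K_{S}$.
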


\begin{proof}
It can be shown that
\begin{align*}
 & \|(y^{\ast},\,z^{\ast})-(y^{n,\ast},\,z^{n,\ast})\|_{2}\\
\leq & K_{S}\left\Vert \mathbb{E}_{s^{\prime}\sim P(\cdot|s,a)}\left[G\left(v^{n-1}(s^{\prime}),\,y^{\ast}(s,\,a),\,z^{\ast}(s,\,a)\right)\right]-\mathbb{E}_{s^{\prime}\sim P(\cdot|s,a)}\left[G\left(v^{n-1}(s^{\prime}),\,y^{n,\ast}(s,\,a),\,z^{n,\ast}(s,\,a)\right)\right]\right\Vert _{2}\\
\leq & K_{S}\max_{z\in\mathcal{Z}}\left\Vert \min_{y\in\mathcal{Y}}\mathbb{E}_{s^{\prime}\sim P(\cdot|s,a)}\left[G\left(v^{n-1}(s^{\prime}),\,y(s,\,a),\,z(s,\,a)\right)\right]-\min_{y\in\mathcal{Y}}\mathbb{E}_{s^{\prime}\sim P(\cdot|s,a)}\left[G\left(v^{\ast}(s^{\prime}),\,y(s,\,a),\,z(s,\,a)\right)\right]\right\Vert _{2}\\
\leq & K_{S}\max_{y\in\mathcal{Y},\,z\in\mathcal{Z}}\left\Vert \mathbb{E}_{s^{\prime}\sim P(\cdot|s,a)}\left[G\left(v^{n-1}(s^{\prime}),\,y(s,\,a),\,z(s,\,a)\right)\right]-\mathbb{E}_{s^{\prime}\sim P(\cdot|s,a)}\left[G\left(v^{\ast}(s^{\prime}),\,y(s,\,a),\,z(s,\,a)\right)\right]\right\Vert _{2}\\
\leq & K_{S}\,K_{G}\|\min_{a}Q_{t}^{n-1}(s,\,a)-\min_{a}Q^{\ast}(s,\,a)\|_{2}\\
\leq & K_{S}\,K_{G}\|Q_{t}^{n-1}-Q^{\ast}\|_{2},
\end{align*}
where the first inequality follows from Lemma \ref{Lemma 4.7}, the
second inequalities holds due to Fact \ref{Fact 5.2}, the
third inequality holds by Lipschitz continuity of $G$ under Assumption
\ref{Assumption 3.3}(ii), fourth inequality holds based on the Lipschitz continuity of function $G$ on $\mathcal{V}\times\mathcal{Y}\times\mathcal{Z}$, and the last inequality holds by inequality (\ref{Q-fact}). 
\end{proof}
\textbf{Step 3: Apply the stochastic approximation convergence theorem.}
This step completes the proof of Theorem \ref{Theorem 4.4}. We introduce
an operator $H:\,\mathcal{V}\times\mathcal{Y}\times\mathcal{Z}\rightarrow\mathbb{R}^{|\mathbb{S}||\mathbb{A}|}$
defined via
\[
H(v,\,y,\,z)(s,\,a):=c(s,\,a)+\gamma G\left(v(s^{\prime}),y(s,\,a),\,z(s,\,a)\right),\,s^{\prime}\sim P(\cdot|s,a),
\]
for all $(s,\,a)\in\mathbb{K}$. Eq. (\ref{Q-risk}) may then be written
as
\begin{equation}
Q^{\ast}(s,\,a)=\mathbb{E}_{s^{\prime}\sim P(\cdot|s,a)}\left[H\left(v^{\ast},\,y^{\ast},\,z^{\ast}\right)\right](s,\,a),\,\forall(s,\,a)\in\mathbb{K}.\label{stable}
\end{equation}
Next we define two stochastic processes, 
\begin{align*}
\epsilon_{t}^{n}(s,\,a) & :=H\left(v^{n-1},\,y^{n,\ast},\,z^{n,\ast}\right)(s,\,a)-H\left(v^{n-1},\,y^{n,t},\,z^{n,t}\right)(s,\,a),\\
\xi_{t}^{n}(s,\,a) & :=H\left(v^{\ast}\,y^{\ast},\,z^{\ast}\right)(s,\,a)-H\left(v^{n-1},\,y^{n,\ast},\,z^{n,\ast}\right)(s,\,a),
\end{align*}
for all $(s,\,a)\in\mathbb{K}$, $t\leq T$, and $n\leq N$, where
$\epsilon_{t}^{n}$ corresponds to the risk estimation error (i.e.
the duality gap of the stochastic saddle-point problem) and $\xi_{t}^{n}$
corresponds to the error between $Q_{T}^{n}$ and $Q^{\ast}$. Therefore,
Step 3 in Algorithm 1 is equivalent to
\begin{align}
 & Q_{t}^{n}(s,\,a)-Q_{T}^{n-1}(s,\,a)\nonumber \\
= & -\text{\ensuremath{\theta}}_{k}^{n}\left[Q_{T}^{n-1}(s,\,a)-Q^{\ast}(s,\,a)+\xi_{t}^{n}(s,\,a)+\epsilon_{t}^{n}(s,\,a)\right.\left.+Q^{\ast}(s,\,a)-H\left(v^{\ast},\,y^{\ast},\,z^{\ast}\right)(s,\,a)\right],\label{Update}
\end{align}
for all $(s,\,a)\in\mathbb{K}$, $t\leq T$, and $n\leq N$. Clearly,
$\mathbb{E}\left[Q^{\ast}(s,\,a)-H\left(v^{\ast},\,y^{\ast},\,z^{\ast}(s,\,a)\right)|\mathcal{G}_{t+1}^{n-1}\right]=0$
for all $(s,\,a)\in\mathbb{K}$ by (\ref{stable}). By Lemma \ref{Lemma 5.5-1},
we know that
\begin{equation}
\mathbb{E}\left[\|\epsilon_{t}^{n}(s,\,a)\|_{2}^{2}|\mathcal{G}_{t+1}^{n-1}\right]\leq\frac{C\,K_{G}}{\kappa(1-C\,K_{\psi}^{(1)}\kappa)}\|Q_{T}^{n-1}-Q^{\ast}\|_{2}^{2},\,\forall(s,\,a)\in\mathbb{K}.\label{Boundedness}
\end{equation}
In particular, inequality (\ref{Boundedness}) holds by setting $t=1$
in (\ref{Risk bound}). Furthermore, inequality (\ref{Boundedness}) shows that
the conditional expectation w.r.t. $\mathcal{G}_{t+1}^{n-1}$ of the
risk estimation error for each state-action pair at each iteration is
always bounded by $\|Q_{T}^{n-1}-Q^{\ast}\|_{2}^{2}$. In addition,
by Lipschitz continuity of $G$, we have
\begin{align}
|\xi_{t}^{n}(s,\,a)|\leq & \gamma\,K_{G}|\min_{a\in\mathbb{A}}Q_{T}^{n-1}(s_{t}^{n},\,a)-\min_{a\in\mathbb{A}}Q^{\ast}(s_{t}^{n},\,a)| +\gamma K_{G}|(y^{\ast}(s,\,a),\,z^{\ast}(s,\,a))-(y^{n,\ast}(s,\,a),\,z^{n,\ast}(s,\,a))|\nonumber \\
\leq & \gamma K_{G}\left[\|Q_{T}^{n-1}-Q^{\ast}\|_{2}+\|(y^{\ast},\,z^{\ast})-(y^{n,\ast},\,z^{n,\ast})\|_{2}\right]\nonumber \\
\leq & \gamma K_{G}(1+K_{G}K_{S})\|Q_{T}^{n-1}-Q^{\ast}\|_{2},\label{Induction}
\end{align}
where the final inequality holds due to Lemma \ref{Saddle convergence}.
Since $G$ is $P$-square summable for every $y\in\mathcal{Y}$ and
$z\in\text{\ensuremath{\mathcal{Z}}}$ in Assumption \ref{Assumption 3.3}(i),
the risk measure (\ref{Minimax Multiple}) is bounded for all $(s,\,a)\in\mathbb{K}$.
Furthermore, based on (\ref{Q-risk O}) and boundedness of the cost
function $c(s,\,a)$ by Assumption \ref{Assumption 2.1}(ii), we have
boundedness of $Q_{t}^{n}$ for all $t\le T,\,n\leq N$. Boundedness
of the $Q$-values together with results (\ref{Boundedness}) and
(\ref{Induction}), along with the equality $\mathbb{E}\left[Q^{\ast}-H\left(v^{\ast},\,y^{\ast},\,z^{\ast}\right)|\mathcal{G}_{t+1}^{n-1}\right]=0$,
mean that the update rule (\ref{Update}) satisfies the conditions
of \cite[Theorem 2.4]{Kushner2003} and \cite[Assumption A2]{Borkar2000}.
We now have the ingredients needed to apply the stochastic approximation
convergence theorem (see e.g. \cite[Theorem 2.4]{Kushner2003} or
\cite[Theorem 2.2]{Borkar2000}) to Eq. (\ref{Update}) to conclude
that
\[
Q_{T}^{n}(s,\,a)\rightarrow Q^{\ast}(s,\,a)\quad a.s.,
\]
for all $(s,\,a)\in\mathbb{K}$ as $n\rightarrow\infty$.
\begin{rem}
In terms of RaQL with random costs, the result of Lemma \ref{Lemma 5.5}
holds for the modified subdifferentials of the function $G$ with
respect to $c(s_{t}^{n},\,a^{n},\,x_{t}^{n})+\gamma\,v^{n-1}(s_{t}^{n},\,a^{n})$
and $c(s_{t}^{n},\,a^{n},\,x_{t}^{n})+\gamma\,v^{\ast}(s_{t}^{n},\,a^{n})$,
based on (\ref{new q}). We can then follow Steps 1 and 2 to bound $\|(y^{n,t},\,z^{n,t})-(y^{n,\ast},\,z^{n,\ast})\|_{2}^{2}$
and $\|(y^{\ast},\,z^{\ast})-(y^{n,\ast},\,z^{n,\ast})\|_{2}$ in
terms of $\|Q_{T}^{n-1}-Q^{\ast}\|_{2}$. Finally, we can again apply
the stochastic approximation convergence theorem \cite[Theorem 2.4]{Kushner2003}
or \cite[Theorem 2.2]{Borkar2000} to prove almost sure convergence
of RaQL for random costs.
\end{rem}

\subsection{Convergence rate}

In this subsection we derive the convergence rate of RaQL for a polynomial
learning rate $k\in(1/2,\,1)$. Our convergence rate proof follows
\cite{Even-Dar2004}. The main idea is to connect the convergence
of RaQL with the convergence of an artificial deterministic sequence,
which has a linear convergence rate that is easier to derive. In other words, the values $\{Q^{n}_{T}\}_{n\geq 1}$ could be bounded by a deterministic sequence almost surely in each iteration. Explicitly,
we will construct $0<\beta_{T}<1$ and an artificial deterministic
sequence $\{D_{m}\}_{m\geq1}$ satisfying $D_{0}=D_{1}=V_{\max}$
and $D_{m+1}=(1-\beta_{T})D_{m}$ for all $m\geq1$. Here we call $m$ as ``epoch''. Clearly, the
sequence $\{D_{m}\}_{m\geq1}$ converges to zero. This sequence also
has the following special property: for every $m\geq1$, there exists
$\tau_{m}$ such that $\|Q_{T}^{n}-Q^{*}\|_{2}\leq D_{m}$ holds for
all $n\geq\tau_{m}$. The duration of epoch $m$ is then $\tau_{m+1}-\tau_{m}$. Subsequently, we show that $Q_{t}^{n}(s,\,a)-Q^{*}(s,\,a)$
is bounded by two simpler stochastic processes $\{Z_{t}^{n,\tau_{m}}(s,\,a)\}_{\tau_{m}\leq n\leq N}$
and $\{Y_{t}^{n,\tau_{m}}(s,\,a)\}_{\tau_{m}\leq n\leq N}$. We then
establish the relationship of $\{Z_{t}^{n,\tau_{m}}(s,\,a)\}_{\tau_{m}\leq n\leq N}$
and $\{Y_{t}^{n,\tau_{m}}(s,\,a)\}_{\tau_{m}\leq n\leq N}$ with $\{D_{m}\}_{m\geq1}$.
In particular, $\{Z_{t}^{l,\tau_{m}}(s,\,a)\}_{l=1,...,n}$ is a martingale
difference sequence so we can derive a high probability bound on $\{Z_{t}^{n,\tau_{m}}(s,\,a)\}_{\tau_{m}\leq n\leq N}$
from the Azuma-Hoeffding inequality. On the other hand, $\{Y_{t}^{n,\tau_{m}}(s,\,a)\}_{\tau_{m}\leq n\leq N}$
captures all of the biased estimation error terms (from the risk estimation
error and the $Q$-value estimation error) in RaQL, which can be bounded
almost surely by a function of $\{D_{m}\}_{m\geq1}$. By combining
these two results, we show that $\|Q_{T}^{N}-Q^{\ast}\|_{2}\leq\tilde{\varepsilon}$
holds with high probability for large enough $N$ and any $T\geq 1$. 

We will verify the existence and provide the explicit forms of: $D_{m}$,
$\beta_{T}$, and $\tau_{m}$ in the upcoming steps.\\
\\
\textbf{Step 1: Constructing two stochastic processes and bounding
$\|Q_{t}^{n}-Q^{*}\|_{2}$ by their sum.} We decompose (\ref{Update})
into two separate stochastic processes $\{Z_{t}^{n,\tau_{m}}(s,\,a)\}_{\tau_{m}\leq n\leq N}$
and $\{Y_{t}^{n,\tau_{m}}(s,\,a)\}_{\tau_{m}\leq n\leq N}$. We define, for a fixed $m\geq 1$ and 
for $n\geq\tau_{m}$ and $t\leq T$, the quantity
\begin{align}
Z_{t}^{n+1,\tau_{m}}(s,\,a):= & (1-\text{\ensuremath{\theta}}_{k}^{n}(s,\,a))Z_{t}^{n,\tau_{m}}(s,\,a)+\text{\ensuremath{\theta}}_{k}^{n}(s,\,a)\zeta(s,\,a),\label{Process W-1}
\end{align}
for all $(s,\,a)\in\mathbb{K}$, where $\zeta(s,\,a):=Q^{\ast}(s,\,a)-H\left(v^{\ast},\,y^{\ast},\,z^{\ast}\right)(s,\,a)$
and $Z_{t}^{\tau_{m},\tau_{m}}=0,$ for all $t\leq T$. We also define, for a fixed $m\geq 1$ and for $n\geq\tau_{m}$ and $t\leq T$, the quantity

\begin{align}
Y_{t}^{n+1,\tau_{m}}(s,\,a):= & (1-\text{\ensuremath{\theta}}_{k}^{n}(s,\,a))Y_{t}^{n,\tau_{m}}(s,\,a)+\text{\ensuremath{\theta}}_{k}^{n}(s,\,a)K_{G}\gamma\left(D_{m}+\|(y^{\ast},\,z^{\ast})-(y^{n,\ast},\,z^{n,\ast})\|_{2}\right)\nonumber \\
 & +\text{\ensuremath{\theta}}_{k}^{n}(s,\,a)K_{G}\|(y^{n,t},\,z^{n,t})-(y^{n,\ast},\,z^{n,\ast})\|_{2},\label{Process Y-1}
\end{align}
for all $(s,\,a)\in\mathbb{K}$, where $Y_{t}^{\tau_{m},\tau_{m}}(s,\,a)=D_{m},$
for all $t\leq T$. The process (\ref{Process W-1}) is a recursion
for the unbiased error terms $\zeta(s,\,a)$, while the process (\ref{Process Y-1})
is a recursion for the biased error terms (e.g. the sum of the $Q$-value
approximation errors and the duality gaps from the risk estimation).
The following Lemma \ref{Lemma 5.5-2} which appears in \cite{Bertsekas1996}
and \cite[Lemma 9]{Even-Dar2004} shows that the almost sure lower and upper bounds for  $Q$-value estimation
error at each iteration by the process (\ref{Process Y-1}) and (\ref{Process W-1}).
\begin{lem}
\label{Lemma 5.5-2} Given the stochastic processes (\ref{Process W-1})
and (\ref{Process Y-1}), and the update rule (\ref{Update}),
\begin{align}
Z_{t}^{n,\tau_{m}}(s,\,a)-Y_{t}^{n,\tau_{m}}(s,\,a)\leq Q_{t}^{n}(s,\,a)-Q^{*}(s,\,a)\leq & Z_{t}^{n,\tau_{m}}(s,\,a)+Y_{t}^{n,\tau_{m}}(s,\,a),\label{Induction-2}
\end{align}
holds for all $t\leq T,\,\tau_{m}\leq n\leq N$, and $(s,\,a)\in\mathbb{K}$.
\end{lem}

\begin{proof}
Suppose both $\epsilon_{t}^{n}(s,\,a)$ and $\xi_{t}^{n}(s,\,a)$
are non-negative for all $(s,\,a)\in\mathbb{K}$. From the right hand
side of Eq. (\ref{Process Y-1}), for all $t\leq T,\,n\leq N$, we have
\begin{align}
 & (1-\text{\ensuremath{\theta}}_{k}^{n}(s,\,a))Y_{t}^{n,\tau_{m}}(s,\,a)+\text{\ensuremath{\theta}}_{k}^{n}(s,\,a)K_{G}\gamma\left(D_{m}+\|(y^{\ast},\,z^{\ast})-(y^{n,\ast},\,z^{n,\ast})\|_{2}\right)\nonumber \\
 & +\text{\ensuremath{\theta}}_{k}^{n}(s,\,a)K_{G}\|(y^{n,t},\,z^{n,t})-(y^{n,\ast},\,z^{n,\ast})\|_{2}\nonumber \\
\geq & (1-\text{\ensuremath{\theta}}_{k}^{n}(s,\,a))Y_{t}^{n,\tau_{m}}(s,\,a)+\text{\ensuremath{\theta}}_{k}^{n}(s,\,a)\left(|\xi_{t}^{n}(s,\,a)|+|\epsilon_{t}^{n}(s,\,a)|\right)\nonumber \\
= & (1-\text{\ensuremath{\theta}}_{k}^{n}(s,\,a))Y_{t}^{n,\tau_{m}}(s,\,a)+\text{\ensuremath{\theta}}_{k}^{n}(s,\,a)\left(\xi_{t}^{n}(s,\,a)+\epsilon_{t}^{n}(s,\,a)\right),\label{New Y}
\end{align}
where this inequality is due to inequality (\ref{Induction}) ($|\xi_{t}^{n}(s,\,a)|\leq\gamma K_{G}\left[\|Q_{T}^{n-1}-Q^{\ast}\|_{2}+\|(y^{\ast},\,z^{\ast})-(y^{n,\ast},\,z^{n,\ast})\|_{2}\right]$),
the definition of $\epsilon_{t}^{n}(s,\,a)$, and Lipschitz continuity
of $G$ from Assumption \ref{Assumption 3.3}(ii) ($|\epsilon_{t}^{n}(s,\,a)|\leq K_{G}\|(y^{n,t},\,z^{n,t})-(y^{n,\ast},\,z^{n,\ast})\|_{2}$).
Combining inequality (\ref{New Y}) with Eqs. (\ref{Process Y-1}) and (\ref{Process W-1}),
for all $t\leq T,\,n\leq N$, and $(s,\,a)\in\mathbb{K}$, we have
\begin{align}
 & Z_{t}^{n+1,\tau_{m}}(s,\,a)+Y_{t}^{n+1,\tau_{m}}(s,\,a)\nonumber \\
\geq & (1-\text{\ensuremath{\theta}}_{k}^{n}(s,\,a))\left(Z_{t}^{n,\tau_{m}}(s,\,a)+Y_{t}^{n,\tau_{m}}(s,\,a)\right)\nonumber \\
 & +\text{\ensuremath{\theta}}_{k}^{n}(s,\,a)\left[\xi_{t}^{n}(s,\,a)+\epsilon_{t}^{n}(s,\,a)+Q^{\ast}(s,\,a)-H\left(v^{\ast},\,y^{\ast},\,z^{\ast}\right)(s,\,a)\right].\label{Induction-1}
\end{align}
We now use induction on $n$ to show that the right-hand side
in inequalities (\ref{Induction-2}) holds. By setting the base case to be $n=\tau_{m}$,
we have, for all $t\leq T$ and $(s,\,a)\in\mathbb{K}$, that
\[
Z_{t}^{\tau_{m},\tau_{m}}(s,\,a)+Y_{t}^{\tau_{m},\tau_{m}}(s,\,a)=D_{m}\geq\|Q_{t}^{\tau_{m}}-Q^{*}\|_{2}\geq Q_{t}^{n}(s,\,a)-Q^{*}(s,\,a).
\]
The above equality holds by the definition of $Z_{t}^{n}(s,\,a)$
and $Y_{t}^{n}(s,\,a)$ in Eqs. (\ref{Process W-1}) and (\ref{Process Y-1}),
respectively. Suppose $Z_{t}^{n,\tau_{m}}(s,\,a)+Y_{t}^{n,\tau_{m}}(s,\,a)\geq Q_{t}^{n}(s,\,a)-Q^{*}(s,\,a)$
for all $\tau_{m}\leq n\leq N$, $t\leq T$, and $(s,\,a)\in\mathbb{K}$.
Then, by inequality (\ref{Induction-1}) and Eq. (\ref{Update}), we obtain the right
hand side of the above inequality. Now, suppose both $\epsilon_{t}^{n}(s,\,a)$
and $\xi_{t}^{n}(s,\,a)$ are negative for all $(s,\,a)\in\mathbb{K}$,
then
\begin{align}
 & -(1-\text{\ensuremath{\theta}}_{k}^{n}(s,\,a))Y_{t}^{n,\tau_{m}}(s,\,a)+\text{\ensuremath{\theta}}_{k}^{n}(s,\,a)K_{G}\gamma\left(-D_{m}-\|(y^{\ast},\,z^{\ast})-(y^{n,\ast},\,z^{n,\ast})\|_{2}\right)\nonumber \\
 & -\text{\ensuremath{\theta}}_{k}^{n}(s,\,a)K_{G}\|(y^{n,t},\,z^{n,t})-(y^{n,\ast},\,z^{n,\ast})\|_{2}\nonumber \\
\leq & -(1-\text{\ensuremath{\theta}}_{k}^{n}(s,\,a))Y_{t}^{n,\tau_{m}}(s,\,a)+\text{\ensuremath{\theta}}_{k}^{n}(s,\,a)(\xi_{t}^{n}(s,\,a)+\epsilon_{t}^{n}(s,\,a)),\label{New Y-2}
\end{align}
and so we obtain the left hand inequality in inequalities (\ref{Induction-2})
following the same reasoning. Finally, when $\epsilon_{t}^{n}(s,\,a)$
and $\xi_{t}^{n}(s,\,a)$ have different signs for all $(s,\,a)\in\mathbb{K}$,
we can show that inequalities (\ref{New Y}) and (\ref{New Y-2})
hold. Thus, following the same inductive reasoning, we can show that
both inequalities in (\ref{Induction-2}) hold.
\end{proof}
Our main focus is on deriving a high probability bound for the convergence
rate of $\|Q_{T}^{n}-Q^{*}\|_{2}$. By Lemma (\ref{Lemma 5.5-2}),
this goal is equivalent to bounding the sum and the difference of
the stochastic processes $\{Z_{t}^{n,\tau_{m}}(s,\,a)\}_{\tau_{m}\leq n\leq N}$
and $\{Y_{t}^{n,\tau_{m}}(s,\,a)\}_{\tau_{m}\leq n\leq N}$. In the
following steps of the proof, we first derive almost sure bounds on $\{Y_{T}^{n,\tau_{m}}\}_{\tau_{m}\leq n\leq N}$.\textbf{}\\
\textbf{}\\
\textbf{Step 2: Bounding} $\{Y_{T}^{n,\tau_{m}}\}_{\tau_{m}\leq n\leq N}$\textbf{
and selecting} $\beta_{T}$\textbf{.} The next lemma provides an almost
sure upper bound on the stochastic process $\{Y_{T}^{n,\tau_{m}}\}_{\tau_{m}\leq n\leq N}$.
Furthermore, this lemma shows that the duration of epoch $m$, which
starts at time $\tau_{m}$ and ends at time $\tau_{m+1}$, is bounded
by $(\tau_{m})^{k}$, and it also provides an explicit selection of
$\beta_{T}$.
\begin{lem}
\label{Lemma 5.8-1} Given any $m\geq1$, assume that for all $n\geq\tau_{m}$, we have $Y_{T}^{n,\tau_{m}}(s,\,a)\leq D_{m}$. Then, for any $n\geq\tau_{m}+(\tau_{m})^{k}=\tau_{m+1}$
we have
\begin{align*}
Y_{T}^{n,\tau_{m}}(s,\,a)\leq & K_{G}D_{m}\left(\gamma+\sqrt{\frac{C(\tau_{\ast}(T))^{-\alpha}}{\kappa(1-C(\tau_{\ast}(T))^{-\alpha}K_{\psi}^{(1)}\kappa)}}+K_{G}K_{S}\right)+\frac{2}{e}\beta_{T}D_{m},
\end{align*}
for all $(s,\,a)\in\mathbb{K}$, where $\beta_{T}$ is given in Eq. (\ref{Beta-1})
with $T$ satisfies conditions (\ref{C1}) and (\ref{C2}).
\end{lem}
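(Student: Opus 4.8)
The plan is to read (\ref{Process Y-1}) as a scalar affine recursion in the outer index $n$, for each fixed $(s,a)$ and $t=T$, and to compare it against the constant level $\ell:=K_{G}\left(\gamma+\sigma_{T}+K_{G}K_{S}\right)D_{m}$, where I write $\sigma_{T}$ for the square root of the coefficient multiplying $\|Q_{T}^{n-1}-Q^{\ast}\|_{2}^{2}$ in (\ref{Risk bound}) evaluated at $t=T$. By the definition of $\beta_{T}$ in (\ref{Beta-1}) this level is exactly $\ell=(K_{G}-2\beta_{T})D_{m}$. First I would show that the driving term of (\ref{Process Y-1}) never exceeds $\ell$: the hypothesis together with the definition of $\tau_{m}$ gives $\|Q_{T}^{n-1}-Q^{\ast}\|_{2}\le D_{m}$ for $n\ge\tau_{m}$, so Lemma \ref{Saddle convergence} bounds $\|(y^{\ast},z^{\ast})-(y^{n,\ast},z^{n,\ast})\|_{2}\le K_{G}K_{S}D_{m}$ and Lemma \ref{Lemma 5.5-1}, specialized to $t=T$, bounds $\|(y^{n,T},z^{n,T})-(y^{n,\ast},z^{n,\ast})\|_{2}\le\sigma_{T}D_{m}$. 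Substituting these into the forcing term and using $\gamma\le1$ to absorb $\gamma K_{G}K_{S}\le K_{G}K_{S}$ shows the per-step forcing is at most $\ell$.

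Next I would pass to the shifted process $\widetilde{Y}_{T}^{n,\tau_{m}}:=Y_{T}^{n,\tau_{m}}-\ell$, which by the previous bound satisfies $\widetilde{Y}_{T}^{n+1,\tau_{m}}\le(1-\theta_{k}^{n})\widetilde{Y}_{T}^{n,\tau_{m}}$ (note $1-\theta_{k}^{n}\ge0$). Iterating from the base value $Y_{T}^{\tau_{m},\tau_{m}}=D_{m}$ then yields $Y_{T}^{n,\tau_{m}}\le\ell+(D_{m}-\ell)^{+}\prod_{j=\tau_{m}}^{n-1}(1-\theta_{k}^{j}(s,a))$. Because $\ell=(K_{G}-2\beta_{T})D_{m}$ and $K_{G}>1$, the residual amplitude obeys $(D_{m}-\ell)^{+}=(1-K_{G}+2\beta_{T})^{+}D_{m}\le2\beta_{T}D_{m}$; moreover the product is non-increasing in $n$, so it suffices to treat $n=\tau_{m+1}$. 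It then remains only to show that the product of the factors $(1-\theta_{k}^{j})$ over the window $[\tau_{m},\tau_{m+1})$ of length $\tau_{m}^{k}$ is at most $1/e$, which combined with the amplitude bound produces exactly the advertised additive term $\tfrac{2}{e}\beta_{T}D_{m}$.

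The main obstacle is this product estimate, $\prod_{j=\tau_{m}}^{\tau_{m}+\tau_{m}^{k}-1}(1-\theta_{k}^{j}(s,a))\le e^{-1}$. Using $1-x\le e^{-x}$ reduces it to the accumulated-step-size bound $\sum_{j=\tau_{m}}^{\tau_{m+1}-1}\theta_{k}^{j}(s,a)\ge1$, which is delicate in the asynchronous setting: by Assumption \ref{Assumption 4.5} the step size equals $1/[\#(s,a,j)]^{k}$ only on the (random) iterations at which $(s,a)$ is actually visited and vanishes otherwise, so the sum must be controlled through the visit counts accrued in the window rather than through $n$ directly. I would lower bound it by an integral comparison $\sum_{c}c^{-k}\ge\int x^{-k}\,dx$ taken over those visit counts, invoking the exploration guarantee of Assumption \ref{Assumption 4.2} and the $\varepsilon$-greedy structure to ensure that $(s,a)$ is visited often enough within a window of length $\tau_{m}^{k}$; this is precisely the polynomial-learning-rate bookkeeping of \cite[Lemma 9]{Even-Dar2004}. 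Along the way I must check, using conditions (\ref{C1})--(\ref{C2}) and $0<\beta_{T}<1$, that $\ell\in[0,D_{m}]$, which keeps the shift argument valid and renders it consistent with the standing hypothesis $Y_{T}^{n,\tau_{m}}\le D_{m}$.
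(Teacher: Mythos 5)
Your proposal is correct and takes essentially the same route as the paper's own proof: both decompose $Y_{T}^{n,\tau_{m}}$ into the fixed level $K_{G}D_{m}\bigl(\gamma+\sigma_{T}+K_{G}K_{S}\bigr)=(K_{G}-2\beta_{T})D_{m}$ (with $\sigma_{T}$ the square-root factor from Lemma \ref{Lemma 5.5-1} at $t=T$, and the level controlled via Lemma \ref{Saddle convergence}), plus a residual of amplitude at most $2\beta_{T}D_{m}$ that is damped by the product $\prod(1-\theta_{k}^{j})\leq e^{-1}$ over a window of length $\tau_{m}^{k}$. Your shifted-process bookkeeping, including the explicit check $(D_{m}-\ell)^{+}\leq2\beta_{T}D_{m}$ via $K_{G}>1$, is if anything a slightly cleaner rendering of the same argument (the paper implicitly over-bounds the initial condition by $K_{G}D_{m}$), and both treatments ultimately defer the asynchronous product estimate to the Even-Dar--Mansour machinery.
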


\begin{proof}
Based on the proof in \cite[Lemma 27]{Even-Dar2004}, we combine
the convergence rate results from Lemmas \ref{Lemma 5.5-1} and \ref{Saddle convergence} into the definition of the process $Y_{T}^{n+1,\tau_{m}}$
as described in Eq. (\ref{Process Y-1}). We then obtain
\[
Y_{T}^{\tau_{m},\tau_{m}}(s,\,a)=K_{G}D_{m}\left(\gamma+\sqrt{\frac{C(\tau_{\ast}(T))^{-\alpha}}{\kappa(1-C(\tau_{\ast}(T))^{-\alpha}K_{\psi}^{(1)}\kappa)}}+K_{G}K_{S}\right)+g_{\tau_{m}},
\]
for all $(s,\,a)\in\mathbb{K}$ and $m\geq1$, where
\[
g_{\tau_{m}}:=K_{G}\left\{ 1-\gamma-\sqrt{\frac{C(\tau_{\ast}(T))^{-\alpha}}{\kappa(1-C(\tau_{\ast}(T))^{-\alpha}K_{\psi}^{(1)}\kappa)}}-K_{G}K_{S}\right\} D_{m},
\]
for all $(s,\,a)\in\mathbb{K}$ and $m\geq1$. We may then write
\begin{align*}
Y_{T}^{n,\tau_{m}}(s,\,a) & =K_{G}D_{m}\left(1-\gamma-\sqrt{\frac{C(\tau_{\ast}(T))^{-\alpha}}{\kappa(1-C(\tau_{\ast}(T))^{-\alpha}K_{\psi}^{(1)}\kappa)}}-K_{G}K_{S}\right)+(1-\theta_{k}^{n})g_{n},
\end{align*}
for all $(s,\,a)\in\mathbb{K}$, $n\leq N$ and $m\geq1$. Since the step sizes
$\theta_{k}^{n}$ are monotonically decreasing in $n$, we can rewrite $g_{n}$
as
\begin{align}
g_{n} & =K_{G}\left\{ 1-\gamma-\sqrt{\frac{C(\tau_{\ast}(T))^{-\alpha}}{\kappa(1-C(\tau_{\ast}(T))^{-\alpha}K_{\psi}^{(1)}\kappa)}}-K_{G}K_{S}\right\} D_{m}\prod_{l=1}^{n-\tau_{m}}(1-\theta_{k}^{l+\tau_{m}})\nonumber \\
 & \leq2\beta_{T}D_{m}\prod_{l=1}^{n-\tau_{m}}(1-\frac{1}{(l+\tau_{m})^{k}})\nonumber \\
 & \leq2\beta_{T}D_{m}\prod_{l=1}^{n-\tau_{m}}(1-(\frac{1}{\tau_{m}})^{k})^{n-\tau_{m}}\nonumber \\
 & \leq2\beta_{T}D_{m}(1-(\frac{1}{\tau_{m}})^{k})^{(\tau_{m})^{k}}\nonumber \\
 & \leq\frac{2}{e}\beta_{T}D_{m},\label{Difference}
\end{align}
for all $n\leq N$, where the first inequality holds by the choice of $\beta_{T}$ in
Eq. (\ref{Beta-1}).
\end{proof}
\textbf{Step 3: Deriving high probability bound on }$\{|Z_{T}^{n,\tau_{m}}|\}_{\tau_{m}\leq n\leq N}$
\textbf{by the Azuma-Hoeffding inequality. }The following Lemma \ref{Lemma 5.2}
directly follows from the results in \cite[Lemma 28]{Even-Dar2004}.
It shows that $\{Z_{t}^{l,\tau_{m}}(s,\,a)\}_{l=1,...,n}$ for fixed $n\leq N$, is a martingale
difference sequence for all $(s,\,a)\in\mathbb{K}$. 
\begin{lem}
\label{Lemma 5.2}\cite[Lemma 28]{Even-Dar2004} Given a fixed $n\geq\tau_{m}$,
for any $i\in[\tau_{m},\,n]$, define
\[
\eta_{i}^{m,n}(s,\,a):=\theta{}_{k}^{i+\tau_{m}}(s,\,a)\prod_{j=i+\tau_{m}+1}^{n}[1-\zeta_{t+1}^{j}(s,\,a)].
\]
Let $\tilde{w}_{i+\tau_{m}}^{n}(s,\,a):=\eta_{i}^{m,n}(s,\,a)\,\zeta_{t+1}^{i+\tau_{m}}(s,\,a)$
so that $|Z_{t}^{l,\tau_{m}}(s,\,a)|=\sum_{i=1}^{l}\tilde{w}_{i+\tau_{m}}^{n}(s,\,a)$.
Then, for all $(s,\,a)\in\mathbb{K}$, we have: (i) for any $n\in[\tau_{m+1},\,\tau_{m+2}]$,
the random variable $\tilde{w}_{i+\tau_{m}}^{n}(s,\,a)$ has zero
mean and is bounded by $((1-\varepsilon)^{k}(\tau_{m})^{k})^{-1}V_{\max}$;
(ii) for any $n\in[\tau_{m+1},\,\tau_{m+2}]$ and $1\leq l\leq n$,
$Z_{t}^{l,\tau_{m}}(s,\,a)$ is a martingale difference sequence satisfying
$|Z_{t}^{l,\tau_{m}}(s,\,a)-Z_{t}^{l-1,\tau_{m}}(s,\,a)|\leq((1-\varepsilon)^{k}(\tau_{m})^{k})^{-1}V_{\max}.$
\end{lem}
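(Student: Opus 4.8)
The plan is to first unroll the linear recursion (\ref{Process W-1}). Starting from $Z_t^{\tau_m,\tau_m}(s,a)=0$ and iterating $Z_t^{n+1,\tau_m}=(1-\theta_k^n)Z_t^{n,\tau_m}+\theta_k^n\zeta_{t+1}^n$, I would obtain the explicit weighted-sum representation
\[
Z_t^{n,\tau_m}(s,a)=\sum_{i}\theta_k^{i+\tau_m}(s,a)\Bigl(\prod_{j=i+\tau_m+1}^{n}(1-\theta_k^{j}(s,a))\Bigr)\zeta_{t+1}^{i+\tau_m}(s,a),
\]
which is precisely $\sum_i\eta_i^{m,n}(s,a)\,\zeta_{t+1}^{i+\tau_m}(s,a)=\sum_i\tilde w_{i+\tau_m}^n(s,a)$, matching the definitions of $\eta_i^{m,n}$ and $\tilde w_{i+\tau_m}^n$ in the statement (with $l=n-\tau_m$). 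This reduces both claims to statements about the individual summands $\tilde w_{i+\tau_m}^n$.

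For the zero-mean and martingale-difference property, I would invoke the identity $\mathbb{E}[Q^\ast(s,a)-H(v^\ast,y^\ast,z^\ast)(s,a)\mid\mathcal{G}_{t+1}^{n-1}]=0$ established in the almost-sure convergence proof, so that each noise term $\zeta_{t+1}^{i+\tau_m}(s,a)$ has zero conditional mean given the history just prior to the transition at outer step $i+\tau_m$. Because the weight $\eta_i^{m,n}(s,a)$ is a product of step-sizes that depend only on the visit counts recorded through the filtration $\mathcal{G}$, I would argue it is measurable with respect to that history, and then conclude $\mathbb{E}[\tilde w_{i+\tau_m}^n\mid\mathcal{G}]=0$ by the tower property. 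Summing, the partial sums $Z_t^{l,\tau_m}(s,a)$ form a martingale and the increments $Z_t^{l,\tau_m}-Z_t^{l-1,\tau_m}=\tilde w_{l+\tau_m}^n$ are the corresponding martingale differences.

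For the uniform bound, I would estimate $|\tilde w_{i+\tau_m}^n|=|\eta_i^{m,n}|\,|\zeta_{t+1}^{i+\tau_m}|$ factor by factor. First, the boundedness of the $Q$-values together with Assumption \ref{Assumption 2.1}(ii) gives $|\zeta_{t+1}^{i+\tau_m}(s,a)|\le V_{\max}$. Next, since $0\le\theta_k^j\le 1$ each factor $(1-\theta_k^j)$ lies in $[0,1]$, so the product is at most one and $|\eta_i^{m,n}|\le\theta_k^{i+\tau_m}(s,a)$. Finally, using the $\varepsilon$-greedy structure of Definition \ref{Definition 4.4 } and the window $n\in[\tau_{m+1},\tau_{m+2}]$, the number of visits satisfies $\#(s,a,i+\tau_m)\ge(1-\varepsilon)\tau_m$, whence $\theta_k^{i+\tau_m}(s,a)=[\#(s,a,i+\tau_m)]^{-k}\le((1-\varepsilon)^k\tau_m^k)^{-1}$. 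Combining the three estimates yields $|\tilde w_{i+\tau_m}^n|\le((1-\varepsilon)^k\tau_m^k)^{-1}V_{\max}$, which is both the per-summand bound in (i) and the increment bound in (ii).

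The main obstacle will be the weight $\eta_i^{m,n}$: it contains the step-sizes $\theta_k^j$ for the \emph{future} indices $j>i+\tau_m$, so establishing both its $\mathcal{G}$-measurability for the martingale argument and the lower bound on the visit count $\#(s,a,i+\tau_m)$ in the asynchronous $\varepsilon$-greedy regime is delicate. I expect to handle this exactly as in \cite[Lemma 28]{Even-Dar2004}, by conditioning on the realized sequence of visited state-action pairs (which freezes all step-sizes) so that the remaining transition noises $\zeta$ are conditionally mean-zero and bounded, and by invoking a covering-time argument over the window $[\tau_{m+1},\tau_{m+2}]$ to guarantee that at least a $(1-\varepsilon)$ fraction of the first $\tau_m$ outer iterations visited $(s,a)$; this frequency is the source of the $(1-\varepsilon)^k$ factor.
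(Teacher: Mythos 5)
Your proposal is correct and follows essentially the same route as the paper, which gives no proof of its own for this lemma and instead defers entirely to \cite[Lemma 28]{Even-Dar2004}: unroll the recursion (\ref{Process W-1}) into the weighted sum $\sum_i \eta_i^{m,n}\zeta_{t+1}^{i+\tau_m}$, use $\mathbb{E}[Q^{\ast}-H(v^{\ast},y^{\ast},z^{\ast})\,|\,\mathcal{G}]=0$ for the martingale-difference property, and combine $|\zeta|\leq V_{\max}$, the product of factors in $[0,1]$, and the $\varepsilon$-greedy visit-count bound $\#(s,a,i+\tau_m)\geq(1-\varepsilon)\tau_m$ to get the increment bound. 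You also correctly silently repair the paper's typo ($1-\zeta$ should be $1-\theta_k^j$ in the definition of $\eta_i^{m,n}$) and correctly flag the only delicate point, the measurability of the future step-sizes inside $\eta_i^{m,n}$, which you resolve exactly as the cited reference does.
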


Based on Lemma \ref{Lemma 5.2} and \cite{Azuma1967}, we obtain a
high probability bound on $|Z_{T}^{n,\tau_{m}}(s,\,a)|$ w.r.t. $m\geq 1$ and $n\in[\tau_{m+1},\,\tau_{m+2}]$ by the Azuma-Hoeffding
inequality, we also derive a selection rule for choosing $\tau_{0}$ and $\{\tau_{m}\}_{m\geq 1}$.
\begin{lem}
\label{Lemma 5.9-1} Given $0<\delta<1$, we have: (i)
\[
\mathbb{P}\left[\forall n\in[\tau_{m+1},\,\tau_{m+2}]:\,\forall(s,\,a)\in\mathbb{K}:\,|Z_{T}^{n,\tau_{m}}(s,\,a)|\leq(1-\frac{2}{e})\beta_{T}D_{m}\right]\geq1-\delta(1-\varepsilon),
\]
for \[\tau_{m}=\Theta\left(\left(\frac{V_{\max}^{2}\ln(V_{\max}|\mathbb{S}||\mathbb{A}|/[\delta\beta_{T}D_{m}(1-\varepsilon)])}{(\beta_{T})^{2}D_{m}(1-\varepsilon)^{1+3k}}\right)^{1/k}\right),\]
and (ii) 
\[
\mathbb{P}\left[\forall m\in[1,\,\frac{1}{1-\varepsilon}],\,\forall n\in[\tau_{m+1},\,\tau_{m+2}],\,\forall(s,\,a)\in\mathbb{K}:\,|Z_{T}^{n,\tau_{m}}(s,\,a)|\leq\tilde{\varepsilon}\right]\geq1-\delta,
\]
for  \[\tau_{0}=\Theta\left(\left(\frac{V_{\max}^{2}\ln(V_{\max}|\mathbb{S}||\mathbb{A}|/[\delta\beta_{T}\tilde{\varepsilon}(1-\varepsilon)])}{(\beta_{T})^{2}\tilde{\varepsilon}^{2}(1-\varepsilon){}^{1+3k}}\right)^{1/k}\right).\]
\end{lem}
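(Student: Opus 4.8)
The plan is to derive both parts as applications of the Azuma--Hoeffding inequality to the martingale $\{Z_T^{l,\tau_m}(s,\,a)\}_{l}$ whose martingale-difference structure and increment bound were already supplied by Lemma \ref{Lemma 5.2}, followed by union bounds over state-action pairs and over iteration indices. For part (i), I would start from Lemma \ref{Lemma 5.2}(ii): for a fixed pair $(s,\,a)$ and a fixed $n\in[\tau_{m+1},\,\tau_{m+2}]$, the increments $Z_T^{l,\tau_m}(s,\,a)-Z_T^{l-1,\tau_m}(s,\,a)$ are uniformly bounded by $c_m:=((1-\varepsilon)^{k}\tau_m^{k})^{-1}V_{\max}$. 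The key quantitative input is the sum of squared increments. Since all updates of $Z_T^{n,\tau_m}$ occur inside a window of length $\Theta(\tau_m^{k})$ (recall from Lemma \ref{Lemma 5.8-1} that $\tau_{m+1}=\tau_m+\tau_m^{k}$), on which every step size is comparable to $\tau_m^{-k}$, the number $L_m$ of effective increments is controlled by the $\varepsilon$-greedy visit frequency and $\sum_l c_m^2=\Theta(L_m\,c_m^2)$.

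Azuma--Hoeffding then gives, for a single pair and index,
\[
\mathbb{P}\left[|Z_T^{n,\tau_m}(s,\,a)| > (1-\tfrac{2}{e})\beta_T D_m\right]\le 2\exp\left(-\frac{(1-2/e)^2\beta_T^2 D_m^2}{2\,L_m\,c_m^2}\right).
\]
First I would take a union bound over the $|\mathbb{S}||\mathbb{A}|$ state-action pairs and over the $\Theta(\tau_m^{k})$ indices $n\in[\tau_{m+1},\,\tau_{m+2}]$, then require this count times the right-hand side to be at most $\delta(1-\varepsilon)$. Taking logarithms and solving for $\tau_m$ yields the stated threshold: because $c_m^2$ carries a factor $\tau_m^{-2k}$ against the $\Theta(\tau_m^{k})$ window, the quantity $L_m c_m^2$ scales like $\tau_m^{-k}$, so inverting the exponential collapses the exponent to a power $1/k$, exactly as in $\tau_m=\Theta(\cdots)$. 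The logarithmic factor $\ln(V_{\max}|\mathbb{S}||\mathbb{A}|/[\delta\beta_T D_m(1-\varepsilon)])$ arises from the union-bound count combined with the $1/[\delta(1-\varepsilon)]$ failure budget.

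For part (ii), I would iterate part (i) across the epochs $m\in[1,\,1/(1-\varepsilon)]$. Since $D_m=(1-\beta_T)^{m-1}V_{\max}$ is geometrically decreasing and stays above the target accuracy on the relevant range, the base threshold $\tau_0$ (obtained by replacing $D_m$ with $\tilde{\varepsilon}$) dominates every per-epoch $\tau_m$, so it suffices to run until $\tau_0$ as stated. A union bound over the at most $1/(1-\varepsilon)$ epochs, each allotted failure probability $\delta(1-\varepsilon)$ by part (i), gives total failure probability at most $\delta$, and hence the uniform bound $|Z_T^{n,\tau_m}(s,\,a)|\le\tilde{\varepsilon}$ with probability at least $1-\delta$.

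The hard part will be the precise bookkeeping in part (i): correctly counting the number of martingale increments $L_m$ and tracking all powers of $(1-\varepsilon)$ coming from the $\varepsilon$-greedy visit probabilities, so that $\sum_l c_m^2$ produces exactly the claimed dependence on $\tau_m$, $D_m$, $\beta_T$, and $(1-\varepsilon)$ after inverting the exponential. Here I would follow \cite[Lemma 28]{Even-Dar2004} and the associated Azuma step of \cite{Even-Dar2004} as closely as possible, adapting their asynchronous-visit argument to our inner--outer loop indexing and verifying that the extra risk-estimation window does not change the order of the bound.
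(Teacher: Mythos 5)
Your proposal matches the paper's own proof essentially step for step: the paper likewise invokes the martingale-difference structure from Lemma \ref{Lemma 5.2}, applies Azuma--Hoeffding with increments $c_i=V_{\max}/((1-\varepsilon)^k\tau_m^k)$ following \cite[Lemmas 30--31]{Even-Dar2004}, takes a union bound over the $\Theta(\tau_m^k)$ indices in $[\tau_{m+1},\tau_{m+2}]$ and the $|\mathbb{S}||\mathbb{A}|$ pairs by setting $\tilde{\delta}_m=\delta(1-\varepsilon)/[(\tau_{m+2}-\tau_{m+1})|\mathbb{S}||\mathbb{A}|]$, substitutes $\tilde{\varepsilon}=(1-2/e)\beta_T D_m$ for part (i), and obtains part (ii) by a further union bound over the at most $1/(1-\varepsilon)$ epochs with $D_m$ replaced by $\tilde{\varepsilon}$. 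The visit-counting bookkeeping you flag as the hard part is handled in the paper at the same level of detail you propose, namely by deferring to the asynchronous-visit argument of \cite{Even-Dar2004}.
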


\begin{proof}
First, note that $\{Z_{t}^{l,\tau_{m}}(s,\,a)\}_{l=1,...,n}$ is a
martingale difference sequence for all $(s,\,a)\in\mathbb{K}$ by
Lemma \ref{Lemma 5.2}. Next, following the proofs of \cite[Lemma 30]{Even-Dar2004}
and \cite[Lemma 31]{Even-Dar2004}, for each state-action pair, we
apply Lemma \ref{Lemma 5.2} and the Azuma-Hoeffding inequality to
$Z_{T}^{n,\tau_{m}}(s,\,a)$ with $c_{i}=\frac{1}{(1-\varepsilon)^{k}(\tau_{m})^{k}}V_{\max},$
for all $i\in[\tau_{m},\,n]$. Then, for any $n\in\left[\tau_{m+1},\,\tau_{m+2}\right]$,
we have
\begin{align}
\mathbb{P}\left[|Z_{T}^{n,\tau_{m}}(s,\,a)|\geq\tilde{\varepsilon}~|~n\in[\tau_{m+1},\,\tau_{m+2}]\right] & \leq2\exp\left(\frac{-\tilde{\varepsilon}^{2}}{2\sum_{i=\tau_{m}+1,\,i\in N^{s,a}}^{n}c_{i}^{2}}\right)\nonumber \\
 & \leq2\exp\left(-C\frac{\tilde{\varepsilon}^{2}\tau_{m}(1-\varepsilon)^{1+3k}}{V_{\max}^{2}}\right),\label{Azuma}
\end{align}
with a constant $C>0$. Let $\tilde{\delta}_{m}$ denote the right hand side
of the inequality (\ref{Azuma}), which holds for $\tau_{m}=\Theta(\ln(1/\tilde{\delta}_{m})V_{\max}^{2}/(1-\varepsilon)^{1+3k}\tilde{\varepsilon}^{2})$.
The union bound gives
\[
\mathbb{P}\left[\forall n\in[\tau_{m+1},\,\tau_{m+2}]:\,Z_{T}^{n,\tau_{m}}(s,\,a)\leq\tilde{\varepsilon}\right]\leq\sum_{n=\tau_{m}+1}^{\tau_{m}+2}\mathbb{P}\left[Z_{T}^{n,\tau_{m}}(s,\,a)\leq\tilde{\varepsilon}\right],
\]
and so taking $\tilde{\delta}_{m}=\frac{\delta(1-\varepsilon)}{(\tau_{m+2}-\tau_{m+1})|\mathbb{S}||A|}$
assures that with probability at least $1-\delta(1-\varepsilon)$,
we have $|Z_{T}^{n,\tau_{m}}(s,\,a)|\leq\tilde{\varepsilon}$ for
all $(s,\,a)\in \mathbb{K}$ and $n\in[\tau_{m+1},\,\tau_{m+2}]$. As a
result, we have
\[
\tau_{m}=\Theta(\ln(1/\tilde{\delta}_{m})V_{\max}^{2}/(1-\varepsilon)^{1+3k}\tilde{\varepsilon}^{2})=\Theta\left(\left(\frac{V_{\max}^{2}\ln(V_{\max}|\mathbb{S}||\mathbb{A}|/[\delta\beta_{T}D_{m}(1-\varepsilon)])}{(\beta_{T})^{2}D_{m}(1-\varepsilon){}^{1+3k}}\right)^{1/k}\right).
\]
Setting $\tilde{\varepsilon}=(1-2/e)\beta_{T}D_{m}$ gives the desired
bound in Lemma \ref{Lemma 5.9-1}(i). For Lemma \ref{Lemma 5.9-1}(ii),
we know that
\[
\mathbb{P}\left[\forall n\in[\tau_{m+1},\,\tau_{m+2}]:\,|Z_{T}^{n,\tau_{m}}|\geq(1-\frac{2}{e})\beta_{T}D_{m}\right]\leq\frac{\delta}{m},
\]
and obviously
\[
\mathbb{P}\left[\forall n\in[\tau_{m+1},\,\tau_{m+2}]:\,|Z_{T}^{n,\tau_{m}}|\geq D_{m}\right]\leq\frac{\delta}{m}.
\]
Using the union bound again shows that
\[
\mathbb{P}\left[\forall m\leq\frac{1}{1-\varepsilon},\,\forall n\in[\tau_{m+1},\,\tau_{m+2}],\,|Z_{T}^{n,\tau_{m}}|\geq\tilde{\varepsilon}\right]\leq\sum_{m=1}^{M}\mathbb{P}\left[\forall n\in[\tau_{m+1},\,\tau_{m+2}],\,|Z_{T}^{n,\tau_{m}}|\geq\tilde{\varepsilon}\right]\leq\delta,
\]
where $\tilde{\varepsilon}=D_{m}$. We replace $D_{m}$ with $\tilde{\varepsilon}$
in Lemma \ref{Lemma 5.9-1}(i) to obtain \[\tau_{0}=\Theta\left(\left(\frac{V_{\max}^{2}\ln(V_{\max}|\mathbb{S}||\mathbb{A}|/[\delta\beta_{T}\tilde{\varepsilon}(1-\varepsilon)])}{(\beta_{T})^{2}\tilde{\varepsilon}^{2}(1-\varepsilon){}^{1+3k}}\right)^{1/k}.\right)\]
\end{proof}
\textbf{Step 4: Completing the proof by combining Steps 1 through
3.} This step completes the proof of Theorem \ref{Theorem 4.5-1}.
The following Lemma \ref{Fact 5.11-1} is a standard fact about numerical
sequences that is used to derive the final convergence rate in Theorem
\ref{Theorem 4.5-1}.
\begin{lem}
\cite[Lemma 32]{Even-Dar2004}\label{Fact 5.11-1} Let $a_{m+1}=a_{m}+\frac{1}{1-\varepsilon}(a_{m})^{k}=a_{0}+\sum_{i=0}^{m}\frac{1}{1-\varepsilon}(a_{i})^{k}.$
Then, for any $k\in(0,\,1)$, $a_{m}=O\left(((a_{0})^{k}+\frac{1}{1-\varepsilon}m)^{\frac{1}{1-k}}\right)=O\left(a_{0}+(\frac{1}{1-\varepsilon}m)^{\frac{1}{1-k}}\right)$.
\end{lem}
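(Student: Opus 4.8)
The plan is to linearize the recurrence via the substitution $c_m := a_m^{1-k}$, which turns the nonlinear growth into an essentially arithmetic progression whose rate is transparent. First I would rewrite the recurrence as $a_{m+1} = a_m\bigl(1 + \tfrac{1}{1-\varepsilon}\,a_m^{k-1}\bigr)$ and raise both sides to the power $1-k$, obtaining
\[
c_{m+1} = c_m\left(1 + \frac{1}{1-\varepsilon}\,a_m^{k-1}\right)^{1-k}.
\]
The one genuinely load-bearing estimate is the concavity bound $(1+x)^{1-k} \le 1 + (1-k)x$ for $x \ge 0$, which holds because $x \mapsto (1+x)^{1-k}$ is concave for $1-k \in (0,1)$ and therefore lies below its tangent line at $x=0$. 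Applying it with $x = \tfrac{1}{1-\varepsilon}\,a_m^{k-1}$ and using the identity $a_m^{1-k}\cdot a_m^{k-1} = 1$ collapses the bound to
\[
c_{m+1} \le c_m + \frac{1-k}{1-\varepsilon}.
\]

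Second, I would telescope this from $0$ to $m$ to get $c_m \le a_0^{1-k} + \tfrac{(1-k)m}{1-\varepsilon}$. Raising to the power $1/(1-k)$ and absorbing the harmless factor $1-k \le 1$ yields the first claimed estimate,
\[
a_m \le \left(a_0^{1-k} + \frac{m}{1-\varepsilon}\right)^{1/(1-k)} = O\!\left(\left(a_0^{1-k} + \frac{m}{1-\varepsilon}\right)^{1/(1-k)}\right).
\]
To pass to the second, additive form I would invoke the elementary convexity inequality $(u+v)^{p} \le 2^{\,p-1}(u^{p}+v^{p})$ with exponent $p = 1/(1-k) \ge 1$, applied to $u = a_0^{1-k}$ and $v = \tfrac{m}{1-\varepsilon}$; since $\bigl(a_0^{1-k}\bigr)^{1/(1-k)} = a_0$, this produces $a_m = O\!\bigl(a_0 + (\tfrac{m}{1-\varepsilon})^{1/(1-k)}\bigr)$, as required.

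I expect the only subtle point to be the concavity inequality in the first step, since everything downstream is a routine telescoping sum followed by a power-mean (subadditivity) bound. The constant $1-k$ that appears in the sharp arithmetic bound is immaterial here: because $k$ is a fixed constant, it is swallowed by the $O(\cdot)$ notation, so the $(1-k)$ factor may be dropped when stating either of the two asymptotic forms.
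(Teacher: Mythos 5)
Your proof is correct. Note that the paper itself gives no argument for this lemma at all --- it is stated purely as a citation to \cite[Lemma 32]{Even-Dar2004} --- so what you have produced is a self-contained derivation of a result the authors simply import. Your route (the substitution $c_{m}=a_{m}^{1-k}$, the tangent-line bound $(1+x)^{1-k}\leq1+(1-k)x$ from concavity, the cancellation $a_{m}^{1-k}a_{m}^{k-1}=1$, telescoping, and finally the subadditivity bound $(u+v)^{p}\leq2^{p-1}(u^{p}+v^{p})$ for $p=1/(1-k)\geq1$) is the standard and arguably cleanest way to handle this recurrence, and every step checks out; in particular the absorption of the constants $1-k$ and $2^{p-1}$ into the $O(\cdot)$ is legitimate since $k$ is fixed. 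The only (cosmetic) point worth flagging is that the manipulations implicitly assume $a_{0}>0$ so that the powers $a_{m}^{k-1}$ are defined; this is harmless here since $a_{0}$ is taken to be $\tau_{0}$, a positive iteration count, and the case $a_{0}=0$ is degenerate anyway.
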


Based on Lemma \ref{Fact 5.11-1}, we set $a_{0}$ to be $\tau_{0}$
in Lemma \ref{Lemma 5.9-1}, and we have
\begin{align*}
 & \mathbb{P}\left[\forall n\in[\tau_{m+1},\,\tau_{m+2}]:\,\forall(s,\,a)\in\mathbb{K}:\,|Z_{T}^{n,\tau_{m}}(s,\,a)|\leq(1-\frac{2}{e})\beta_{T}D_{m}\right]\\
\leq & \mathbb{P}\left[\forall n\in[\tau_{m+1},\,\tau_{m+2}]:\,\forall(s,\,a)\in\mathbb{K}:\,|Z_{T}^{n,\tau_{m}}(s,\,a)|+Y_{T}^{n,\tau_{m}}(s,\,a)\leq D_{m+1}\right]\\
\leq & \mathbb{P}\left[\forall n\in[\tau_{m+1},\,\tau_{m+2}]:\,\|Q_{T}^{n}-Q^{*}\|_{\infty}\leq D_{m+1}\right]\\
\leq & \mathbb{P}\left[\forall n\in[\tau_{m+1},\,\tau_{m+2}]:\,\|Q_{T}^{n}-Q^{*}\|_{\infty}\leq D_{m}\right]\\
\leq & \mathbb{P}\left[\forall n\in[\tau_{m+1},\,\tau_{m+2}]:\,\|Q_{T}^{n}-Q^{*}\|_{2}\leq\sqrt{|\mathbb{S}||\mathbb{A}|}D_{m}\right],
\end{align*}
where the first above inequality holds based on Lemma \ref{Lemma 5.8-1}, the second one holds based on Lemma \ref{Lemma 5.5-2}, and the third one holds based on the definition of sequence $\{D_{m}\}_{m\geq 1}$. Choose $\bar{\varepsilon}$ to satisfy $(1-\frac{2}{e})\beta_{T}D_{m}=\bar{\varepsilon}\leq D_{m}$,
then we have by Lemma \ref{Lemma 5.9-1}(ii) that
\[
\mathbb{P}\left[\forall m\in[1,\,\frac{1}{1-\varepsilon}],\,\forall n\in[\tau_{m+1},\,\tau_{m+2}],\,\forall(s,\,a)\in\mathbb{K}:\,\|Q_{T}^{n}-Q^{*}\|_{2}\leq\sqrt{|\mathbb{S}||\mathbb{A}|}D_{m}\right]\geq1-\delta,
\]
with \[\tau_{0}=\Theta\left(\left(\frac{V_{\max}^{2}\ln(V_{\max}|\mathbb{S}||\mathbb{A}|/[\delta\beta_{T}\bar{\varepsilon}(1-\varepsilon)])}{(\beta_{T})^{2}(\bar{\varepsilon})^{2}(1-\varepsilon){}^{1+3k}}\right)^{1/k}\right).\]
Since this statement holds for all $m\in[1,\,\frac{1}{1-\varepsilon}]$,
based on Lemma \ref{Fact 5.11-1}, we have
\begin{equation}
\mathbb{P}\left[\forall n\in[\tau_{m+1},\,\tau_{m+2}],\,\forall(s,\,a)\in\mathbb{K}:\,\|Q_{T}^{n}-Q^{*}\|_{2}\leq\sqrt{|\mathbb{S}||\mathbb{A}|}D_{m}\right]\geq1-\delta.\label{pb}
\end{equation}
Set $\tilde{\varepsilon}$ such that $\sqrt{|\mathbb{S}||\mathbb{A}|}D_{m}\leq\tilde{\varepsilon}$
and $D_{m}=V_{\max}(1-\beta_{T})^{m}$, we have $m\geq(1/\beta_{T})\ln(V_{\max}\sqrt{|\mathbb{S}||\mathbb{A}|}/\tilde{\varepsilon})$
and so
\[
\tau_{m}=\Theta\left(\left(\frac{V_{\max}^{2}\ln(V_{\max}|\mathbb{S}||\mathbb{A}|/[\delta\beta_{T}\bar{\varepsilon}(1-\varepsilon)])}{(\beta_{T})^{2}(\bar{\varepsilon})^{2}(1-\varepsilon){}^{1+3k}}\right)^{1/k}+\left(\frac{1}{(1-\varepsilon)\beta_{T}}\ln\left(\frac{V_{\max}\sqrt{|\mathbb{S}||\mathbb{A}|}}{\tilde{\varepsilon}}\right)\right)^{\frac{1}{1-k}}\right).
\]
Since the probability bound (\ref{pb}) holds for all $n\in[\tau_{m+1},\,\tau_{m+2}]$,
if we replace $\bar{\varepsilon}$ with $\tilde{\varepsilon}/\sqrt{|\mathbb{S}||\mathbb{A}|}$,
we get the desired result since $\bar{\varepsilon}\leq\tilde{\varepsilon}/\sqrt{|\mathbb{S}||\mathbb{A}|}$.

\section{Numerical experiments}

We illustrate the application of RaQL with an infinite-horizon inventory control problem. In practice, RaQL finds the optimal risk-aware ordering policy $\pi^{\ast}: \mathbb{S}\rightarrow\mathbb{A}$, which is more reliable than the standard one because it is sensitive to low probability events with extremely high random demand.
In each stage, we first observe the current stock $s\in\mathbb{S}$ in inventory, then order $a\in\mathbb{A}$ new units, after which a random demand $D$ is realized. The new inventory level in the next stage is \[s^{\prime} = \max\{0,\,\min\{s + a,\, S_{\max}\}-D\},\] where $S_{\max}$ denotes the largest state in $\mathbb{S}:=\{0,1,...,S_{\max}\}$. The random cost is \[c(s,\,a,\,D) =  \tilde{c}\cdot a + b\cdot\max\{D-s-a,\,0\} - p\cdot\min\{s + a,\,D\},\] where $\tilde{c}$, $p$ and $b$ are the unit order cost, selling price, and backorder cost, respectively. 

For our experiments, we choose $\tilde{c}=3$, $p=5$, $b=4$, $D$ is uniform on $\{1,2,...,10\}$, and the finite state/action spaces are: $\mathbb{S} = \{0,1,...,19\}$ and $\mathbb{A}=\{1,2,...,10\}$. We set the discount factor to be $\gamma = 0.1$, and we assume that all model parameters (costs, price, and transition probabilities) are all stationary. In these experiments, we evaluate the performance in terms of the
relative error $\|Q_{T}^{n}-Q^{*}\|_{2}/\|Q^{*}\|_{2},\,n\leq N$.
Here, we obtain $Q^{*}$ exactly by doing risk-aware DP (as proposed
in \cite{Ruszczynski2010}) where in each iteration the risk-aware
Bellman operator is computed by exactly solving a stochastic saddle-point
optimization problem (see $\mathcal{T}_{G}$ in (\ref{Dynamic programming 1})).
First, we verify the convergence of our algorithm for a few different
risk measures, and then we compare the performance with standard risk-neutral
$Q$-learning. These results confirm the almost sure convergence of
our algorithm as well as its competitive convergence rate. We record and compare the computation time required to reach the same
relative error for RaQL with CVaR and standard $Q$-learning. We also compare the reliability of risk-aware policy and risk-neutral policy by showing how the risk-aware policy reduces the variance of expected cost when the demand is generated from the underlying distribution. Second,
we test the performance of our algorithm against risk-sensitive $Q$-learning
(RsQL) as proposed in \cite{Shen2014} for the entropic risk measure,
since both methods can be applied. This comparison reveals the advantages
of RaQL both in terms of computational efficiency and accuracy. Third,
we compare SASP and stochastic subgradient descent for risk estimation.
This comparison demonstrates that SASP is better suited for estimation
of complex risk measures.

Throughout the experiments, we conduct
$50$ simulation runs for each implementation of $Q$-learning type
algorithms (RaQL, standard $Q$-learning, and RsQL), and record the
mean and standard deviation of relative errors among the simulation
runs. The experiments were performed on a generic laptop with Intel
Core i7 processor, 8GM RAM, on a 64-bit Windows 8 operating system
running Matlab R2015a and CPLEX Studio 12.5. 

\subsection{Experiment I: Risk-aware vs. Risk-neutral }

\subsubsection{Convergence rate comparison}

We intend to show that a variety of risk measures fit into our RaQL
framework, and also to show that RaQL has a convergence rate similar
to risk-neutral $Q$-learning. We consider
CVaR and absolute semi-deviation. We set the number of outer iterations
to be $N=10000$, and the number of inner iterations to be $T=100$.
In these experiments, Risk-aware DP terminates after finding
an $\epsilon$-optimal policy with $\epsilon=0.01$. We use a linear
learning rate i.e. $k=1$, and set $\alpha=0.1$ for CVaR, and $r=0.5$
for absolute semi-deviation. 

As shown in Figure 1, RaQL converges almost surely to the optimal
$Q$-value as expected. Moreover, in this experiment, the convergence
rate of RaQL matches classical $Q$-learning. In Figure 1, the error
bars represent the standard deviation from simulation.

\begin{figure}
\begin{centering}
\includegraphics[scale=0.6]{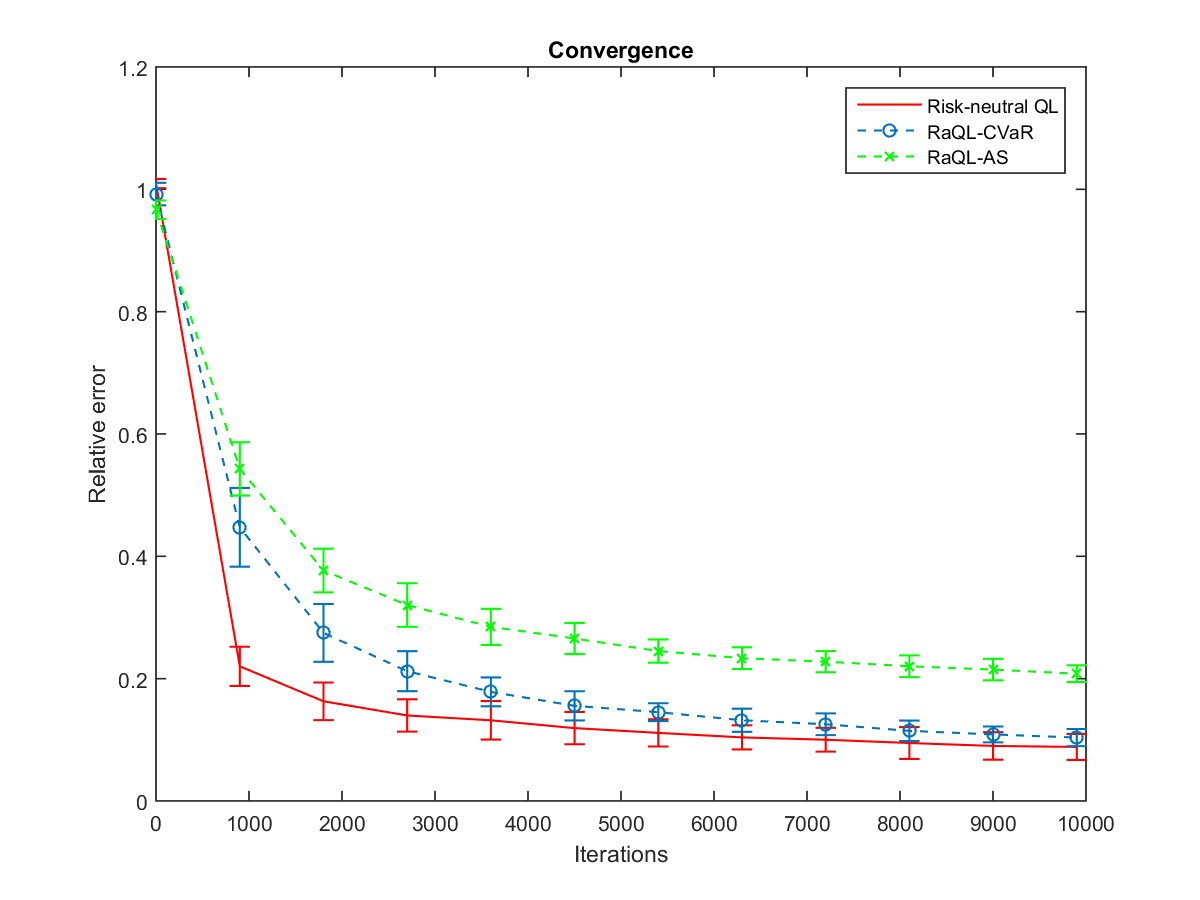}
\par\end{centering}
\caption{Numerical Experiment Result I}
\end{figure}

\subsubsection{Computation time comparison}

In this experiment, we compare the time required for RaQL and standard
$Q$-learning to reach the same precision $\epsilon$ i.e. $\|Q_{T}^{n}-Q^{*}\|_{2}/\|Q^{*}\|_{2}\leq\epsilon$.
Table 2 shows the expected computation time results under $50$ simulations
when choosing different precision levels $\epsilon$, and different
$T$. Table 2 shows that for any $\epsilon$, the expected computation
time for RaQL will decrease with the $T$ selected, and will be close
to that of standard $Q$-learning, which means that RaQL has robust
convergence even when the number of iterations for risk estimation
is small. 

\begin{table}
\begin{centering}
\begin{tabular}{ccccc}
\cline{2-5} \cline{3-5} \cline{4-5} \cline{5-5} 
 & $\epsilon=0.5$ & $\epsilon=0.2$ & $\epsilon=0.15$ & $\epsilon=0.1$\tabularnewline
\hline 
RaQL ($T=100$) & $1.268$s & $3.493$s & $6.798$s & $30.976$s\tabularnewline
\hline 
RaQL ($T=50$) & $0.537$s & $1.061$s & $6.135$s & $5.225$s\tabularnewline
\hline 
RaQL ($T=10$) & $0.119$s & $0.302$s & $0.370$s & $1.329$s\tabularnewline
\hline 
RaQL ($T=5$) & $0.062$s & $0.320$s & $0.286$s & $0.804$s\tabularnewline
\hline 
RaQL ($T=2$) & $0.033$s & $0.169$s & $0.301$s & $0.350$s\tabularnewline
\hline 
RaQL ($T=1$) & $0.022$s & $0.064$s & $0.143$s & $0.529$s\tabularnewline
\hline 
Standard $Q$-learning & $0.027$s & $0.125$s & $0.169$s & $0.374$s\tabularnewline
\hline 
\end{tabular}\\
\par\end{centering}
\caption{Computation Time}
\end{table}

\subsubsection{Policy comparison}
Figure 2 compares the risk-aware ordering policy from RaQL (with CVaR) and the risk-neutral ordering policy from standard $Q$-learning over 500 simulated trajectories. The histograms in Plot 4 show that the risk-aware ordering policy leads to slightly higher expected cost but lower variance. In addition, the right tails of these two distributions reveal that the risk-aware ordering policy is more reliable since it reduces the probability of events with extremely high cost. 
\begin{figure}
	\begin{centering}
		\includegraphics[scale=0.6]{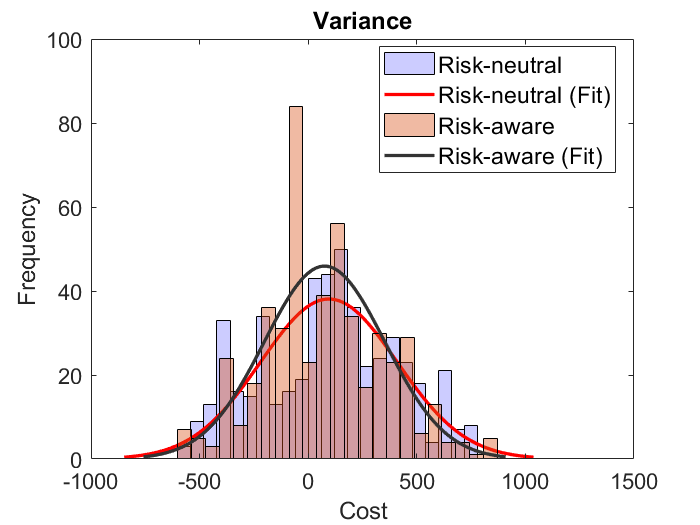}
		\par\end{centering}
	\caption{Policy Comparison}
\end{figure}

\subsection{Experiment II: RaQL vs. RsQL}

In this experiment, we compare the performance of RaQL with risk-sensitive
$Q$-learning (RsQL) as proposed in \cite{Shen2014}. We use the entropic
risk measure (constructed from utility-based shortfall) as in \cite{Foellmer2011,Foellmer2011a}
to compare RaQL and RsQL. An entropic risk measure can be constructed
from the utility function $u(x)=1-\exp(-\lambda\,x),\,\lambda>0$
for $x\in\mathbb{R}$ in OCE from Example \ref{Example 3.3-1}. We
set $\lambda=0.01$, the number of outer iterations to be $N=1\times10^{5}$,
and the number of inner iterations to be $T=10$ for RaQL. The total
number of iterations for RsQL is $1\times10^{5}$. The other settings
remain the same as in Experiment I. Under these settings, RsQL terminates
after 4.559s in expectation and RaQL uses 4.521s in expectation, to
complete the first $1\times10^{4}$ iterations. Figure 2 shows that
RaQL converges faster than RsQL. The convergence rate has also has
lower standard deviation as shown by the error bars. We conjecture
that the inner-outer loop structure of RaQL estimates the risk and
updates the $Q$-values independently, which helps to reduce the bias
in iterative $Q$-learning. In contrast, in RsQL, the risk estimation
and $Q$-value updates are conducted simultaneously which may result
in higher bias. 

\begin{figure}
\begin{centering}
\includegraphics[scale=0.6]{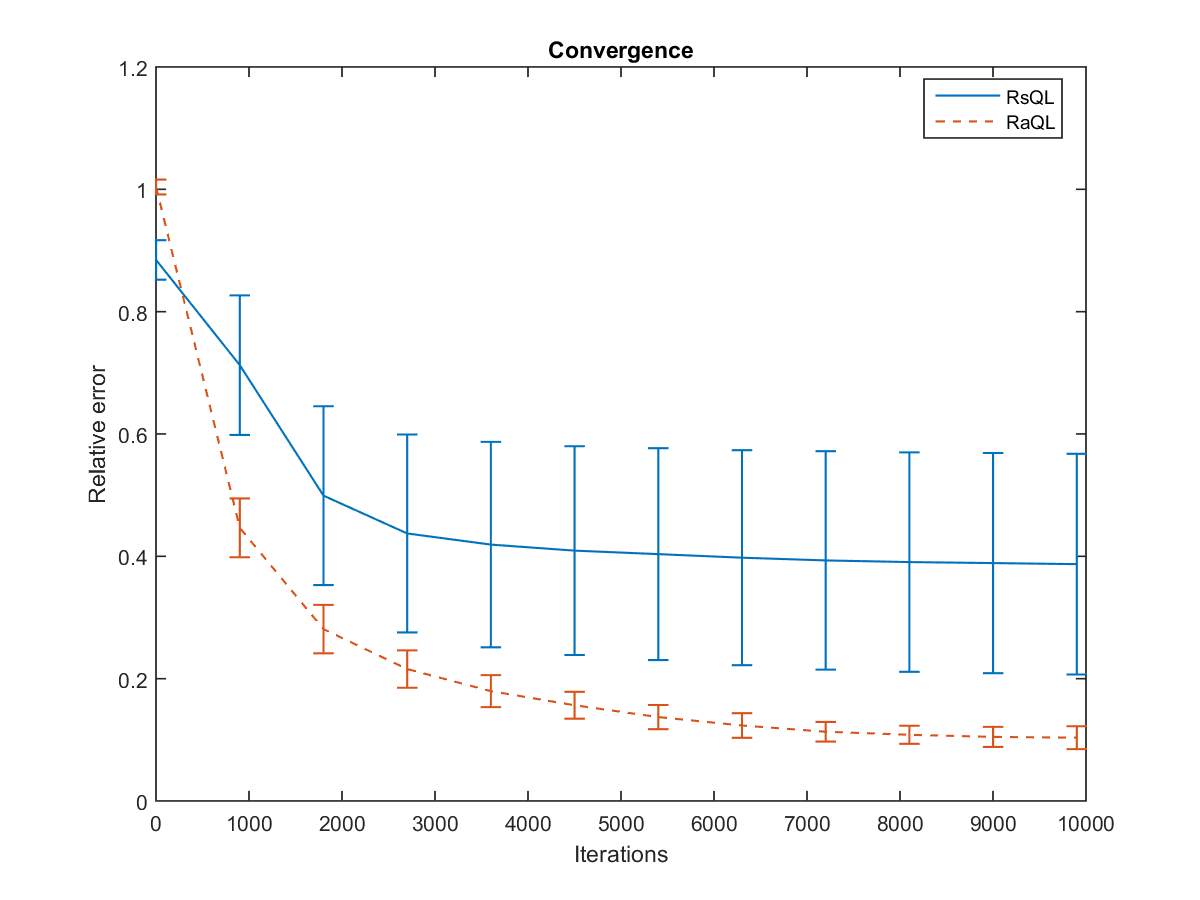}
\par\end{centering}
\caption{Numerical Experiment Result II}
\end{figure}

\subsection{Experiment III: SASP vs. Stochastic subgradient descent}

In this experiment, we compare RaQL with SASP and stochastic subgradient
descent for risk estimation procedure to show that SASP has more accurate risk estimation. In particular, for stochastic
subgradient descent we cut the moving average step (\ref{gnt}), and
change step (\ref{SASP1}) into
\begin{align*}
\left(y_{t+1}^{n}(s_{t}^{n},\,a^{n}),\,z_{t+1}^{n}(s_{t}^{n},\,a^{n})\right)= & \Pi_{\mathcal{Y}\times\mathcal{Z}}\left\{ \left(y_{t}^{n}(s_{t}^{n},\,a^{n}),\,z_{t}^{n}(s_{t}^{n},\,a^{n})\right)\right.\\
 & \left.-\lambda_{t,\alpha}\psi\left(v^{n-1}(s_{t+1}^{n}),\,y_{t}^{n}(s_{t}^{n},\,a^{n}),\,z_{t}^{n}(s_{t}^{n},\,a^{n})\right)\right\} ,
\end{align*}
where the subgradient estimation of the current iteration is combined
with computation of the saddle-point $(y_{t}^{n}(s_{t}^{n},\,a^{n}),\,z_{t}^{n}(s_{t}^{n},\,a^{n})$
(in SASP, the moving average of historical estimations $(y^{n,t}(s_{t}^{n},\,a^{n}),\,z^{n,t}(s_{t}^{n},\,a^{n}))$
is used for this purpose). In this experiment, we set the number of
outer iterations to be $N=3000$ and the number of inner iterations
to be $T=100$, we take a linear learning rate $k=1$, and we set
the step-size for risk estimation to be $\lambda_{t,\alpha}=Ct^{-\alpha}$
with $\alpha=1/2$. We compare the two procedures for a functionally
coherent risk measure (see Example \ref{Example 3.2}). Figure 3 suggests
that RaQL running on SASP has a lower relative error compared to the
modified algorithm which uses stochastic subgradient descent, especially
when the underlying risk measure is non-smooth and degenerate.

\begin{figure}
\begin{centering}
\includegraphics[scale=0.6]{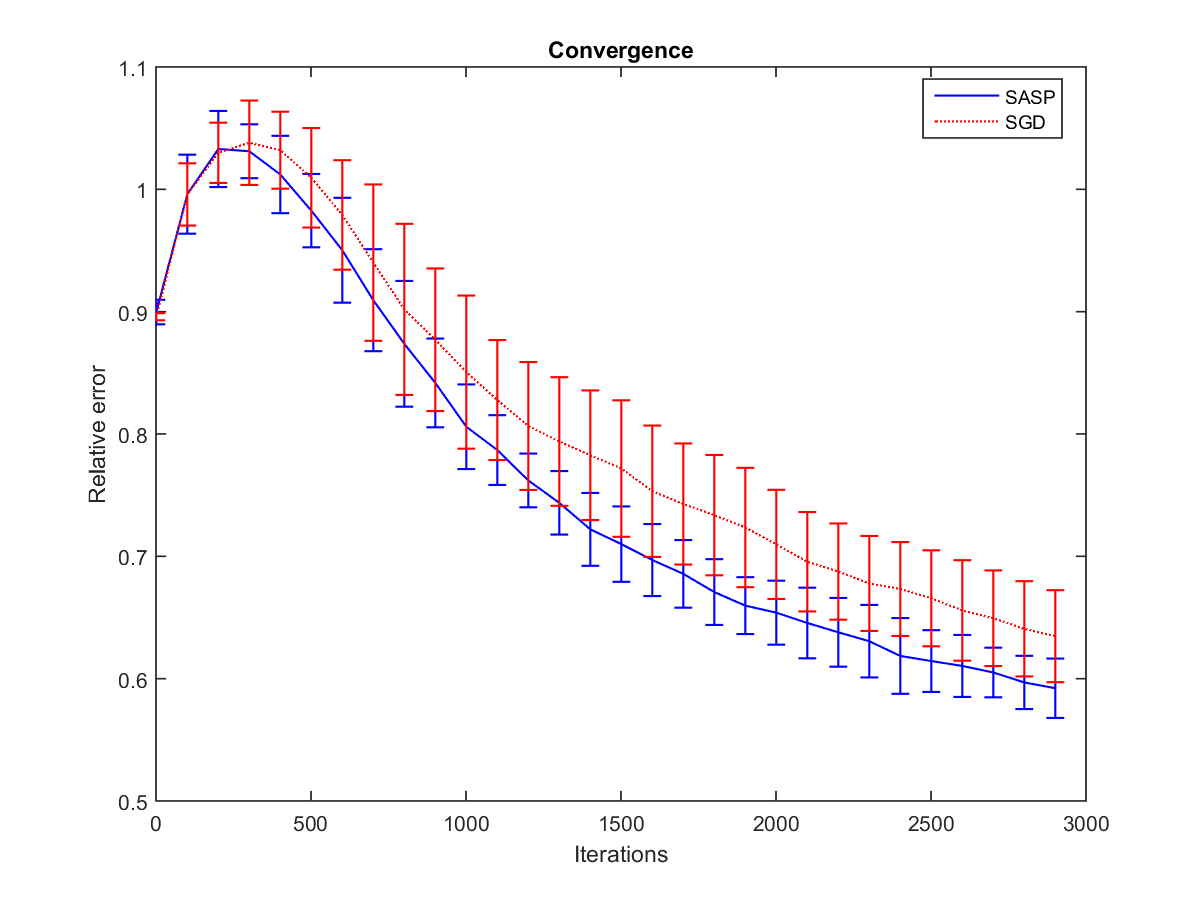}
\par\end{centering}
\caption{Numerical Experiment Result III}
\end{figure}

\section{Conclusion}

We developed a new simulation-based algorithm for finite state/action,
infinite-horizon, risk-aware Markov decision process, called Risk-aware
$Q$-learning (RaQL). This algorithm can be used to solve many real
life risk-aware dynamic optimization problems in areas such as robotics,
sequential online auctions, and infrastructure protection. We demonstrate
that many commonly investigated risk measures (e.g. conditional value-at-risk,
optimized certainty equivalent, absolute semi-deviation, and functionally
coherent risk measures) fit into our framework. We analyze RaQL and
establish both its almost sure convergence as well as its convergence
rate ($\Omega((\ln(1/\delta\epsilon)/\epsilon^{2})^{1/k}+(\ln(1/\epsilon))^{1/(1-k)})$
with probability $1-\delta$, where $\epsilon>0$ is the error tolerance
and $k\in(1/2,\,1]$ is the learning rate). For the case of a linear
learning rate, we get an explicit convergence rate ($\Omega(1/\epsilon)$)
in expectation. Our experiments confirm the almost sure convergence
of RaQL, and also show that RaQL has a convergence rate comparable
to classical $Q$-learning in terms of the required number of outer
loops. Additionally, our experiments illustrate the computational
advantages of RaQL compared with some alternative methods for solving
risk-aware MDPs.

In future research, we will explore new methods for speeding up the
risk estimation subroutine to reduce the overall computational complexity,
and we will also extend RaQL to handle continuous state and action
spaces by incorporating $Q$-value function approximation techniques.

\section*{Appendix}

\noun{Proof of Theorem \ref{Theorem 3.2-1}:} Let $P$ denote the
probability distribution of $X$ and construct $\{h_{z}\}_{z\in\mathcal{Z}}$
satisfying conditions (i)-(iv) in the statement of the theorem. The
stochastic saddle-point formulation in (\ref{Construction G}) is
then
\begin{equation}
\rho(X)=\min_{y\in[\eta_{\min},\,\eta_{\max}]}\max_{z\in\mathcal{Z}}\left\{ y+\text{\ensuremath{\mathbb{E}}}_{P}\left[h_{z}(X,\,y)\right]\right\} .\label{Convex final-1}
\end{equation}
Since $h_{z}$ is $P$-square summable for every $y\in\mathcal{Y}$
and $z\in\text{\ensuremath{\mathcal{Z}}}$, the corresponding function
$G$ (\ref{Construction G}) satisfies Assumption \ref{Assumption 3.3}(i).
Concavity of $h_{z}$ in $z\in\mathcal{Z}$ leads to this function
$G$ satisfying Assumption \ref{Assumption 3.3}(iii). Lipschitz continuity
of $h_{z}$ with modulus $K_{G}-1$ implies that this function $G$
satisfies Assumption \ref{Assumption 3.3}(ii). The condition that
the subgradients of $h_{z}(X-y)$ on $z\in\mathcal{Z}$ and $y\in\mathcal{Y}$
are Borel measurable and uniformly bounded, for any $X\in\mathcal{L}$,
implies that this $G$ satisfies Assumption \ref{Assumption 3.3}(iv). 

Next, we prove that formulation (\ref{Convex final-1}) is a convex
risk measure. Let $\phi(\cdot)$ denote a continuous and subdifferentiable
$\phi$-divergence function for the distance between two probability
distributions $P$ and $P^{\prime}$. We refer to \cite[Table 5]{Postek2016}
for examples of $\phi$-divergence functions. Recall the Fenchel dual
representation of convex risk measures, 
\begin{equation}
\rho(X)=\sup_{P^{\prime}\in\mathbb{P}}\left\{ \mathbb{E}_{P^{\prime}}[X]-\mu(P^{\prime})\right\} ,\label{Convex risk-1}
\end{equation}
where $\mu$ is a convex function satisfying $\inf_{P^{\prime}\in\mathbb{P}}\mu(P^{\prime})=0$,
and $\mathbb{P}$ is the $\phi$-divergence risk envelope,
\[
\mathbb{P}:=\left\{ P^{\prime}:\,P^{\prime}\geq0,\,1^{\top}P^{\prime}=1,\,\int_{\Omega}\phi\left(\frac{dP^{\prime}}{dP}\right)dP\leq\beta\right\} ,
\]
consisting of all probability distributions with $\phi$-divergence
from $P$ bounded by $\beta>0$. Let $\phi^{\ast}$ denote the convex
conjugate of $\phi$ defined as $\phi^{\ast}(X):=\sup_{P^{\prime}\in\mathbb{P}}\left\{ \mathbb{E}_{P^{\prime}}\left[X\right]-\phi(P^{\prime})\right\} $.
Based on the results for $\phi$-divergence risk envelopes constructed
in \cite{Ben-Tal2013,Ben-Tal2015,Postek2016}, any convex risk measure
(\ref{Convex risk-1}) with corresponding set $\mathbb{P}$ can be
reformulated as:
\begin{equation}
\rho(X)=\inf_{b\geq0,\,y\in\mathbb{R}}\left\{ y+b\text{\ensuremath{\beta}}+b\text{\ensuremath{\mathbb{E}}}_{P}\left[\phi^{*}\left(\frac{X-y}{b}\right)\right]\right\} .\label{Counterpart-1}
\end{equation}
Inspired by the minimax risk measure investigated in \cite{Shapiro2002,Shapiro2004,Shapiro2005,Shapiro2012},
we develop an extended variant for (\ref{Counterpart-1}). Let $\phi_{z}$
denote a family of divergence functions parameterized by $z\in\mathcal{Z}$
that is concave in $z\in\mathcal{Z}$ for all fixed $X\in\mathcal{L}$,
and let $\phi_{z}^{\ast}$ denote their corresponding convex conjugates.
Define
\[
\mathbb{P}_{z}:=\left\{ P^{\prime}:\,P^{\prime}\geq0,\,1^{\top}P^{\prime}=1,\,\int_{\Omega}\phi_{z}\left(\frac{dP^{\prime}}{dP}\right)dP\leq\beta\right\} ,
\]
to be the set of probability distributions with bounded divergence
with respect to $\phi_{z},\,z\in\mathcal{Z}$, and set $\mathbb{P}_{\mathcal{Z}}=\bigcup_{z\in\mathcal{Z}}\mathbb{P}_{z}$.
The equivalent form for (\ref{Convex risk-1}) with the set $\mathbb{P}_{\mathcal{Z}}$
is now
\begin{equation}
\rho(X)=\min_{b\geq0,\,y\in\mathbb{R}}\max_{z\in\mathcal{Z}}\left\{ y+b\text{\ensuremath{\beta}}+b\text{\ensuremath{\mathbb{E}}}_{P}\left[\phi_{z}^{*}\left(\frac{X-y}{b}\right)\right]\right\} .\label{Counterpart}
\end{equation}
To complete the proof, given a constructed $\{h_{z}\}_{z\in\mathcal{Z}}$,
and if we choose the $\phi$-divergence function with its convex conjugate
$\phi^{\ast}$ satisfying
\[
\phi_{z}^{*}\left(\frac{X-y}{b}\right)=\frac{h_{z}(X,\,y)}{b}-\beta,
\]
for any $y\in\mathbb{R}$ and $b\geq0$, then the formulation (\ref{Counterpart})
is equivalent to formulation (\ref{Convex final-1}). Thus formulation
(\ref{Convex final-1}) is a convex risk measure. \\
\noun{}\\
\noun{Proof of Theorem \ref{Theorem 4.6}}: In this part, we detail
the procedures to prove Theorem \ref{Theorem 4.6}. As a remark, the natural logarithm term $e$ in (\ref{Difference})
of Lemma \ref{Lemma 5.8-1} is specific to a polynomial learning rate,
while for a linear learning rate we have a new relationship between
$\tau_{m}$ and $\tau_{m+1}$. Thus, we must construct a different
bound on $\{Y_{T}^{n,\tau_{m}}\}_{\tau_{m}\leq n\leq N}$, which is
defined in Eq. (\ref{Process Y-1}). We first derive
the convergence rate of process $Y_{T}^{n,\tau_{m}}(s,\,a)$ w.r.t.
$T$. We prove the result by applying the same argument as in the
proof of Lemma \ref{Lemma 5.8-1}, and combining the arguments of
\cite[Lemma 22]{Even-Dar2004}.
\begin{lem}
Given any $m\geq1$, assume that for all $n\geq\tau_{m}$ we have
$Y_{T}^{n,\tau_{m}}\leq D_{m}$. Then for any $n\geq(2+\Psi)\tau_{m}=\tau_{m+1}$,
we have, 
\[
Y_{T}^{n,\tau_{m}}(s,\,a)\leq D_{m}\left(K_{G}\left\{ 1-\gamma-\sqrt{\frac{C(\tau_{\ast}(T))^{-\alpha}}{\kappa(1-C(\tau_{\ast}(T))^{-\alpha}K_{\psi}^{(1)}\kappa)}}-K_{G}K_{S}\right\} +\frac{2}{2+\Psi}\beta_{T}\right),
\]
for all $(s,\,a)\in\mathbb{K}$, where $\beta_{T}$ is defined in
(\ref{Beta-1}) with $T$ satisfying conditions (\ref{C1}) and (\ref{C2}),
$\Psi$ is any positive constant and $K_{s}$ is defined in (\ref{Relationship}). 
\end{lem}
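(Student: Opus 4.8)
The plan is to mirror the proof of Lemma \ref{Lemma 5.8-1} for the polynomial rate, changing only the two ingredients that are genuinely specific to $k<1$: the decay estimate for the transient term and the induced relationship between $\tau_m$ and $\tau_{m+1}$. I would begin from the recursion (\ref{Process Y-1}) for $Y_T^{n+1,\tau_m}$ with $k=1$, and under the standing hypotheses $\|Q_T^{n-1}-Q^\ast\|_2\le D_m$ and $Y_T^{n,\tau_m}\le D_m$ (both valid for $n\ge\tau_m$) substitute the two convergence bounds already in hand: $\|(y^\ast,z^\ast)-(y^{n,\ast},z^{n,\ast})\|_2\le K_S K_G D_m$ from Lemma \ref{Saddle convergence}, and $\|(y^{n,t},z^{n,t})-(y^{n,\ast},z^{n,\ast})\|_2\le \sqrt{C(\tau_\ast(T))^{-\alpha}/[\kappa(1-C(\tau_\ast(T))^{-\alpha}K_\psi^{(1)}\kappa)]}\,D_m$ obtained by taking square roots in Lemma \ref{Lemma 5.5-1}. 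This collapses the forcing term in (\ref{Process Y-1}) to a constant multiple of $D_m$, so that $Y_T^{n,\tau_m}$ obeys a scalar affine recursion $Y^{n+1}\le(1-\theta_1^n)Y^n+\theta_1^n\bar Y$ with $\bar Y=K_G D_m\bigl(\gamma+\sqrt{C(\tau_\ast(T))^{-\alpha}/[\kappa(1-C(\tau_\ast(T))^{-\alpha}K_\psi^{(1)}\kappa)]}+K_G K_S\bigr)$, where I use $\gamma<1$ to absorb the stray $\gamma K_S K_G$ into $K_G K_S$.

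Next I would split off the transient. Setting $g_n:=Y_T^{n,\tau_m}-\bar Y$ gives $g_{n+1}=(1-\theta_1^n)g_n$, hence $g_n=g_{\tau_m}\prod_{l=1}^{n-\tau_m}(1-\theta_1^{l+\tau_m})$ with $g_{\tau_m}=2\beta_T D_m$ by the identification of $\beta_T$ in (\ref{Beta-1}). This is exactly where the linear rate diverges from the polynomial one: instead of the estimate $(1-\tau_m^{-k})^{\tau_m^{k}}\le 1/e$ used in (\ref{Difference}), the choice $k=1$ makes the step-size product telescope, $\prod_{l=1}^{n-\tau_m}(1-1/(l+\tau_m))=\tau_m/n$, a clean geometric-in-$n$ decay rather than the fixed factor $1/e$. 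Imposing the new horizon $n\ge(2+\Psi)\tau_m=\tau_{m+1}$ then yields $\tau_m/n\le 1/(2+\Psi)$, so that $g_n\le \tfrac{2}{2+\Psi}\beta_T D_m$, and adding back $\bar Y$ produces the asserted bound. The free parameter $\Psi$ plays the role that $1/e$ played before: it must be selected so that, once combined in the final step with the high-probability bound on $|Z_T^{n,\tau_m}|$ from Lemma \ref{Lemma 5.9-1} and with $\bar Y$, the totals telescope to the contraction $D_{m+1}=(1-\beta_T)D_m$; this is also why the admissibility conditions (\ref{C1})--(\ref{C2}) on $T$, which guarantee $0<\beta_T<1$, carry over verbatim.

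The main obstacle is justifying the telescoping product in the genuinely asynchronous algorithm, where $\theta_1^n(s,a)=1/\#(s,a,n)$ updates $(s,a)$ only on visited iterations and is zero otherwise, so the naive identity $\prod_{l=1}^{n-\tau_m}(1-1/(l+\tau_m))=\tau_m/n$ is really stated in terms of per-pair visit counts rather than the global iteration index $n$. I would close this gap exactly as in \cite[Lemma 22]{Even-Dar2004}: Assumption \ref{Assumption 4.2} together with the $\varepsilon$-greedy policy of Definition \ref{Definition 4.4 } forces every state-action pair to be visited a number of times comparable, up to the $(1-\varepsilon)$ factors already tracked in Lemma \ref{Lemma 5.9-1}, to the elapsed outer iterations, which is what converts the per-pair visit-count telescoping into the stated bound in terms of $\tau_m$ and $n=(2+\Psi)\tau_m$. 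The remaining bookkeeping -- verifying the base case $Y_T^{\tau_m,\tau_m}=D_m$ and confirming that the constant forcing $\bar Y$ dominates the exact recursion term-by-term under the inductive hypothesis -- is routine and identical in form to the polynomial argument of Lemma \ref{Lemma 5.8-1}.
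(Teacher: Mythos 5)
Your proposal follows essentially the same route as the paper, which only sketches this lemma as ``the same argument as Lemma \ref{Lemma 5.8-1} combined with \cite[Lemma 22]{Even-Dar2004}'': the one genuinely new ingredient is the telescoping product $\prod_{l=1}^{n-\tau_m}\bigl(1-1/(l+\tau_m)\bigr)=\tau_m/n\le 1/(2+\Psi)$ for $n\ge(2+\Psi)\tau_m$, which you identify and justify correctly, including the asynchronous visit-count caveat handled via Assumption \ref{Assumption 4.2}. The only discrepancy is that your final bound $K_G D_m\bigl(\gamma+\sqrt{C(\tau_\ast(T))^{-\alpha}/[\kappa(1-C(\tau_\ast(T))^{-\alpha}K_\psi^{(1)}\kappa)]}+K_GK_S\bigr)+\tfrac{2}{2+\Psi}\beta_T D_m$ is the direct analogue of Lemma \ref{Lemma 5.8-1} and is the form that makes the downstream telescoping to $D_{m+1}=(1-\beta_T)D_m$ go through, whereas the lemma as printed has $K_G\{1-\gamma-\cdots-K_GK_S\}=2\beta_T$ in the first slot; that appears to be a misprint in the statement rather than a gap in your argument.
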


The following Lemma enables the use of Azuma-Hoeffding inequality.
Lemma \ref{Lemma 8.2 } can be prove by applying the same argument
in Lemma \ref{Lemma 5.2}, where we set $k=1$ because of linear learning
rate.
\begin{lem}
\label{Lemma 8.2 } For any $n\geq\tau_{m}$ and $1\leq l\leq n$
we have that $\{Z_{t}^{l,\tau_{m}}(s,\,a)\}_{l=1,...,n}$ is a martingale
difference sequence, which satisfies $|Z_{t}^{l,\tau_{m}}(s,\,a)-Z_{t}^{l-1,\tau_{m}}(s,\,a)|\leq\frac{V_{\max}}{(1-\varepsilon)\tau_{m}},$
for any $t\leq T$.
\end{lem}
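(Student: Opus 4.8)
The plan is to obtain this lemma as the linear-learning-rate specialization ($k=1$) of Lemma~\ref{Lemma 5.2}, following the structure of \cite[Lemma~28]{Even-Dar2004} and re-examining only the two places where the exponent $k$ enters; the genuinely linear-specific features (the multiplicative relation $\tau_{m+1}=(2+\Psi)\tau_m$) live in the companion $Y$-process lemma and not here. Recall from the recursion (\ref{Process W-1}) that $Z_t^{n+1,\tau}(s,\,a)=(1-\theta_k^n(s,\,a))Z_t^{n,\tau}(s,\,a)+\theta_k^n(s,\,a)\zeta_{t+1}^n(s,\,a)$ with $Z_t^{\tau,\tau}=0$ and $\zeta_{t+1}^n(s,\,a)=Q^\ast(s,\,a)-H(v^\ast,\,y^\ast,\,z^\ast)(s,\,a)$. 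Unrolling this recursion, as in Lemma~\ref{Lemma 5.2}, writes $Z_t^{l,\tau_m}(s,\,a)=\sum_{i=1}^l\tilde w_{i+\tau_m}^n(s,\,a)$ with $\tilde w_{i+\tau_m}^n=\eta_i^{m,n}\,\zeta_{t+1}^{i+\tau_m}$ and $\eta_i^{m,n}=\theta_k^{i+\tau_m}\prod_{j=i+\tau_m+1}^n(1-\theta_k^j)$, so that the increment of interest is exactly $Z_t^{l,\tau_m}(s,\,a)-Z_t^{l-1,\tau_m}(s,\,a)=\tilde w_{l+\tau_m}^n(s,\,a)$.

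For the martingale-difference property I would invoke the fixed-point identity (\ref{stable}), which gives $\mathbb{E}[\zeta_{t+1}^n(s,\,a)\mid\mathcal{G}_{t+1}^{n-1}]=0$; since each coefficient $\eta_i^{m,n}$ is adapted to the filtration and $\zeta_{t+1}^{i+\tau_m}$ has zero conditional mean given the preceding history, the partial sums $\{Z_t^{l,\tau_m}\}_l$ form a martingale with mean-zero increments. For the increment bound I would combine three estimates: (i) $|\zeta_{t+1}^{l+\tau_m}(s,\,a)|\le V_{\max}$, since $\zeta$ is the centered noise of $H(v^\ast,\,y^\ast,\,z^\ast)$, which together with $Q^\ast$ ranges over an interval of width $V_{\max}$ by boundedness of the $Q$-values (Assumption~\ref{Assumption 2.1}(ii), with $V_{\max}=C_{\max}/(1-\gamma)$); (ii) $\prod_{j}(1-\theta_k^j)\le1$ because every step-size lies in $[0,\,1]$, so $\eta_i^{m,n}\le\theta_k^{i+\tau_m}(s,\,a)$; and (iii) the step-size estimate $\theta_1^{i+\tau_m}(s,\,a)=1/\#(s,\,a,\,i+\tau_m)\le 1/[(1-\varepsilon)\tau_m]$ obtained by setting $k=1$ in Assumption~\ref{Assumption 4.5}. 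Multiplying these yields $|\tilde w_{l+\tau_m}^n(s,\,a)|\le V_{\max}/[(1-\varepsilon)\tau_m]$, which is the claimed bound and is precisely the $k=1$ instance of $((1-\varepsilon)^k\tau_m^k)^{-1}V_{\max}$ from Lemma~\ref{Lemma 5.2}.

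The main obstacle is estimate (iii), the visit-count lower bound $\#(s,\,a,\,i+\tau_m)\ge(1-\varepsilon)\tau_m$, which is the source of the $(1-\varepsilon)$ factor in the denominator. Because RaQL is asynchronous and updates a single state-action pair per outer iteration, I would argue that under the $\varepsilon$-greedy policy of Definition~\ref{Definition 4.4 } the relevant pair is selected with probability at least $1-\varepsilon$ at each outer step, so its cumulative count over the $\tau_m$ elapsed iterations is bounded below by $(1-\varepsilon)\tau_m$; this is the same count bound that already underlies Lemma~\ref{Lemma 5.2}, and the only care needed beyond it is to verify the estimate holds uniformly over the window $i\ge0$ and for every $t\le T$, so that $c_i=V_{\max}/[(1-\varepsilon)\tau_m]$ is a valid almost-sure ceiling on each increment for the subsequent Azuma--Hoeffding step.
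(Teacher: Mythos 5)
Your proposal is correct and follows essentially the same route as the paper: the paper's own proof is a one-line appeal to the argument of Lemma~\ref{Lemma 5.2} together with the observation that for $k=1$ the coefficient $\eta_{l+\tau_m}^{m,n}(s,\,a)$ is bounded by $1/((1-\varepsilon)\tau_m)$, which is exactly the $k=1$ specialization you carry out in detail. Your elaboration of the martingale-difference property via (\ref{stable}) and the three-factor bound on the increment is a faithful (and more explicit) rendering of what the paper leaves implicit.
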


\begin{lem}
\label{Lemma 5.14} Given $0<\delta<1$, we have (i)
\begin{equation}
\mathbb{P}\left[\forall n\in[\tau_{m+1},\,\tau_{m+2}]:\,Z_{T}^{n,\tau}(s,\,a)\leq\frac{\Psi}{2+\Psi}\beta_{T}D_{m}\right]\geq1-\delta(1-\varepsilon),\label{statement}
\end{equation}
where \textup{$\tau_{m}=\Theta\left(\frac{V_{\max}^{2}\ln(V_{\max}|\mathbb{S}||\mathbb{A}|/[\Psi\delta\beta_{T}D_{m}(1-\varepsilon)])}{\Psi^{2}\beta_{T}D_{m}^{2}(1-\varepsilon)^{2}}\right)$},
and (ii)
\[
\mathbb{P}\left[\forall m\in[1,\,\frac{1}{1-\varepsilon}],\,\forall n\in[\tau_{m+1},\,\tau_{m+2}]:\,|Z_{T}^{n,\tau}|\leq\tilde{\varepsilon}\right]\geq1-\delta,
\]
where $\tau_{0}=\Theta\left(\frac{V_{\max}^{2}\ln(V_{\max}|\mathbb{S}||\mathbb{A}|/[\Psi\delta\beta_{T}\tilde{\varepsilon}(1-\varepsilon)])}{\Psi^{2}\beta_{T}\tilde{\varepsilon}^{2}}\right)$. 
\end{lem}

\begin{proof}
Following the proofs of \cite[Lemma 37]{Even-Dar2004} and \cite[Lemma 38]{Even-Dar2004},
we set$c_{i}=\Theta\left(\frac{V_{\max}}{(1-\text{\ensuremath{\varepsilon})}\tau_{m}}\right)$,
for any $n\geq\tau_{m+1}$, therefore we obtain
\begin{align*}
\mathbb{P}\left[|Z_{T}^{n,\tau}|\geq\tilde{\varepsilon}\right] & \leq2\exp\left(\frac{-\tilde{\varepsilon}^{2}}{2\sum_{i=\tau_{m}+1,\,i\in N^{s,a}}^{n}c_{i}^{2}}\right)\leq2\exp\left(-c\frac{\tilde{\varepsilon}^{2}\tau_{m}(1+\Psi)}{V_{\max}^{2}}\right),
\end{align*}
for some constant $c>0$. Let us define the following variable
\[
\Xi^{n}(s,\,a)=\begin{cases}
1, & \text{\ensuremath{\theta}}_{t,k}^{n}\neq0\\
0 & \textrm{otherwise},
\end{cases}
\]
where $k$ is fixed. Using the union bound and the fact in an interval
of length $\frac{1+\Psi}{1-\varepsilon}\tau_{m}$, each state-action
pair is visited at least $(1+\Psi)\tau_{m}$ times with certainty
according to \cite[Lemma 37]{Even-Dar2004}, we get
\begin{align*}
\mathbb{P}\left[\forall n\in[\tau_{m+1},\,\tau_{m+2}]:\,|Z_{T}^{n,\tau_{m}}|\geq\tilde{\varepsilon}\right] & \leq\mathbb{P}\left[\forall n\geq(\frac{1+\Psi}{1-\varepsilon}+1)\tau_{m}:\,|Z_{T}^{n,\tau_{m}}|\geq\tilde{\varepsilon}\right]\\
 & \leq\sum_{n=((1+\Psi)/(1-\varepsilon)+1)\tau_{m}}^{\infty}\mathbb{P}\left[|Z_{T}^{n,\tau_{m}}(s,\,a)|\geq\tilde{\varepsilon}\right]\\
 & \leq2\sum_{n=((1+\Psi)/(1-\varepsilon)+1)\tau_{m}}^{\infty}\Xi^{n}(s,\,a)\exp\left(-c\frac{\tau_{m}(1+\Psi)\tilde{\varepsilon}^{2}}{V_{\max}^{2}}\right)\\
 & \leq2\exp\left(-c\frac{((1+\Psi)\tau_{m})\tilde{\varepsilon}^{2}}{V_{\max}^{2}}\right)\sum_{n=0}^{\infty}\exp\left(-\frac{n\tilde{\varepsilon}^{2}}{2V_{\max}^{2}}\right)\\
 & =\frac{2\exp\left(-c\frac{(1+\Psi)\tau_{m}\tilde{\varepsilon}^{2}}{V_{\max}^{2}}\right)}{1-\exp\left(-\frac{\tilde{\varepsilon}^{2}}{V_{\max}^{2}}\right)}\\
 & =\Theta\left(\frac{\exp\left(-\frac{c^{\prime}\tau_{m}\tilde{\varepsilon}^{2}}{V_{\max}^{2}}\right)}{\tilde{\varepsilon}^{2}}V_{\max}^{2}\right),
\end{align*}
for some positive constant $c^{\prime}$. Controlling $\delta$ by
setting
\[
\frac{\delta(1-\varepsilon)}{|\mathbb{S}||\mathbb{A}|}=\Theta\left(\frac{\exp\left(-\frac{c^{\prime}\tau_{m}\tilde{\varepsilon}^{2}}{V_{\max}^{2}}\right)}{\tilde{\varepsilon}^{2}}V_{\max}^{2}\right),
\]
which holds for $\tau_{m}=\Theta\left(\frac{V_{\max}^{2}\ln(V_{\max}|\mathbb{S}||\mathbb{A}|/(\delta D_{m}(1-\varepsilon))}{\beta_{T}D_{m}}\right)$,
and $\tilde{\varepsilon}=\frac{\Psi}{2+\Psi}\beta_{T}D_{m}$ assures
us that for every $t\geq\tau_{m+1}$ with probability at least $1-\delta(1-\varepsilon)$
the statement (\ref{statement}) holds at every state-action pair.
\end{proof}
Theorem \ref{Theorem 4.6} follows from Lemma \ref{Lemma 5.14}, and
the algebraic identity in the proof of \cite[Theorem 5]{Even-Dar2004}
that
\[
a_{k+1}=a_{k}+\frac{(1+\Psi)}{1-\varepsilon}a_{k}=a_{0}(\frac{(1+\Psi)}{1-\varepsilon}+1)^{k}.
\]
The detailed proof follows the same procedures as the proof of Theorem
\ref{Theorem 4.5-1}. \\
\noun{}\\
\noun{Proof of Theorem \ref{Theorem 4.7}}: To start, we investigate
the convergence rate of risk estimation step w.r.t. $t$ by stochastic
approximation. We first refer to the convergence rate analysis of
Algorithm 2 in \cite{Nemirovski2005}. As a measure of the quality
of a candidate solution $(y,\,z)\in\mathcal{Y}\times\mathcal{Z}$,
we use the duality gap $d(y,\,z)$ proposed by \cite[Section 2.1.2]{Nemirovski2005}.
Let $\overline{\Phi}(y)=\max_{z\in\mathcal{Z}}\mathbb{E}_{P}\left[G(X,\,y,\,z)\right]$,
and $\underline{\Phi}(z)=\min_{y\in\mathcal{Y}}\mathbb{E}_{P}\left[G(X,\,y,\,z)\right]$,
for any fixed $X\in\mathcal{L}$, and
\begin{align*}
d(y,\,z) & :=[\overline{\Phi}(y)-\min_{y\in\mathcal{Y}}\overline{\Phi}(y)]+[\max_{z\in\mathcal{Z}}\underline{\Phi}(z)-\underline{\Phi}(z)]=\overline{\Phi}(y)-\underline{\Phi}(z).
\end{align*}
The next theorem gives the convergence rate of SASP.
\begin{thm}
\label{Theorem 3.2}\cite[Theorem 1]{Nemirovski2005} Suppose Assumption
\ref{Assumption 3.3} holds, set the step-size for all $t$ as $\lambda_{t,\alpha}=Ct^{-\alpha},\,\alpha\in(0,\,1]$,
then for every $t>1$, we have
\begin{align}
d(y^{t},\,z^{t})\leq & \left[K_{\mathcal{Y}}H_{\mathcal{Y}}^{-1}+K_{\mathcal{Z}}H_{\mathcal{Z}}^{-1}\right]\frac{t^{\alpha}}{C\left(t-\tau_{\ast}(t)+1\right)}+\frac{(K_{\mathcal{Y}}+K_{\mathcal{Z}})L}{\sqrt{t-\tau_{\ast}(t)+1}}\nonumber \\
 & +C(K_{\mathcal{Y}}+K_{\mathcal{Z}})^{2}L^{2}\left[H_{\mathcal{Y}}K_{\mathcal{Y}}+H_{\mathcal{Z}}K_{\mathcal{Z}}\right]\tau_{\ast}^{-\alpha}(t).\label{Offline bound-1}
\end{align}
\end{thm}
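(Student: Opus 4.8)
The plan is to reconstruct the robust stochastic-approximation (primal--dual subgradient) argument of \cite{Nemirovski2005}, since Algorithm 2 (SASP) is exactly a projected stochastic saddle-point scheme with uniform averaging over the window $[\tau_{\ast}(n),\,n]$. The natural geometry is the $H$-weighted Euclidean one, i.e.\ I would measure distances in the norm under which $(y,z)\mapsto(H_{\mathcal{Y}}^{-1/2}y,\,H_{\mathcal{Z}}^{-1/2}z)$ is an isometry, so that the scaled field $\psi(x;y,z)=(H_{\mathcal{Y}}G_{y},\,-H_{\mathcal{Z}}G_{z})$ is the matching (sub)gradient. This weighting is what causes the diameters $K_{\mathcal{Y}},K_{\mathcal{Z}}$ to appear divided by $H_{\mathcal{Y}},H_{\mathcal{Z}}$ in the first term of the bound and multiplied by them in the third.

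First I would write the one-step inequality. Fix any reference $w=(y,z)\in\mathcal{Y}\times\mathcal{Z}$. By non-expansiveness of $\Pi_{\mathcal{Y}\times\mathcal{Z}}$ and expanding the square (exactly as in the projection step of Lemma \ref{Lemma 5.5-1}),
\[
\|(y_{\tau+1},z_{\tau+1})-w\|_{2}^{2}\le\|(y_{\tau},z_{\tau})-w\|_{2}^{2}-2\lambda_{\tau}\langle\psi(x_{\tau};y_{\tau},z_{\tau}),\,(y_{\tau},z_{\tau})-w\rangle+\lambda_{\tau}^{2}\|\psi(x_{\tau};y_{\tau},z_{\tau})\|_{2}^{2}.
\]
Next I would split $\psi(x_{\tau};y_{\tau},z_{\tau})=\bar\psi(y_{\tau},z_{\tau})+\Delta_{\tau}$, where $\bar\psi:=(H_{\mathcal{Y}}\mathbb{E}_{P}G_{y},\,-H_{\mathcal{Z}}\mathbb{E}_{P}G_{z})$ is the exact monotone operator and $\Delta_{\tau}$ is a martingale difference (zero conditional mean), using the interchange of expectation and subdifferential guaranteed under Assumption \ref{Assumption 3.3}(i). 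Convexity of $\mathbb{E}_{P}G$ in $y$ and concavity in $z$ (Assumption \ref{Assumption 3.3}(iii)) give the monotonicity estimate $\langle\bar\psi(y_{\tau},z_{\tau}),\,(y_{\tau},z_{\tau})-w\rangle\ge\mathbb{E}_{P}G(y_{\tau},z)-\mathbb{E}_{P}G(y,z_{\tau})$ (up to the $H$-weights), which lower-bounds each inner product by the saddle-gap of $G$ at the running iterate against $w$.

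I would then sum the one-step inequality over $\tau=\tau_{\ast}(n),\dots,n$, telescope the distance terms, and divide by the window length $n-\tau_{\ast}(n)+1$. Jensen (again by convexity--concavity) on the uniform average $(y^{n},z^{n})$ collapses the left side to the gap at the averaged iterate, and taking the supremum over $w$---the maximum over $z\in\mathcal{Z}$ for the $y$-part and the minimum over $y\in\mathcal{Y}$ for the $z$-part---turns this into $\overline{\Phi}(y^{n})-\underline{\Phi}(z^{n})=d(y^{n},z^{n})$. The three stated terms then come from three sources: (a) the telescoped initial distance $\|(y_{\tau_{\ast}(n)},z_{\tau_{\ast}(n)})-w\|_{2}^{2}$, bounded by the weighted diameters and divided by $\lambda_{n}(n-\tau_{\ast}(n)+1)$, giving the $[K_{\mathcal{Y}}H_{\mathcal{Y}}^{-1}+K_{\mathcal{Z}}H_{\mathcal{Z}}^{-1}]\,n^{\alpha}/(C(n-\tau_{\ast}(n)+1))$ term since $1/\lambda_{n}=n^{\alpha}/C$; (b) the term $\sum_{\tau}\lambda_{\tau}^{2}\|\psi_{\tau}\|_{2}^{2}$, bounded via $\|\psi_{\tau}\|_{2}\le(H_{\mathcal{Y}}{+}H_{\mathcal{Z}})L$ from Assumption \ref{Assumption 3.3}(iv) and $\lambda_{\tau}=C\tau^{-\alpha}$, giving the $C(K_{\mathcal{Y}}+K_{\mathcal{Z}})^{2}L^{2}[H_{\mathcal{Y}}K_{\mathcal{Y}}+H_{\mathcal{Z}}K_{\mathcal{Z}}]\tau_{\ast}^{-\alpha}(n)$ term; and (c) the martingale term $\sum_{\tau}\lambda_{\tau}\langle\Delta_{\tau},(y_{\tau},z_{\tau})-w\rangle$.

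The main obstacle is controlling the martingale term (c) to obtain the $O(1/\sqrt{n-\tau_{\ast}(n)+1})$ rate. I would take expectations and use orthogonality of the increments, $\mathbb{E}\|\sum_{\tau}\lambda_{\tau}\Delta_{\tau}\|_{2}^{2}=\sum_{\tau}\lambda_{\tau}^{2}\mathbb{E}\|\Delta_{\tau}\|_{2}^{2}$, together with the uniform subgradient bound $L$ and Cauchy--Schwarz against the weighted diameter; dividing by the window length produces the $(K_{\mathcal{Y}}+K_{\mathcal{Z}})L/\sqrt{n-\tau_{\ast}(n)+1}$ term. The one genuine bookkeeping subtlety is that Algorithm 2 uses a \emph{uniform} rather than $\lambda$-weighted average over $[\tau_{\ast}(n),n]$, so the passage through Jensen and the balancing of the three decaying contributions with $\lambda_{\tau}=C\tau^{-\alpha}$ must be done carefully; this is precisely the finite-window computation of \cite[Theorem 1]{Nemirovski2005}, whose argument I would follow to land the bound exactly as stated.
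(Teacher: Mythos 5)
The paper gives no proof of Theorem \ref{Theorem 3.2} at all: it is imported verbatim from \cite[Theorem 1]{Nemirovski2005}, so there is no in-paper argument to compare against, and your reconstruction must be judged against the cited source. Your outline is correct and is indeed the argument of that reference: the one-step projection inequality, the split of $\psi$ into the exact monotone field plus a conditionally centered noise term, the convex--concave monotonicity bound, telescoping over the window $[\tau_{\ast}(n),\,n]$, Jensen for the uniform average, and the identification of the three terms with (a) the initial squared distance scaled by $1/\lambda_{n}=n^{\alpha}/C$ and the window length, (b) the accumulated squared subgradient norms, whose window average is dominated by the largest step $\lambda_{\tau_{\ast}(n)}=C\,\tau_{\ast}^{-\alpha}(n)$, and (c) the martingale term. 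Two points you flag but would need to make explicit in a full write-up: first, the supremum over the reference point $w$ cannot be passed inside the martingale expectation directly --- one bounds $\sup_{w}\sum_{\tau}\langle\Delta_{\tau},\,w\rangle$ by the diameter times $\|\sum_{\tau}\Delta_{\tau}\|_{2}$ and then uses $\mathbb{E}\|\cdot\|_{2}\leq(\mathbb{E}\|\cdot\|_{2}^{2})^{1/2}$ with orthogonality of increments, which is what yields the $1/\sqrt{n-\tau_{\ast}(n)+1}$ rate; second, because the averaging is uniform while the steps decay, one should divide the one-step inequality by $\lambda_{\tau}$ before summing (Abel summation on the increasing weights $1/\lambda_{\tau}$) rather than summing the $\lambda$-weighted form, or the three contributions do not land in the stated shape. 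Neither is a gap in the plan, since you explicitly defer the finite-window bookkeeping to \cite[Theorem 1]{Nemirovski2005}; note also that the bound, as it is used later in Lemma \ref{Lemma 4.2}, is a bound in expectation, which is exactly the form your argument delivers.
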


\begin{lem}
\label{Lemma 4.2} Suppose Assumption \ref{Assumption 4.5} holds,
for all $(s,\,a)\in\mathbb{K},\,v\in\mathcal{V}$ and for every $t>1$
and $n\leq N$, we have the upper bound
\begin{align}
 & \mathbb{E}_{s^{\prime}\sim P(\cdot|s,a)}\left[|G\left(v(s^{\prime}),\,y^{n,t}(s,\,a),\,z^{n,t}(s,\,a)\right)-G\left(v(s^{\prime}),\,y^{n,\ast}(s,\,a),\,z^{n,\ast}(s,\,a)\right)|\right]\nonumber \\
\leq & \left[K_{\mathcal{Y}}H_{\mathcal{Y}}^{-1}+K_{\mathcal{Z}}H_{\mathcal{Z}}^{-1}\right]\frac{t^{\alpha}}{C\left(t-\tau_{\ast}(t)+1\right)}+\frac{(K_{\mathcal{Y}}+K_{\mathcal{Z}})L}{\sqrt{t-\tau_{\ast}(t)+1}}\nonumber \\
 & +C(K_{\mathcal{Y}}+K_{\mathcal{Z}})^{2}L^{2}\left[H_{\mathcal{Y}}K_{\mathcal{Y}}+H_{\mathcal{Z}}K_{\mathcal{Z}}\right]\tau_{\ast}^{-\alpha}(t).\label{Right hand}
\end{align}
\end{lem}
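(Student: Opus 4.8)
The plan is to read the left-hand side of (\ref{Right hand}) as a suboptimality statement about the averaged iterate $(y^{n,t},\,z^{n,t})$ for the expected convex-concave payoff
\[
\Phi(y,\,z):=\mathbb{E}_{s'\sim P(\cdot|s,a)}\left[G\left(v(s'),\,y,\,z\right)\right],
\]
and then to import the SASP duality-gap bound of Theorem \ref{Theorem 3.2} essentially verbatim. First I would note that $(y^{n,\ast},\,z^{n,\ast})$ is by construction a saddle point of $\Phi$, so $\mathbb{E}_{s'}[G(v(s'),\,y^{n,\ast},\,z^{n,\ast})]=\Phi(y^{n,\ast},\,z^{n,\ast})$ is exactly the saddle value, while $\mathbb{E}_{s'}[G(v(s'),\,y^{n,t},\,z^{n,t})]=\Phi(y^{n,t},\,z^{n,t})$. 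Hence, up to the placement of the absolute value discussed below, the quantity to be controlled is the value gap between the payoff attained by the running average and the saddle value.

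I would bound this value gap by the duality gap $d(y^{n,t},\,z^{n,t})=\overline{\Phi}(y^{n,t})-\underline{\Phi}(z^{n,t})$ via the standard sandwiching argument: both $\Phi(y^{n,t},\,z^{n,t})$ and the saddle value lie in the interval $[\underline{\Phi}(z^{n,t}),\,\overline{\Phi}(y^{n,t})]$, since $\underline{\Phi}(z^{n,t})=\min_{y}\Phi(y,\,z^{n,t})\le\Phi(y^{n,t},\,z^{n,t})\le\max_{z}\Phi(y^{n,t},\,z)=\overline{\Phi}(y^{n,t})$, and the saddle value equals $\max_{z}\underline{\Phi}(z)=\min_{y}\overline{\Phi}(y)$, which again lies between $\underline{\Phi}(z^{n,t})$ and $\overline{\Phi}(y^{n,t})$. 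The value gap is therefore at most $d(y^{n,t},\,z^{n,t})$. Since Step 2 of Algorithm 1 produces $(y^{n,t},\,z^{n,t})$ by precisely the moving-average SASP recursion of Algorithm 2 with step sizes $\lambda_{t,\alpha}=Ct^{-\alpha}$, and Assumption \ref{Assumption 3.3} supplies all hypotheses required (square-summability, Lipschitz continuity, convexity-concavity, and bounded Borel-measurable subgradients), Theorem \ref{Theorem 3.2} yields $d(y^{n,t},\,z^{n,t})\le f(t)$, which is the claimed right-hand side.

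The main obstacle I expect is reconciling the absolute value, which in (\ref{Right hand}) sits inside the expectation whereas the duality-gap argument naturally controls $|\mathbb{E}_{s'}[\,\cdot\,]|$, i.e. the modulus outside. To close this I would invoke the Lipschitz continuity of $G$ from Assumption \ref{Assumption 3.3}(ii), giving the pointwise estimate $|G(v(s'),\,y^{n,t},\,z^{n,t})-G(v(s'),\,y^{n,\ast},\,z^{n,\ast})|\le K_{G}\|(y^{n,t},\,z^{n,t})-(y^{n,\ast},\,z^{n,\ast})\|_{2}$ uniformly in $s'$, and then exploit the additive form $G(X,\,y,\,z)=y+h_{z}(X-y)$ from Theorem \ref{Theorem 3.2-1}, under which the deviation of the integrand is driven by the displacement in $(y,\,z)$ rather than by $v(s')$; integrating the pointwise bound and matching constants against the terms already appearing in $f(t)$ should deliver the same rate. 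I regard this conversion, rather than the duality-gap estimate itself, as the delicate part of the argument.
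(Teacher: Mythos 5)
Your proposal follows essentially the same route as the paper: the paper also bounds the left-hand side by the duality gap $d(y^{n,t},\,z^{n,t})=\overline{\Phi}(y^{n,t})-\underline{\Phi}(z^{n,t})$ of the averaged SASP iterate (invoking only "the triangle inequality" for that step) and then applies Theorem \ref{Theorem 3.2} to bound the gap by $f(t)$. The one delicate point you flag — that the absolute value sits inside the expectation, so the duality-gap sandwich does not immediately apply — is a genuine issue, but the paper's own proof glosses over it just as your proposal does, so your treatment is, if anything, more explicit than the original.
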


\begin{proof}
By the triangle inequality, we know that for all $(s,\,a)\in\mathbb{K},\,v\in\mathcal{V}$
and for every $t>1$ and $n\leq N$,
\begin{align}
 & \mathbb{E}_{s^{\prime}\sim P(\cdot|s,a)}\left[|G\left(v(s^{\prime}),\,y^{n,t}(s,\,a),\,z^{n,t}(s,\,a)\right)-G\left(v(s^{\prime}),\,y^{n,\ast}(s,\,a),\,z^{n,\ast}(s,\,a)\right)|\right]\nonumber \\
\leq & \max_{z\in\mathcal{Z}}\mathbb{E}_{s^{\prime}\sim P(\cdot|s,a)}\left[G\left(v(s^{\prime}),\,y^{n,t}(s,\,a),\,z^{n,t}(s,\,a)\right)\right]\nonumber \\
 & -\min_{y\in\mathcal{Y}}\mathbb{E}_{s^{\prime}\sim P(\cdot|s,a)}\left[G\left(v(s^{\prime}),\,y^{n,t}(s,\,a),\,z^{n,t}(s,\,a)\right)\right].\label{13}
\end{align}
From Theorem \ref{Theorem 3.2}, we know that
\begin{align}
 & \max_{z\in\mathcal{Z}}\mathbb{E}_{s^{\prime}\sim P(\cdot|s,a)}\left[G\left(v(s^{\prime}),\,y^{n,t}(s,\,a),\,z^{n,t}(s,\,a)\right)\right] -\min_{y\in\mathcal{Y}}\mathbb{E}_{s^{\prime}\sim P(\cdot|s,a)}\left[G\left(v(s^{\prime}),\,y^{n,t}(s,\,a),\,z^{n,t}(s,\,a)\right)\right]\nonumber \\
\leq & \left[K_{\mathcal{Y}}H_{\mathcal{Y}}^{-1}+K_{\mathcal{Z}}H_{\mathcal{Z}}^{-1}\right]\frac{t^{\alpha}}{C\left(t-\tau_{\ast}(t)+1\right)}+\frac{(K_{\mathcal{Y}}+K_{\mathcal{Z}})L}{\sqrt{t-\tau_{\ast}(t)+1}}\nonumber \\
 & +C(K_{\mathcal{Y}}+K_{\mathcal{Z}})^{2}L^{2}\left[H_{\mathcal{Y}}K_{\mathcal{Y}}+H_{\mathcal{Z}}K_{\mathcal{Z}}\right]\tau_{\ast}^{-\alpha}(t).\label{Offline bound}
\end{align}
Thus we obtain the desired result. 
\end{proof}
The next lemma bounds $\mathbb{E}\left[\|Q_{t}^{n}-Q^{\ast}\|_{2}^{2}|\mathcal{G}_{t+1}^{n-1}\right]$
w.r.t. $n$, for any $t\leq T$. For simplicity, we use function $f(t)$
to denote the right hand side of (\ref{Right hand}).
\begin{lem}
\label{Lemma 5.16}The sequence $Q_{t}^{n}$, generated by Algorithm
1 satisfies, for any $\kappa_{0},\,\kappa>0$,
\begin{align*}
\mathbb{E}\left[\|Q_{t}^{n}-Q^{\ast}\|_{2}^{2}|\mathcal{G}_{t+1}^{n-1}\right] & \leq\left[1-\frac{1}{n^{k}}(2\varepsilon-\gamma K_{G}\kappa_{0}-\gamma K_{G}\kappa-\gamma K_{G}/\kappa_{0}-K_{S}\,K_{G}^{2}/\kappa)\right]\mathbb{E}\left[\|Q_{T}^{n-1}-Q^{\ast}\|_{2}^{2}|\mathcal{G}_{t+1}^{n-1}\right]\\
 & +\frac{(C_{G}+(\gamma\,f(t))^{2})}{n^{2k}},
\end{align*}
where $C_{G}$ bounds the term
\[
\left\{ 1+\frac{C}{\kappa(1-C\,K_{\psi}^{(1)}\kappa)}+\left[K_{G}(\gamma+K_{S}K_{G})\right]^{2}\right\} \|Q_{T}^{n-1}-Q^{\ast}\|_{2}^{2}\leq C_{G}.
\]
\end{lem}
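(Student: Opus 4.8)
The plan is to read the Step-3 update \eqref{Update} as a single stochastic-approximation recursion for the error $Q_t^n-Q^*$ and then compute a conditional second moment. Writing $\zeta_{t+1}^n:=Q^*-H(v^*,y^*,z^*)$ for the unbiased term, \eqref{Update} rearranges, for every $(s,a)\in\mathbb{K}$, to
\[
Q_t^n(s,a)-Q^*(s,a)=(1-\theta_k^n(s,a))\bigl(Q_T^{n-1}(s,a)-Q^*(s,a)\bigr)-\theta_k^n(s,a)\bigl(\xi_t^n(s,a)+\epsilon_t^n(s,a)+\zeta_{t+1}^n(s,a)\bigr).
\]
First I would square both sides, sum over $(s,a)$, and take $\mathbb{E}[\,\cdot\,|\,\mathcal{G}_{t+1}^{n-1}]$. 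Since $\mathbb{E}[\zeta_{t+1}^n\,|\,\mathcal{G}_{t+1}^{n-1}]=0$ by \eqref{stable}, the cross term pairing the (conditionally known) quantity $Q_T^{n-1}-Q^*$ with $\zeta_{t+1}^n$ drops out, leaving a leading contraction $\mathbb{E}[(1-\theta_k^n)^2|\cdot]\,\|Q_T^{n-1}-Q^*\|_2^2$, two cross terms in $\xi_t^n$ and $\epsilon_t^n$, and one quadratic noise term $\mathbb{E}[(\theta_k^n)^2\|\xi_t^n+\epsilon_t^n+\zeta_{t+1}^n\|_2^2\,|\,\cdot]$.

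For the leading term I would use the $\varepsilon$-greedy exploration guarantee of Assumption~\ref{Assumption 4.2}: each pair is updated with probability at least $\varepsilon$ at nominal rate $1/n^k$, so $\mathbb{E}[(1-\theta_k^n)^2|\cdot]\le 1-2\varepsilon/n^k+1/n^{2k}$, which supplies the $2\varepsilon/n^k$ piece of the advertised coefficient; on the \emph{positive} cross and noise contributions I would instead bound the update probability by $1$, so that no $\varepsilon$ survives there. The two cross terms I would treat by Cauchy--Schwarz followed by the Young inequality $2ab\le\kappa_0 a^2+b^2/\kappa_0$. For $\xi_t^n$ I would invoke \eqref{Induction} together with Lemma~\ref{Saddle convergence}, i.e. $|\xi_t^n|\le\gamma K_G(\|Q_T^{n-1}-Q^*\|_2+\|(y^*,z^*)-(y^{n,*},z^{n,*})\|_2)$ and $\|(y^*,z^*)-(y^{n,*},z^{n,*})\|_2\le K_S K_G\|Q_T^{n-1}-Q^*\|_2$, and split the two pieces using $\kappa_0$; for $\epsilon_t^n$ in the cross term I would use the second-moment bound \eqref{Boundedness} (the $t=1$ instance of \eqref{Risk bound} in Lemma~\ref{Lemma 5.5-1}) and split with $\kappa$. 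Collecting every $\|Q_T^{n-1}-Q^*\|_2^2$-coefficient then recovers the bracket $2\varepsilon-\gamma K_G\kappa_0-\gamma K_G\kappa-\gamma K_G/\kappa_0-K_S K_G^2/\kappa$ multiplying $1/n^k$.

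For the noise term I would bound $(\theta_k^n)^2\le 1/n^{2k}$ and expand $\|\xi_t^n+\epsilon_t^n+\zeta_{t+1}^n\|_2^2$. The contribution of $\xi_t^n$ (again via \eqref{Induction} and Lemma~\ref{Saddle convergence}), together with the $\|Q_T^{n-1}-Q^*\|_2^2$-proportional parts of the $\epsilon_t^n$ (via \eqref{Boundedness}) and $\zeta_{t+1}^n$ contributions, is exactly what the stated constant $C_G$ bounds, namely $\{1+C/[\kappa(1-CK_\psi^{(1)}\kappa)]+[K_G(\gamma+K_SK_G)]^2\}\|Q_T^{n-1}-Q^*\|_2^2$. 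The one remaining expected risk-estimation error I would control by the SASP duality-gap estimate: Lemma~\ref{Lemma 4.2} gives $\mathbb{E}[|\epsilon_t^n|\,|\,\cdot]\le\gamma f(t)$ and hence the additive $(\gamma f(t))^2$ piece, yielding the final noise bound $\frac{C_G+(\gamma f(t))^2}{n^{2k}}$.

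The step I expect to be the main obstacle is the asynchronous step-size bookkeeping. Making $\varepsilon$ appear in the contraction factor but \emph{not} in the cross or noise terms requires using the visitation lower bound $\varepsilon$ only for the beneficial $-2\theta_k^n$ term while bounding the update probability by $1$ everywhere else, and simultaneously choosing $\kappa_0$ and $\kappa$ consistently so that all quadratic remainders assemble into precisely the stated coefficient. This mirrors the asynchronous-to-synchronous reduction of \cite{Even-Dar2004}, but the interdependence of the two error sources $\xi_t^n$ (from the $Q$-update) and $\epsilon_t^n$ (from risk estimation) forces the constants to be tracked jointly rather than separately.
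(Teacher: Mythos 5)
Your overall architecture matches the paper's proof: expand the squared error from the update \eqref{Update}, eliminate the $\zeta_{t+1}^{n}$ cross term via \eqref{stable}, extract the $2\varepsilon/n^{k}$ contraction from the exploration guarantee, treat the $\xi_{t}^{n}$ cross term with \eqref{Induction}, Lemma \ref{Saddle convergence} and Young's inequality (this is exactly where the four terms $\gamma K_{G}\kappa_{0}+\gamma K_{G}\kappa+\gamma K_{G}/\kappa_{0}+K_{S}K_{G}^{2}/\kappa$ come from), and absorb the quadratic remainder into $C_{G}/n^{2k}$. The genuine gap is in how you allocate the two available bounds on the risk-estimation error $\epsilon_{t}^{n}$: you propose the second-moment bound \eqref{Boundedness} for the cross term and the duality-gap bound of Lemma \ref{Lemma 4.2} for the quadratic term, whereas the paper does precisely the reverse, and the reversal is not cosmetic. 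First, bounding the cross term $-2(Q_{T}^{n-1}-Q^{\ast})^{\top}\theta_{k}^{n}\epsilon_{t}^{n}$ via \eqref{Boundedness} (through Cauchy--Schwarz) produces an extra term of order $\|Q_{T}^{n-1}-Q^{\ast}\|_{2}^{2}/n^{k}$ with a coefficient involving $C/[\kappa(1-C\,K_{\psi}^{(1)}\kappa)]$; this sits at rate $1/n^{k}$, cannot be pushed into the $1/n^{2k}$ noise floor, and does not appear in the stated bracket, so your coefficients do not assemble into the claimed contraction factor. Second, Lemma \ref{Lemma 4.2} only controls the conditional \emph{first} moment, $\mathbb{E}[|\epsilon_{t}^{n}|\,|\,\mathcal{G}_{t+1}^{n-1}]\leq\gamma\,f(t)$; inferring $\mathbb{E}[\|\epsilon_{t}^{n}\|^{2}\,|\,\cdot]\leq(\gamma\,f(t))^{2}$ for the quadratic noise term runs Jensen's inequality in the wrong direction.

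The repair is to swap the two roles. In the cross term, use $\mathbb{E}[\epsilon_{t}^{n}\,|\,\mathcal{G}_{t+1}^{n-1}]\leq\gamma\,f(t)$ --- a first-moment bound suffices there because $Q_{T}^{n-1}-Q^{\ast}$ is $\mathcal{G}_{t+1}^{n-1}$-measurable --- so that Young's inequality $2ab\leq\kappa a^{2}+b^{2}/\kappa$ applied with $b=\gamma f(t)/n^{k}$ deposits the entire $f(t)$-dependence into the $1/n^{2k}$ slot as $(\gamma\,f(t))^{2}/(n^{2k}\kappa)$, which is exactly the additive piece in the statement. In the quadratic term, use \eqref{Boundedness}, whose $\|Q_{T}^{n-1}-Q^{\ast}\|_{2}^{2}$ factor is the middle summand in the definition of $C_{G}$ in \eqref{C_G}. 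With that exchange your bookkeeping of the $\xi_{t}^{n}$ contributions and of the exploration term is the same as the paper's and the stated bound follows.
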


\begin{proof}
Let us recall that the update of Step 3 in Algorithm 1 is equivalent
to
\[
Q_{t}^{n}=Q_{T}^{n-1}-\text{\ensuremath{\theta}}_{k}^{n}\left[Q_{T}^{n-1}-Q^{\ast}+\xi_{t}^{n}+\epsilon_{t}^{n}+Q^{\ast}-H\left(v^{\ast},\,y^{\ast},\,z^{\ast})\right)\right].
\]
Expanding, we have
\begin{align}
\|Q_{t}^{n}-Q^{\ast}\|_{2}^{2}\leq & \|Q_{T}^{n-1}-Q^{\ast}\|_{2}^{2}+\|\text{\ensuremath{\theta}}_{k}^{n}\left[Q_{T}^{n-1}-Q^{\ast}+\epsilon_{t}^{n}+\xi_{t}^{n}\right]\|_{2}^{2}\nonumber \\
 & -2\left(Q_{T}^{n-1}-Q^{\ast}\right)^{\top}\text{\ensuremath{\theta}}_{k}^{n}\left[Q_{T}^{n-1}-Q^{\ast}+\epsilon_{t}^{n}+\xi_{t}^{n}\right].\label{A.2}
\end{align}
We focus on the cross term $-2\left(Q_{T}^{n-1}-Q^{\ast}\right)^{\top}\text{\ensuremath{\theta}}_{k}^{n}\left[Q_{T}^{n-1}-Q^{\ast}+\epsilon_{t}^{n}+\xi_{t}^{n}\right]$.
First, by the $\varepsilon$-greedy exploration policy, notice that
\begin{equation}
\mathbb{E}_{p}\left[\left(Q_{T}^{n-1}-Q^{\ast}\right)^{\top}\text{\ensuremath{\theta}}_{k}^{n}\left(Q_{T}^{n-1}-Q^{\ast}\right)|\mathcal{G}_{t+1}^{n-1}\right]\geq\frac{\varepsilon}{n^{k}}\|Q_{T}^{n-1}-Q^{\ast}\|_{2}^{2},\label{A.3}
\end{equation}
and by the definition of $\epsilon_{t}^{n}$, we have by Lemma \ref{Lemma 4.2},
that
\begin{align*}
\mathbb{E}\left[\epsilon_{t}^{n}|\mathcal{G}_{t+1}^{n-1}\right] & =\mathbb{E}\left[H\left(v^{n-1},\,y^{n,\ast},\,z^{n,\ast}\right)-H\left(v^{n-1},\,y^{n,t},\,z^{n,t}\right)|\mathcal{G}_{t+1}^{n-1}\right]\\
 & \leq\gamma\,\mathbb{E}_{s^{\prime}\sim P(\cdot|s,a)}\left[|G\left(v^{n-1}(s^{\prime}),\,y^{n,t}(s,\,a),\,z^{n,t}(s,\,a)\right)-G\left(v^{n-1}(s^{\prime}),\,y^{n,\ast}(s,\,a),\,z^{n,\ast}(s,\,a)\right)|\right]\\
 & \leq\gamma\,f(t),
\end{align*}
and
\begin{equation}
\mathbb{E}\left[-\left(Q_{T}^{n-1}-Q^{\ast}\right)^{\top}\text{\ensuremath{\theta}}_{k}^{n}\epsilon_{t}^{n}|\mathcal{G}_{t+1}^{n-1}\right]\leq\frac{\gamma\,f(t)}{n^{k}}\|Q_{T}^{n-1}-Q^{\ast}\|_{2}.\label{A.4}
\end{equation}
Applying the algebraic identity $2ab\leq a^{2}\kappa+b^{2}/\kappa$
for all $\kappa>0$, we see that
\[
\mathbb{E}\left[-2\left(Q_{T}^{n-1}-Q^{\ast}\right)^{\top}\text{\ensuremath{\theta}}_{k}^{n}\epsilon_{t}^{n}|\mathcal{G}_{t+1}^{n-1}\right]\leq\kappa\|Q_{T}^{n-1}-Q^{\ast}\|_{2}^{2}+\frac{(\gamma\,f(t))^{2}}{n^{2k}\kappa}.
\]
We can see that
\begin{align*}
\mathbb{E}\left[-\left(Q_{T}^{n-1}-Q^{\ast}\right)^{\top}\text{\ensuremath{\theta}}_{k}^{n}\xi_{t}^{n}|\mathcal{G}_{t+1}^{n-1}\right]\leq & \frac{\gamma^{2}}{n^{k}}K_{G}\biggl[\|Q_{T}^{n-1}-Q^{\ast}\|_{2}\|Q_{T}^{n-1}-Q^{\ast}\|_{2}\\
 & +\gamma\|Q_{T}^{n-1}-Q^{\ast}\|_{2}\left(\|(y^{n,\ast},\,z^{n,\ast})-(y^{\ast},\,z^{\ast})\|_{2}\right)\biggr].
\end{align*}
Again applying the algebraic identity $2ab\leq a^{2}\kappa+b^{2}/\kappa$
for all $\kappa>0$, we see that for any constants $\kappa_{0},\,\kappa>0$,
\begin{alignat}{1}
\mathbb{E}\left[-2\left(Q_{T}^{n-1}-Q^{\ast}\right)^{\top}\text{\ensuremath{\theta}}_{k}^{n}\xi_{t}^{n}|\mathcal{G}_{t+1}^{n-1}\right]\leq & \frac{\gamma}{n^{k}}K_{G}\left[\left(\kappa_{0}+\kappa\right)\|Q_{T}^{n-1}-Q^{\ast}\|_{2}^{2}\right]\nonumber \\
 & +\frac{\gamma K_{G}}{n^{k}\kappa_{0}}\|Q_{T}^{n-1}-Q^{\ast}\|_{2}^{2}+\frac{K_{G}}{n^{k}\kappa}\left(\|(y^{n,\ast},\,z^{n,\ast})-(y^{\ast},\,z^{\ast})\|_{2}^{2}\right)\nonumber \\
\leq & \frac{\gamma}{n^{k}}K_{G}\left[\left(\kappa_{0}+\kappa\right)\|Q_{T}^{n-1}-Q^{\ast}\|_{2}^{2}\right]\nonumber \\
 & +\frac{\gamma K_{G}}{n^{k}\kappa_{0}}\|Q_{T}^{n-1}-Q^{\ast}\|_{2}^{2}+\frac{K_{S}\,K_{G}^{2}}{n^{k}\kappa}\|Q_{T}^{n-1}-Q^{\ast}\|_{2}^{2}.\label{A5}
\end{alignat}
Finally, there exists $\kappa>0$ such that
\begin{align}
 & \|\text{\ensuremath{\theta}}_{k}^{n}\left[Q_{T}^{n-1}-Q^{\ast}+\epsilon_{t}^{n}+\xi_{t}^{n}\right]\|_{2}^{2}\nonumber \\
\leq & \frac{1}{n^{2k}}\left\{ 1+\frac{C}{\kappa(1-C\,K_{\psi}^{(1)}\kappa)}+\left[K_{G}(\gamma+K_{S}K_{G})\right]^{2}\right\} \|Q_{T}^{n-1}-Q^{\ast}\|_{2}^{2},\label{C_G}
\end{align}
where the above inequality holds based on (\ref{Boundedness}) and
(\ref{Induction}). For simplicity, let $C_{G}$ bound the term
\[
\left\{ 1+\frac{C}{\kappa(1-C\,K_{\psi}^{(1)}\kappa)}+\left[K_{G}(\gamma+K_{S}K_{G})\right]^{2}\right\} \|Q_{T}^{n-1}-Q^{\ast}\|_{2}^{2}.
\]
The statement of the lemma follows by taking expectations of inequalities
(\ref{A.2}), (\ref{A.3}), (\ref{A.4}) and (\ref{A5}), and using
the inequality (\ref{C_G}), and combining.
\end{proof}
The convergence rate in expectation with a linear learning rate is
based on the following result from \cite{Chung1954}, which is useful
for analyzing a specific type of sequence that often arises in recursive
algorithms.
\begin{lem}
\label{Lemma 5.17}\cite{Chung1954} Consider a sequence $\{a^{n}\}$.
Suppose for some $b>1$ and every $n\geq1$ that $a^{n}\leq\left(1-\frac{b}{n}\right)a^{n-1}+\frac{c}{n^{2}}.$Then,
if $d\geq\max\left\{ \frac{c}{b-1},\,a^{0}\right\} $, it follows
that $a^{n}\leq\frac{d}{n}$ for $n\geq1$.
\end{lem}

Based on the results of Lemma \ref{Lemma 5.16}, for linear learning
rate $k=1$, we let $a^{n}=\mathbb{E}\left[\|Q_{t}^{n}-Q^{\ast}\|_{2}^{2}|\mathcal{G}_{t+1}^{n}\right]$
for any $t\leq T$, and choose 
\[
d\geq\max\left\{ \frac{(C_{G}+(\gamma\,f(t))^{2})}{(2\varepsilon-\gamma K_{G}\kappa_{0}-\gamma K_{G}\kappa-\gamma K_{G}/\kappa_{0}-K_{S}\,K_{G}^{2}/\kappa)-1},\,C_{\textrm{max}}^{2}|\mathbb{S}||\mathbb{A}|\right\} ,
\]
in Lemma \ref{Lemma 5.17}, where we also have $a^{0}=\mathbb{E}\left[\|Q_{t}^{1}-Q^{\ast}\|_{2}^{2}|\mathcal{G}_{t+1}^{0}\right]\leq C_{\textrm{max}}^{2}|\mathbb{S}||\mathbb{A}|$.
By setting $\kappa=\kappa_{0}=\varepsilon$, given a small positive
constant $\tilde{\varepsilon}>0$, we have the sample complexity 
\[
N=\Omega\left(\frac{\max\left\{ (C_{G}+(\gamma\,f(t))^{2})\varepsilon/\left((2-2\gamma K_{G})\varepsilon^{2}-K_{G}(\gamma-K_{S}K_{G})-\varepsilon\right),\,C_{\max}^{2}|\mathbb{S}||\mathbb{A}|\right\} }{\tilde{\varepsilon}}\right),
\]
such that $\mathbb{E}\left[\|Q_{t}^{N}-Q^{\ast}\|_{2}^{2}|\mathcal{G}_{t+1}^{N-1}\right]\leq\tilde{\varepsilon}$,
for any $t\leq T$. We have thus proved the desired result summarized
in Theorem \ref{Theorem 4.7}.

\bibliographystyle{plain}
\bibliography{Reference_Q-learning}

\end{document}